\documentclass[10pt,a4paper,reqno,oneside]{amsart}

\usepackage{amsrefs}

\usepackage{array}
\usepackage{url}
\usepackage{multicol}
\usepackage[linktocpage = true]{hyperref}

\usepackage{amsmath,amssymb,amsthm} 
\usepackage[capitalize,noabbrev]{cleveref}

\hypersetup{
 colorlinks = true,
 citecolor = blue,
}
\usepackage{color}
\definecolor{red}{rgb}{1,0,0}
\definecolor{green}{rgb}{0,1,0}
\definecolor{blue}{rgb}{0,0,1}
\definecolor{refkey}{gray}{.625}
\definecolor{labelkey}{gray}{.625}
\usepackage[a4paper]{geometry}
\usepackage{amssymb}
\usepackage{amsmath}
\usepackage{amsthm} 
\usepackage{enumerate} 
\usepackage{graphicx} 
\usepackage{todonotes}

\usepackage{tikz-cd}
\allowdisplaybreaks

\makeatletter

\newtheorem{thmletter}{Theorem}

\theoremstyle{plain}
\newtheorem{thm}{\protect\theoremname}[section]
\newtheorem{prop}[thm]{\protect\propositionname}
\newtheorem{cor}[thm]{\protect\corollaryname}
\newtheorem{lem}[thm]{\protect\lemmaname}

\theoremstyle{definition}

\newtheorem{defn}[thm]{\protect\definitionname}
\newtheorem{remark}[thm]{\protect\remarkname}
\newtheorem{notation}[thm]{\protect\notationname}

\AtBeginDocument{
	
}

\makeatother

  \providecommand{\corollaryname}{Corollary}
  \providecommand{\examplename}{Example}
  \providecommand{\lemmaname}{Lemma}
  \providecommand{\propositionname}{Proposition}
  \providecommand{\theoremname}{Theorem}
  \providecommand{\definitionname}{Definition}
  \providecommand{\remarkname}{Remark}
  \providecommand{\notationname}{Notation}

  \makeatletter
  \newcommand{\be}{%
  \begingroup
  \eqnarray%
   \@ifstar{\nonumber}{}%
  }

\newcommand{\s}{\mathbf{s}_{\phi}}

\newcommand{\A}{\mathbf{A}}

\newcommand{\uk}{\mathbf{u}}
\newcommand{\muk}{\mathbf{w}}
\newcommand{\vk}{\mathbf{v}}

\newcommand{\KK}{\mathbb{K}}

\newcommand{\ord}{\operatorname{ord}}

\newcommand{\Sgn}{\operatorname{Sgn}}

\newcommand{\Res}{\operatorname{Res}}

\makeatother

\begin{document}

\title{Rank-two Drinfeld module over Elliptic curves}
%
\author{Chuangqiang Hu}
\address{Sun Yat-Sen University, School of Mathematics, Guangzhou, China}
\email{\href{huchq5@mail.sysu.edu.cn}{huchq5@mail.sysu.edu.cn}}

\author{Ze-Dong Huang}
\address{Sun Yat-Sen University, School of Mathematics, Guangzhou, China}
\email{\href{huangzd6@mail2.sysu.edu.cn}{huangzd6@mail2.sysu.edu.cn}}

\author{Chang-An Zhao\(\dagger\)}
\address{Sun Yat-Sen University, School of Mathematics, Guangzhou, China}
\email{\href{zhaochan3@mail.sysu.edu.cn}{zhaochan3@mail.sysu.edu.cn}}
\thanks{\(\dagger\)Corresponding author. } 

\allowdisplaybreaks

\begin{abstract}
Drinfeld modules, introduced by D.~V.~Drinfeld in the 1970s, were originally developed as a function field analogue of elliptic curves and have since become a central tool in the Langlands program over function fields. 
The theory has been highly developed and shares deep connections with many areas, including algebraic geometry, number theory, and coding theory.  Despite these advances, the explicit construction of Drinfeld modules over non-polynomial coordinate rings remains a largely open problem. Indeed, aside from the classical polynomial case $\mathbb{F}_q[t]$, explicit formulas for Drinfeld modules are known only in very limited situations. 
Let \(E\) be an elliptic curve over a finite field \(\mathbb{F}_q\) with a fixed rational point \(\infty\), and let \(\A = H^0(E\setminus\{\infty\}, \mathcal{O}_E)\) be its coordinate ring. 
Rank-one Drinfeld \(\A\)-modules were explicitly constructed by Green and Papanikolas, providing the first systematic example beyond the polynomial case. However, the rank-two case has remained completely inaccessible until now.

This paper solves the rank-two case of this open problem in a fully explicit manner. 
Precisely, we develop an explicit theory for rank-two sign-normalized \(\A\)-Drinfeld modules over an algebraically closed \(\A\)-field \(L\), building upon the rank-one framework. 
We determine the structure of the associated Anderson motive \(M_\phi\) and prove that it is generated by three elements subject to a single quadratic \(\tau\)-relation, which we derive in closed form using the geometric parameters of the underlying elliptic curve.

As a consequence, we find that the moduli space of sign-normalized rank-two Drinfeld \(\A\)-modules is an open domain $Y\neq 0 $ inside a supersingular curve \(Y^{q+1} = \pi (X) \), where \(\pi \) is an explicit polynomial of degree \(2q+1\).
Finally, we obtain closed-form expressions for all coefficients of a rank-two Drinfeld module over \(\A\) in terms of the geometric parameters \(\kappa\) and \(\lambda_1\), thereby providing a complete explicit description of these objects. 
The methods developed here suggest new avenues for approaching higher-rank cases, which remain wide open.
\end{abstract}
\maketitle{}

\textbf{Key words:} ~Drinfeld module; Anderson motive; motive relation; elliptic curve;  
%
%

\tableofcontents

\section{Introduction}

\subsection{Drinfeld modules over general coordinate rings}

Drinfeld modules, introduced by Drinfeld~\cite{Drinfeld74}, are function field analogues of elliptic curves and form a central part of modern arithmetic in positive characteristic. They have since become a central tool in the Langlands program over function fields and share deep connections with algebraic geometry, number theory, and coding theory.

When the base ring is a polynomial ring \(\mathbb{F}_q[T]\), rank-\(r\) Drinfeld modules admit a simple generic form, and the rank-one case gives the well-known Carlitz module~\cite{Carlitz38}. The Carlitz module has been studied extensively, and its associated special functions—such as the Anderson–Thakur function and the Carlitz period—play a fundamental role in the theory of Goss \(L\)-series, zeta values, and Anderson generating functions~\cite{AndersonThakur90}. Pellarin~\cite{Pellarin12} introduced a new class of \(L\)-series for \(\mathbb{F}_q[t]\) and proved remarkable special value formulas, which have inspired extensive subsequent work on multivariable generalizations and interpolation formulas~\cite{AnglesPellarin14, AnglesPellarin15, Perkins14, Perkins16}.

For more general Dedekind domains, however, explicit constructions are much more subtle and have been carried out only in low ranks. For rank-one modules over the coordinate ring of an elliptic or hyperelliptic curve, explicit descriptions were obtained by Dummit and Hayes~\cite{DummitHayes94} and by Bae and Kang~\cite{BaeKang95}. A comprehensive treatment of sign-normalized rank-one Drinfeld modules on elliptic curves, parallel to the Carlitz module, was given by Green and Papanikolas~\cite{GreenPapanikolas18}. In their work, they developed a complete theory of the Anderson-Thakur function and the associated period using the shtuka function of the elliptic curve. They further established Pellarin \(L\)-series identities and proved a log-algebraicity theorem of Anderson in this setting.

In a related direction, Hu and Huang~\cite{HuHuang24} studied Drinfeld modules over a specific Dedekind domain corresponding to the projective line with a degree-two infinite place. They constructed two standard rank-one Hayes modules corresponding to the two ideal classes, computed their period lattices with explicit infinite product formulas, and obtained the complete family of rank-two normalized Drinfeld modules parameterized by an invariant analogous to the \(j\)-invariant of elliptic curves. They also developed an explicit theory of Weil pairings for Drinfeld modules of arbitrary rank, introducing the notion of Weil operators as symmetric polynomials.

More generally, building upon the framework of Anderson and Thakur, Hu, Huang, and Yau~\cite{HuHuangYau24} developed a comprehensive theory for rank-one Drinfeld modules over the Dedekind domain of a projective line with a higher-degree infinite place. They constructed the standard rank-one Drinfeld module using a shtuka function, derived explicit formulas for its exponential and logarithm functions, computed the Carlitz period as an infinite product, and described all Hayes modules over the narrow class field. They also generalized the Anderson–Thakur function and introduced Pellarin-type, Anderson-type, and logarithmic generating functions over this general domain, thereby extending the classical Carlitz module theory to a much broader class of coordinate rings.

Despite these advances, the rank-two case over general elliptic curve coordinate rings—as opposed to the rational function field or truncated projective line settings—has remained a largely open problem. The present paper is devoted to the \emph{rank-two} case over the coordinate ring of an elliptic curve. Our main goal is to obtain similarly explicit formulas and to understand the structure of the underlying Anderson motives, thereby extending the framework of Green and Papanikolas to higher rank and complementing the work of Hu and Huang and of Anderson and Thakur in the general elliptic curve setting.

\subsection{Main results}

Let \(E\) be an elliptic curve over \(\mathbb{F}_q\) given by a Weierstrass equation
\[
E: y^2 + a_1xy + a_3y = x^3 + a_2x^2 + a_4x + a_6,
\]
let \(\A = \mathbb{F}_q[x,y]\) be its coordinate ring, and let \(L\) be an algebraically closed \(\A\)-field. A sign-normalized rank-two \(\A\)-Drinfeld module is described by two equations
\begin{align*}
\phi_x &= \theta + g_1\tau + g_2\tau^2 + g_3\tau^3 + \tau^4,\\
\phi_y &= \eta + h_1\tau + h_2\tau^2 + h_3\tau^3 + h_4\tau^4 + h_5\tau^5 + \tau^6,
\end{align*}
where \((\theta,\eta)\) is an \(L\)-point of $ E $ that provides the structure homomorphism \(\iota:\A\to L\).

The first main result (Section~3) is the \emph{motive relation}. We show that the Anderson motive \(M_\phi = L\{\tau\}\s\) is generated by \(\s,\tau\s,\tau^2\s\) and that these three elements satisfy a single quadratic relation
\[
C_2\,\tau^2\s = C_1\,\tau\s + C_0\,\s,
\]
where \(C_0,C_1,C_2\) are explicit polynomials in the affine coordinates \(x,y\) of the elliptic curve. Their precise form is obtained by a combination of Euclidean division in the twisted polynomial ring and a detailed study of the divisor geometry, paralleling the techniques used by Green and Papanikolas for the rank-one case.

The second main result (Section~4) describes the \emph{admissible basis} of the motive. 
\begin{thmletter}
There exist elements \(E_1,E_2 \in M_\phi\) such that
\[
M_\phi = \A_L E_1 \oplus V^{-1}\A_L E_2,
\]
where \(V\) is a point on the elliptic curve and \(V^{-1}\A_L\) is the fraction ideal.
\end{thmletter}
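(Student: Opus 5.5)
The plan is to realize $M_\phi$ as a rank-two projective $\A_L=\A\otimes_{\mathbb{F}_q}L$-module via the motive relation of Section~3, and then split it by an explicit filtration. First I will check that $M_\phi=L\{\tau\}\s$ is locally free of rank two over $\A_L$. The $\A_L$-action is $a\cdot m=m\phi_a$, and $\deg_\tau\phi_x=4$, $\deg_\tau\phi_y=6$; so for any $m$ of $\tau$-degree $d\ge 2$ one can use $\phi_x$ and $\phi_y$ (possibly twisted) to lower the degree. Thus $\s$ and $\tau\s$ generate $M_\phi$ over $\A_L$, using that every integer $d\ge 2$ lies in the semigroup $\{2i+3j\}$ after halving the degrees $4$ and $6$. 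For this reason I take $E_1:=\s$ as the first basis element. The submodule $\A_L\s$ is free: it is torsion-free because $\tau$-degrees add, so $\deg_\tau(a\cdot\s)=2\deg(a)$.

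Next I pass to the quotient $Q:=M_\phi/\A_L\s$. This is a rank-one projective $\A_L$-module generated by the image of $\tau\s$, so $Q\cong I$ for some invertible ideal $I$ of $\A_L$. To identify $I$, I will use the relation $C_2\,\tau^2\s=C_1\,\tau\s+C_0\,\s$. Applying $\tau$ to $\s$ and reducing modulo $\A_L\s$ shows that the annihilator-type data of $\overline{\tau\s}$ are controlled by the divisors of $C_0,C_1,C_2$. These I plan to compute from the divisor geometry of Section~3, in the style of Green–Papanikolas' shtuka function, with zeros at $\Xi=(\theta,\eta)$, at a point $V$, and at Frobenius twists of $V$. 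Concretely, $\overline{\tau\s}$ should generate a module isomorphic to the fractional ideal $V^{-1}\A_L$ of functions with at most a simple pole at $V$. I will then take $E_2$ to be a lift of the element corresponding to $1\in V^{-1}\A_L$, namely a suitable combination $\tau\s-f\s$ with $f$ having a pole at $V$, normalized so that $\A_L$-multiples of $E_2$ land in $M_\phi$ exactly when the multiplier lies in $V^{-1}\A_L$.

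Finally, because $V^{-1}\A_L$ is projective, the exact sequence $0\to\A_L E_1\to M_\phi\to V^{-1}\A_L\to0$ splits. The section given by $E_2$ then yields $M_\phi=\A_L E_1\oplus V^{-1}\A_L E_2$. Directness of the sum follows from a degree count: a relation $a E_1=bE_2$ would force a parity clash in $\tau$-degrees, even on the left and odd on the right.

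I expect the main obstacle to be the middle step: pinning down exactly which fractional ideal arises, that is, showing that the point $V$ read off from $\operatorname{div}(C_0),\operatorname{div}(C_2)$ gives $V^{-1}\A_L$ and not some other ideal class. I would attack this by computing the class of $\det M_\phi$, which should be trivial or determined by $\Xi$ via the $\tau$-action. I would then compare it with $\operatorname{div}(C_0)-\operatorname{div}(C_2)^{(1)}$, and use Riemann–Roch on $E_L$ to produce the function $f$ with a single pole at $V$.
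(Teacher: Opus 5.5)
\textbf{There is a genuine gap: your first step is false, and the rest of the plan depends on it.}

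For $a\in\mathbf{A}_L$ with pole order $d$ at $\infty$ you have $\deg_\tau(a\cdot\mathbf{s}_\phi)=2d$ and $\deg_\tau(a\cdot\tau\mathbf{s}_\phi)=2d+1$. The two parities cannot cancel, so every element of $\mathbf{A}_L\mathbf{s}_\phi+\mathbf{A}_L\tau\mathbf{s}_\phi$ has $\tau$-degree in $\{0,1,4,5,6,7,\dots\}$. The Weierstrass gap ($\mathcal{L}(\infty)=L$) means $d=1$ never occurs, so $\tau^2\mathbf{s}_\phi$ and $\tau^3\mathbf{s}_\phi$ are not in this span. Your semigroup remark is exactly where this slips: reducing $\tau$-degree $d$ needs $\lfloor d/2\rfloor\in\{0,2,3,\dots\}$, and this fails for $d=2,3$.

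The claim also contradicts the statement you want to prove. If $\overline{\tau\mathbf{s}_\phi}$ generated $Q=M_\phi/\mathbf{A}_L\mathbf{s}_\phi$, then $Q$ would be cyclic and torsion-free, hence free, and $M_\phi\cong\mathbf{A}_L^2$. But $\mathbf{A}_L\oplus V^{-1}\mathbf{A}_L$ is free only if $V^{-1}\mathbf{A}_L$ is principal, i.e.\ only if $V-\infty$ is a principal divisor, which is false since $V\neq\infty$.

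The same problem rules out your ansatz for $E_2$.
\begin{itemize}
\item A decomposition $M_\phi=\mathbf{A}_LE_1\oplus V^{-1}\mathbf{A}_LE_2$ forces $\overline{E_2}$ to correspond to a constant under any isomorphism $Q\cong V^{-1}\mathbf{A}_L$.
\item But $\overline{\tau\mathbf{s}_\phi}$ corresponds to the nonconstant function $1/(\lambda_1h)=(f_\alpha+m-\kappa)/\lambda_1$. So $\tau\mathbf{s}_\phi-f\mathbf{s}_\phi$ fails for $f\in\mathbf{A}_L$.
\item If $f$ has a pole at $V$, then $\tau\mathbf{s}_\phi-f\mathbf{s}_\phi$ is not even in $M_\phi$. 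The coefficients of elements of $M_\phi$ can only have poles at $\dot V,\ddot V$ (Corollary~\ref{cor:inMphi}), and $V\neq\dot V,\ddot V$.
\end{itemize}
The correct $E_2$ has $\tau\mathbf{s}_\phi$-coefficient $\lambda_1h$, which is not constant. It has even $\tau$-degree $2$, so your parity argument for directness does not apply either. Directness instead follows from the $\mathbf{K}_L$-independence of $\mathbf{s}_\phi$ and $\tau\mathbf{s}_\phi$.

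What is missing is the third generator and the motive relation that controls it. The paper first shows $M_\phi=\mathbf{A}_L\mathbf{s}_\phi+\mathbf{A}_L\tau\mathbf{s}_\phi+\mathbf{A}_L\tau^2\mathbf{s}_\phi$ (Corollary~\ref{cor:spaceMphi}). This uses $\phi_y-\tau^2\phi_x$ to reach $\tau^3\mathbf{s}_\phi$ and needs $P_3\neq 0$. From the wedge square $M_\phi\wedge M_\phi=V^{-1}\mathbf{A}_LS_0$ it then derives $C_1=\lambda_2C_2+\lambda_1C_1'$, $C_0=(x+\gamma_1)C_1'+\gamma_2C_2$ and $C_1'/C_2=h$. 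These are governed by the poles $\dot V,\ddot V$ of $h$, not by $V$ and its twists. With them the relation $C_2\tau^2\mathbf{s}_\phi=C_1\tau\mathbf{s}_\phi+C_0\mathbf{s}_\phi$ becomes
\[
\tau^2\mathbf{s}_\phi-\lambda_2\tau\mathbf{s}_\phi-\gamma_2\mathbf{s}_\phi=h\bigl(\lambda_1\tau\mathbf{s}_\phi+(x+\gamma_1)\mathbf{s}_\phi\bigr)=:E_2 .
\]
Then $\tau\mathbf{s}_\phi=\frac{1}{\lambda_1h}E_2-\frac{x+\gamma_1}{\lambda_1}E_1$ and $\tau^2\mathbf{s}_\phi\equiv\bigl(1+\frac{\lambda_2}{\lambda_1h}\bigr)E_2$ modulo $\mathbf{A}_LE_1$, with $1/h=f_\alpha+m-\kappa$. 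So the images of $\tau\mathbf{s}_\phi$ and $\tau^2\mathbf{s}_\phi$ together generate $(1,f_\alpha)_{\mathbf{A}_L}\overline{E_2}=V^{-1}\mathbf{A}_L\overline{E_2}$. Together with $f_\alpha E_2\in M_\phi$ (Corollary~\ref{cor:E2tau}), this gives the decomposition.

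Your fallback via $\det M_\phi$ is essentially the right mechanism and could give existence abstractly: if $\mathbf{A}_L\mathbf{s}_\phi$ is saturated, then $Q\cong\det M_\phi\cong V^{-1}\mathbf{A}_L$. To use it you must drop the cyclicity claim and prove the saturation independently. You must also accept that the determinant class is that of $V$, not trivial.
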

In this basis the Frobenius action \(\tau\) is given by a completely explicit \(2\times2\) matrix whose entries are expressed through the geometric parameter \(\kappa, \lambda_1 \):
\begin{thmletter}
    The \(\tau\)-action on \(M_\phi\) can be explicitly rewritten as 
\[
\tau \begin{pmatrix} E_1 \\ E_2 \end{pmatrix}
=
\frac{1}{\lambda_1}\begin{pmatrix}
- (x+ \gamma_1)& \frac{1}{h} \\
-  C_1^{\prime (1)}   & \frac{1}{  (x+\gamma_1)} \left( \lambda_1^{q+1} f_\alpha +   \frac{C_1^{\prime{(1)}} }{ h} \right)
\end{pmatrix}
\begin{pmatrix} E_1 \\ E_2 \end{pmatrix},
\]
where \(\gamma_1,\ C_1^{\prime (1)},\ h,\ f_\alpha \) are explicit quantities depending on \(\kappa, \lambda_1 \).
\end{thmletter}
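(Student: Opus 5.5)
The plan is to derive the matrix directly from the admissible basis of the previous theorem together with the motive relation $C_2\,\tau^2\s = C_1\,\tau\s + C_0\,\s$ from Section~3. First I would write $E_1$ and $E_2$ explicitly as $L[x,y]$-combinations (possibly with a pole at $V$ for $E_2$) of $\s$ and $\tau\s$. Since $M_\phi$ is generated over $\A_L$ by $\s,\tau\s$ modulo the relation, this is possible. The natural guess, forced by the first row of the claimed matrix, is
\[
E_1 = \s \quad\text{and}\quad E_2 = h\bigl(\lambda_1\tau\s + (x+\gamma_1)\s\bigr),
\]
normalized so that $\lambda_1 \tau E_1 = -(x+\gamma_1)E_1 + h^{-1}E_2$. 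I would then check that $E_2$ lies in $V^{-1}\A_L\cdot(\text{lattice})$, i.e.\ that $h$ has the divisor making this consistent with $M_\phi=\A_L E_1\oplus V^{-1}\A_L E_2$. Here $h$ is the function with divisor supported at $V$ and $\infty$, and $\gamma_1,\lambda_1$ come from the rank-one shtuka data.

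The first row is then immediate by construction. For the second row, I would compute $\tau E_2 = h^{(1)}\bigl(\lambda_1^q\tau^2\s + (x+\gamma_1^q)\tau\s\bigr)$ and substitute $\tau^2\s = C_2^{-1}(C_1\tau\s + C_0\s)$. Next I would re-express $\tau\s$ in terms of $E_1,E_2$ via $\tau\s = \lambda_1^{-1}\bigl(h^{-1}E_2 - (x+\gamma_1)E_1\bigr)$. This yields $\tau E_2 = aE_1+bE_2$ with $a,b\in L(E)$. To match the claimed form, I will have to use the explicit factorization of $C_0,C_1,C_2$ from Section~3. In particular I expect $C_2$ to be (up to a scalar $\lambda_1^{q+1}$) a product of $h^{(1)}$-type factors, so that the quotient $h^{(1)}/C_2$ simplifies. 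The entry $a$ should then reduce to $-\lambda_1^{-1}C_1^{\prime(1)}$, where $C_1'$ is the reduced form of $C_1$. The entry $b$ should become $\lambda_1^{-1}(x+\gamma_1)^{-1}\bigl(\lambda_1^{q+1}f_\alpha + C_1^{\prime(1)}/h\bigr)$.

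Establishing $b$ uses the fact that the determinant of the $\tau$-matrix has a prescribed divisor. For a rank-two sign-normalized module it should equal a scalar multiple of the shtuka-type function $f_\alpha$ times the appropriate ratio of $V$ and $V^{(1)}$ factors. Given $a$ and the first row, this identity determines $b$: from $\det = \lambda_1^{-2}\bigl(-(x+\gamma_1)b'+ C_1^{\prime(1)}/h\bigr)$ with $b=\lambda_1^{-1}b'$ one solves for $b'$, which produces exactly the displayed entry. I would verify the determinant by comparing with $C_0/C_2$ transported through the base change, since $\det$ of $\tau$ on the basis $(\s,\tau\s)$ is $-C_0/C_2$.

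The main obstacle is the bookkeeping of divisors, i.e.\ showing that the rational functions obtained really collapse to the stated expressions in terms of $\kappa,\lambda_1$. Concretely, this means the identity relating $C_0/C_2$, $h$, $h^{(1)}$ and $f_\alpha$. I would attack it by comparing divisors on $E$: both sides have zeros and poles only at $\infty$, $V$, $V^{(1)}$, $\Xi$ and the points cut out by $x+\gamma_1$. Once the divisors agree, I would fix the leading constant using sign-normalization at $\infty$, i.e.\ by comparing leading coefficients in $x$.
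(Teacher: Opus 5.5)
Your first row, and your determinant route to the $(2,2)$-entry, agree with the paper. There $T_2$ comes from the consistency relation $\tau(E_1\wedge E_2)=-\lambda_1^{q-1}f_\alpha\,E_1\wedge E_2$, using $E_1\wedge E_2=\lambda_1S_0$ and $\tau S_0=-f_\alpha S_0$. This is exactly your base-change identity $\det=\frac{\lambda_1^q h^{(1)}}{\lambda_1 h}\cdot\bigl(-\frac{C_0}{C_2}\bigr)$ combined with $C_0/C_2=f_\alpha h/h^{(1)}$.

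The gap is the $(2,1)$-entry. Carry out your substitution with $C_1=\lambda_2C_2+\lambda_1C_1'$ and $C_0=(x+\gamma_1)C_1'+\gamma_2C_2$. You get
\[
\tau E_2=\frac{1}{\lambda_1}\bigl(T_1E_1+T_2E_2\bigr),\qquad T_1=-h^{(1)}\Bigl[(x+\lambda_1^q\lambda_2+\gamma_1^q)(x+\gamma_1)-\lambda_1^{q+1}\gamma_2\Bigr].
\]
At this stage $\lambda_1,\lambda_2$ are still undetermined constants; Corollary~\ref{cor:C0C1C2} leaves them free. No factorization of $C_0,C_1,C_2$ turns the bracket into $C_2^{(1)}=x^2-\chi_1^qx+\chi_2^q$. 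That identity is equivalent to the relations (A1), (A2), i.e.\ to the moduli constraints (C1), (C2), and these carry genuinely new information.

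Your heuristic that $C_2$ is, up to $\lambda_1^{q+1}$, a product of $h^{(1)}$-type factors is also wrong. In fact $C_2=(x-\dot\alpha)(x-\ddot\alpha)$ is monic, and the simplification that actually occurs is $h^{(1)}C_2^{(1)}=C_1'^{(1)}$.

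The missing idea is the stability argument of Lemma~\ref{lem:stable}:
\begin{enumerate}
\item $M_\phi$ is $\tau$-stable and $M_\phi=\A_LE_1\oplus V^{-1}\A_LE_2$, with $E_1,E_2$ linearly independent over $\mathbf K_L$. Hence the coefficient $T_1$ must lie in $\A_L$.
\item Since $(h^{(1)})=V^{(1)}+\infty-\dot V^{(1)}-\ddot V^{(1)}$, the monic quadratic $\Gamma_1=-T_1/h^{(1)}\in L[x]$ must vanish at $\dot V^{(1)}$ and $\ddot V^{(1)}$.
\item Therefore $\Gamma_1=C_2^{(1)}$, and $T_1=-h^{(1)}C_2^{(1)}=-C_1'^{(1)}$.
\end{enumerate}
With this step inserted, your determinant computation does produce the stated $(2,2)$-entry.

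Separately, fix your divisor bookkeeping. We have $(h)=V+\infty-\dot V-\ddot V$, so $h$ has poles at $\dot V,\ddot V$ and is not supported only at $V,\infty$. Also $(C_0/C_2)=\xi-\infty-\dot V-\ddot V+\dot V^{(1)}+\ddot V^{(1)}$: the points $V,V^{(1)}$ cancel, and the zeros of $x+\gamma_1$ play no role.
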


This result is derived by resolving the \emph{stability conditions} under the \(\tau\)-action. We prove that the condition for the basis to be well-behaved under \(\tau\) is equivalent to two algebraic identities \eqref{eq:con_C1}\eqref{eq:con_C2} involving \(\kappa,\lambda_1\) and the elliptic curve parameters. 

These identities also determine the moduli space of such Drinfeld modules.  
\begin{thmletter}
The moduli space of sign-normalized rank-two Drinfeld \(\A\)-modules is an open domain \( Y \neq 0 \) inside a smooth supersingular curve \(Y^{q+1} = \pi(x)\), where \(\pi \) is a polynomial of degree \(2q+1\):
\begin{align*}
  \pi(X) &= - X^{2q+1} + m X^{2q}  + X^{q+2} -(2m+a_1)X^{q+1}  + \big(\alpha + 2\alpha^q\big)X^q \\
&\quad + (m+a_1)X^2 + \big(-2\alpha - \alpha^q\big)X   + \big(\beta^\vee - (\beta^\vee)^q\big).
\end{align*}
\end{thmletter}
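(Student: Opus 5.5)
The plan is to parametrize sign-normalized rank-two Drinfeld \(\A\)-modules by the data entering the motive relation \(C_2\tau^2\s = C_1\tau\s + C_0\s\) from Section~3. Then we show that the two stability conditions \eqref{eq:con_C1} and \eqref{eq:con_C2} of Section~4 cut out exactly the stated curve.

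\textbf{Step 1: from a module to \((\kappa,\lambda_1)\).} Given \(\phi\), the motive \(M_\phi\) is generated by \(\s,\tau\s,\tau^2\s\) subject to the single quadratic relation. The divisors of the coefficient functions \(C_0,C_1,C_2\) on \(E_L\) are determined by two geometric parameters, \(\kappa\) and \(\lambda_1\). Here \(\kappa\) is essentially the \(x\)-coordinate of the auxiliary point \(V\) appearing in the admissible basis \(\A_L E_1\oplus V^{-1}\A_L E_2\), and \(\lambda_1\) is the scaling constant in the \(\tau\)-matrix of Theorem~B. Conversely, Theorem~B together with the explicit formulas for \(E_1,E_2\) reconstructs \(\phi_x\) and \(\phi_y\), i.e.\ all of \(g_i,h_j\), from \((\kappa,\lambda_1)\). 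Sign-normalization removes the residual \(\mathbb{F}_q^\times\)-type ambiguity, since the leading coefficients are fixed to \(1\). This yields a bijection between isomorphism classes of modules and pairs \((\kappa,\lambda_1)\) satisfying \eqref{eq:con_C1} and \eqref{eq:con_C2}, with \(\lambda_1\neq 0\); the condition \(\lambda_1\neq0\) is forced by the factor \(1/\lambda_1\) in the \(\tau\)-matrix.

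\textbf{Step 2: eliminate to reach the curve.} Condition \eqref{eq:con_C1} should be linear in a suitable auxiliary quantity, such as \(\gamma_1\) or \(h\). I would use it to express that quantity as a rational function of \(\kappa\), and then substitute into \eqref{eq:con_C2}. Using the Weierstrass equation to reduce powers of the \(y\)-coordinate of \(V\), the result should take the form \(\lambda_1^{q+1}=\pi(\kappa)\) up to a unit normalization. The constants \(m,\alpha,\beta^\vee\) are the shtuka-function parameters of Green–Papanikolas. Setting \(X=\kappa\) and \(Y=\lambda_1\) (possibly after a Frobenius twist or sign change) then gives \(Y^{q+1}=\pi(X)\) with \(Y\neq0\).

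\textbf{Step 3: geometry of the curve.} Smoothness follows once \(\pi\) is separable: since \(q+1\) is prime to \(p\), the curve \(Y^{q+1}=\pi(X)\) is smooth in the affine part if and only if \(\pi\) has no repeated roots. I would check this by noting \(\pi'(X)=X^{q+1}\cdot(\cdots)+2(m+a_1)X-2\alpha-\alpha^q\) modulo \(p\), and then computing \(\gcd(\pi,\pi')\). Alternatively, I would recognize \(\pi\) as \(X^q\)-linearized data plus a low-degree part and apply the standard criterion for curves of type \(Y^{q+1}=f(X)\). Supersingularity would follow from the Hermitian-type shape: the \(\mathbb{F}_{q^2}\)-Frobenius acts on the Jacobian via the \(\mu_{q+1}\)-action, or equivalently the Hasse–Witt matrix vanishes because the coefficients of \(\pi^{(p-1)\cdot}\) in the relevant degrees vanish.

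\textbf{Main obstacle.} The hardest step is the elimination in Step~2. We must show that \(\gamma_1\), \(C_1'^{(1)}\), \(h\) and \(f_\alpha\) collapse, modulo the curve equation, to exactly the degree-\(2q+1\) polynomial \(\pi\). For this I would first track only the leading terms in \(\kappa\) to confirm the degree \(2q+1\) and the leading coefficient \(-1\). I would then match the lower coefficients against Green–Papanikolas identities for \(m\), \(\alpha\) and \(\beta^\vee\).
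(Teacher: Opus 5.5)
Your overall route (stability conditions $\Rightarrow$ curve) matches the paper's. However, Steps~1--2 misidentify the parameters, and the elimination you describe cannot produce the curve.

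In the paper $V=(\alpha,\beta)$ does not depend on $\phi$: it is the point with $V\dotplus(V^{(1)})^\vee=\xi$ fixed by the rank-one theory. So $\kappa$ is not its $x$-coordinate (that is $\alpha$). The module-dependent geometry comes from the wedge square. $M_\phi\wedge M_\phi$ is the rank-one motive $V^{-1}\A_L\,S_0$ with $S_0=h\,\s\wedge\tau\s$, which forces $(h)=V+\infty-\dot V-\ddot V$ with $\dot V\dotplus\ddot V=V$. Then $\kappa$ is the slope of the line through $V^\vee,\dot V,\ddot V$, and $1/h=f_\alpha+m-\kappa$.

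The step your plan never mentions is Corollary~\ref{cor:C0C1C2}(3). There one compares $C_0/C_2=(x+\gamma_1)h+\gamma_2$ with $f_\alpha h/h^{(1)}$ and uses the rank-one identity $x=f_\alpha f_\alpha^{(1)}+(m+m^q+a_1)f_\alpha+\theta$. This pins down $\gamma_2=-\kappa^q-m-a_1$ and $\gamma_1=(\kappa^q+m+a_1)(m-\kappa)-\theta$ as explicit polynomials in $\kappa$.

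With that, \eqref{eq:con_C1} already is the curve: $\lambda_1^{q+1}=\uk\gamma_1-\vk=:\pi(\kappa)$. Meanwhile \eqref{eq:con_C2} imposes nothing on $(\kappa,\lambda_1)$; it just defines $\lambda_2=\lambda_1^{-q}(-\chi_1^q-\gamma_1^q-\gamma_1)$. If you treat $\gamma_1$ as an unknown, solve \eqref{eq:con_C1} for it and substitute into \eqref{eq:con_C2}, you get one relation among $\kappa,\lambda_1,\lambda_2$ and never $\lambda_1^{q+1}=\pi(\kappa)$. No reduction of powers of $\beta$ occurs: expanding $\uk\gamma_1-\vk$ needs only $\alpha+\alpha^q+\theta=m^2+a_1m-a_2$ from Lemma~\ref{lem:m2}. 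Doing so gives $X^q$- and $X$-coefficients $\alpha+2\alpha^q+a_2$ and $-(2\alpha+\alpha^q+a_2)$. So the displayed $\pi$ agrees with $\uk\gamma_1-\vk$ only when $a_2=0$.

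Several other steps also need repair.
\begin{enumerate}
\item \emph{$\lambda_1\neq0$.} The factor $1/\lambda_1$ in the $\tau$-matrix arises from dividing by $\lambda_1$, so it cannot prove $\lambda_1\neq0$. The paper uses $\lambda_1=-P_3$ (compare the $y$-coefficients of $C_1$) together with Lemma~\ref{lem:P3}.
\item \emph{What the points parametrize.} Points $(\kappa,\lambda_1)$ correspond to sign-normalized modules, not to isomorphism classes. By Proposition~\ref{prop:j} the class depends only on $\kappa$, and the residual isomorphisms $\ell\in\mathbb F_{q^2}^\times$ act by $\lambda_1\mapsto\ell^{q-1}\lambda_1$.
\item \emph{Smoothness.} ``Compute $\gcd(\pi,\pi')$'' is not a method over $\mathbb F_q(\theta,\eta,\alpha)$. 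What does work: $\pi'=-\gamma_1+\uk\gamma_2-\alpha=-(2\gamma_1+\chi_1^q)$, because $\uk\gamma_2+\chi_1^q+\gamma_1=\alpha$ (proof of Lemma~\ref{lem:C1C2A2}). At a root of $\pi$, Lemma~\ref{lem:chi2qg2g1} gives $C_2^{(1)}=(x+\gamma_1)(x-\chi_1^q-\gamma_1)$. Hence the multiple roots of $\pi$ are exactly its roots with $\dot V=\ddot V$, and these must then be excluded. Do not copy the paper's next step: Lemma~\ref{lem:lambda1eq0} takes $\lambda_1^q\lambda_2=0$ at $\lambda_1=0$, whereas \eqref{eq:con_C2} makes it $-\chi_1^q-\gamma_1^q-\gamma_1$. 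So the deduction $\gamma_1\in\mathbb F_q$ there is unjustified.
\item \emph{Supersingularity.} Drop this claim: your argument is not valid and the claim fails in general. For $q=2$ the $X^3$-coefficient of $\pi$ is $-(2m+a_1)=a_1$. The Cartier operator $\mathcal C$ on $Y^3=\pi(X)$ then gives $\mathcal C^2(dX/Y)=a_1^{1/4}\,dX/Y$. So for ordinary $E$ (e.g.\ $y^2+xy=x^3+1$ over $\mathbb F_2$) the curve has positive $p$-rank. The paper's own ramification subsection addresses only separability, irreducibility and genus $q^2$, calls the curve superelliptic, and gives no argument for supersingularity.
\end{enumerate}
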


Finally, in Section~4.1 we present closed-form formulas for all coefficients \(g_i,h_j\) in terms of \( (\kappa, \lambda_1)\) subject to the curve equation $Y^{q+1} = \pi(X)$ (with the detailed derivation relegated to Appendix~B). This gives a complete explicit description of rank-two Drinfeld modules over \(\A\), analogous to the Carlitz module in the polynomial case and complementing the rank-two families obtained by Hu and Huang and by Anderson and Thakur for the rational function field and truncated projective line settings.

Our results open the way to further study of their arithmetic, moduli spaces, and special values. The methods developed here suggest new avenues for approaching higher-rank cases, which remain wide open.

\subsection{Structure of the paper}

The paper is organized as follows. In Section~2 we recall the necessary background on elliptic curves over finite fields, their coordinate rings, and the formalism of twisted polynomials and Anderson motives. Section~3 contains the derivation of the motive relation for rank-two Drinfeld modules, the construction of the admissible basis, and the stability conditions for the \(\tau\)-action. In Section~4 we present the complete family of Drinfeld modules, identify the \(j\)-invariant, and describe the moduli space as an open subset of a supersingular curve; we also collect the explicit coefficient formulas. Finally, the appendices contain the technical computations supporting the main results.

\section*{Notation}

Throughout this paper we use the following notation.

\subsection*{Elliptic curve and coordinate rings}
\begin{description}
\item[$E$] elliptic curve over $\mathbb F_q$: 
  $y^2 + a_1xy + a_3y = x^3 + a_2x^2 + a_4x + a_6$.
\item[$\infty$] the point at infinity on $E$.
\item[$\mathbf A$] coordinate ring $\mathbb F_q[x,y]$ of functions on $E\setminus\{\infty\}$.
\item[$\mathbf K$] fraction field $\mathbb F_q(x,y)$.
\item[$\A_L$] $L$-coordinate ring: $\A_L = L\otimes_{\mathbb F_q}\mathbf A$.
\item[$\KK$] copy of $\mathbf K$ generated by independent variables $\theta,\eta$.
\item[$\mathbf{K}_L$] $\mathbf{K} \otimes_{\mathbb{F}_q} L $.
\item[$\theta,\eta$] variables corresponding to $x,y$, satisfying the elliptic equation.
\item[$L$] algebraically closed field containing $\KK$.
\end{description}

\subsection*{Points, lines and auxiliary functions}
\begin{description}
\item[$\xi=(\theta,\eta)$] the $\KK$-rational point corresponding to $(\theta,\eta)$.
\item[$V=(\alpha,\beta)$] the unique point such that $V\dotplus (V^{(1)})^\vee=\xi$.
\item[$V^{(1)}$] Frobenius twist: $(\alpha^q,\beta^q)$.
\item[$V^\vee$] the inverse of $V$ under the elliptic curve group law.
\item[$\dot V=(\dot\alpha,\dot\beta)$, $\ddot V=(\ddot\alpha,\ddot\beta)$] the two poles of $h$, satisfying $\dot V\dotplus\ddot V=V$.
\item[$\dot V^\vee$, $\ddot V^\vee$] inverses of $\dot V$ and $\ddot V$.
\item[$\mathfrak L_m$] line through $\xi$, $V^{(1)}$ and $V^\vee$: 
  $y=m(x-\theta)+\eta$.
\item[$m$] slope of $\mathfrak L_m$.
\item[$\mathfrak{L}(x,y)$] line through $V^\vee$, $\dot V$ and $\ddot V$:
  $y-\beta^\vee-\kappa(x-\alpha)=0$.
\item[$\kappa$] slope of $\mathfrak L$; also the $j$-invariant of the rank-two Drinfeld module.
\item[$f=f_\alpha$] shtuka function: 
  $f=\dfrac{y-\eta-m(x-\theta)}{x-\alpha}$, 
  with divisor $(f)=V^{(1)}+\xi-V-\infty$.
\item[$h$] function 
  $h=\dfrac{x-\alpha}{y-\beta^\vee-\kappa(x-\alpha)}$, 
  with divisor $(h)=V+\infty-\dot V-\ddot V$. 
\item[$\Sgn$, $\Sgn^L$] sign (leading coefficient) and its extension to $L$.
\end{description}

\subsection*{Drinfeld modules and coefficients}
\begin{description}
\item[$\psi$] a sign-normalised rank-one Drinfeld $\mathbf A$-module.
\item[$\phi$] a sign-normalised rank-two Drinfeld $\mathbf A$-module.
\item[$\psi_x$] $\theta+x_1\tau+x_2\tau^2$, with $x_2=1$.
\item[$\psi_y$] $\eta+y_1\tau+y_2\tau^2+y_3\tau^3$, with $y_3=1$.
\item[$\phi_x$] $\theta+g_1\tau+g_2\tau^2+g_3\tau^3+\tau^4$.
\item[$\phi_y$] $\eta+h_1\tau+h_2\tau^2+h_3\tau^3+h_4\tau^4+h_5\tau^5+\tau^6$.
\item[$g_i$ ($i=1,2,3$)] coefficients of $\phi_x$.
\item[$h_j$ ($j=1,\dots,5$)] coefficients of $\phi_y$.
\item[$x_1,x_2, y_1,y_2,y_3$] coefficients of the rank-one module (see Lemma~\ref{lem:coeofrank1}).
\end{description}

\subsection*{Anderson motives and generators}
\begin{description}
\item[$M_\phi$] Anderson motive attached to $\phi$.
\item[$M_\psi$] Anderson motive attached to $\psi$, equal to $V^{-1}\A_L$.
\item[$\s $] formal generator of $M_\phi$, so $M_\phi=L\{\tau\}\cdot \s $.
\item[$S_0$] generator of $M_\psi$, with $S_0=h\,\mathbf s\wedge\tau\mathbf s$ with $ \tau S_0 = -f_\alpha S_0 $.
\item[$E_1,E_2$] admissible basis of $M_\phi$: 
  $E_1=\mathbf s$, 
  $E_2=h(\lambda_1\tau\mathbf s+(x+\gamma_1)\mathbf s)$.
\item[$C_0,C_1,C_2$] coefficients of the motive relation 
  $C_2\tau^2\mathbf s=C_1\tau\mathbf s+C_0\mathbf s$.
\item[$C_1'$] auxiliary polynomial: $C_1'=hC_2=y+(\kappa+a_1)(x-\alpha)-\beta$.
\item[$P_0,P_1,P_2,P_3$] intermediate coefficients in the derivation of the motive relation (see Proposition~\ref{prop:motive2}).
\item[$\lambda_1,\lambda_2$] constants satisfying $C_1=\lambda_2C_2+\lambda_1C_1'$.
\item[$\gamma_1,\gamma_2$] constants:
  $\gamma_1=(\kappa^q+m+a_1)(m-\kappa)-\theta$,
  $\gamma_2=-\kappa^q-m-a_1$.
\item[$\chi_1,\chi_2$] coefficients of $C_2=x^2-\chi_1x+\chi_2$, where
  $\chi_1=\kappa^2+a_1\kappa-a_2-\alpha$,
  $\chi_2=a_4-2\kappa\beta^\vee+\kappa^2\alpha-a_1\beta^\vee-a_3\kappa+a_2\alpha+\alpha^2$.
\end{description}

\subsection*{Stability conditions and auxiliary quantities}
\begin{description}
\item[$\Gamma_1$] $-\dfrac{1}{h^{(1)}}T_1$, used in stability.
\item[$\Gamma_2$] $\dfrac{ (x-\alpha)}{h^{(1)}}T_2$, used in stability.
\item[$T_1,T_2$] coefficients in $\tau E_2=\frac{1}{\lambda_1}T_1E_1+\frac{1}{\lambda_1} T_2 E_2$.
\item[$\Upsilon_1,\Upsilon_2$] basis of the Riemann–Roch space 
  $\mathcal{L}(-V^\vee+\infty+\dot V^{(1)\vee}+\ddot V^{(1)\vee})$.
\item[$ \uk $]   $\kappa^q-\kappa$.
\item[$\vk $]   $(\beta^\vee)^q-\beta^\vee-\kappa^q\alpha^q+\kappa\alpha$.
\item[$\muk $]   $u\gamma_2+\alpha^q-\alpha$ 
  (also equal to $\lambda_1^q\lambda_2+\gamma_1^q+\alpha^q$). 
\end{description}

\subsection*{Moduli space and curve}
\begin{description}
\item[$\pi(X)$] polynomial defining the moduli curve: 
  $\pi(X)=(u\gamma_1-v)\big|_{\kappa=X}$; explicit form given in the text.
\item[$C_\pi$] supersingular curve $Y^{q+1}=\pi(X)$. 
\end{description}

\subsection*{Miscellaneous}
\begin{description}
\item[$\tau$] $q$-power Frobenius operator, $\tau c=c^q\tau$.
\item[$L\{\tau\}$] twisted polynomial ring in $\tau$.
\item[$\mathcal L(D)$] Riemann–Roch space of functions with poles bounded by divisor $D$.
\item[$D^{-1}\A_L$] fractional ideal $\bigcup_{k\ge0}\mathcal L(D+k\infty)$.
\item[$\dotplus$] addition law on the elliptic curve (to distinguish from divisor addition $+$).
\item[$V^{-1}\A_L$] fractional ideal associated to $V$ (equal to $M_\psi$).
\item[$(g)_0,(g)_\infty,(g)$] zero divisor, pole divisor, and principal divisor of $g$.
\item[$\ord_\infty$] valuation at infinity.
\item[$\Res_P$] residue at a point $P$ (used in the filtration description).
\item[$\mathcal F_n$] shtuka filtration of $M_\phi$ (see Theorem).
\item[$\exp_\psi,\log_\psi$] exponential and logarithm of the rank-one module.
\item[$d_i,\ell_i$] coefficients of $\exp_\psi$ and $\log_\psi$.
\item[$\pi_\psi$] period of the rank-one module.
\end{description}
 
\section{Preliminary and Recall of Rank One Drinfeld Module}

\subsection{Definition of rank $1$ Drinfeld module}

In the paper, we consider the elliptic curve
\[
    E: y^2 + a_1 x y + a_3 y = x^3 + a_2 x^2 + a_4 x + a_6, \quad a_i \in \mathbb{F}_q,
\]
with point at infinity denoted by $\infty$.
Let $\mathbf{A} = \mathbb{F}_q[x, y]$ be the coordinate ring of functions on $E$ regular away from $\infty$, and let $\mathbf{K} = \mathbb{F}_q(x, y)$ be its fraction field.

We fix independent variables $\theta, \eta$ so that $\mathbb{A} := \mathbb{F}_q[\theta, \eta]$ and $\KK := \mathbb{F}_q(\theta, \eta)$ are isomorphic copies of $\mathbf{A}$ and $\mathbf{K}$, with the canonical isomorphism
\[
    \iota : \mathbb{F}_q(x, y) \longrightarrow \mathbb{F}_q(\theta, \eta), \quad \iota(x) = \theta, \quad \iota(y) = \eta.
\]

Let $L / \KK$ be an algebraically closed field. A rank $1$ Drinfeld $\mathbf{A}$-module is an $\mathbb{F}_q$-algebra homomorphism
\[
    \psi : \mathbf{A} \longrightarrow L\{\tau\},
\]
where $L\{\tau\}$ is the ring of twisted polynomials in the $q$-th power Frobenius endomorphism $\tau$, subject to the relation $\tau c = c^q \tau$ for $c \in L$.

The Drinfeld module $\psi$ is completely determined by its action on the generators $x$ and $y$:
\begin{align}
    \psi_x &= \theta + x_1 \tau + x_2 \tau^2, \label{eq:psix} \\
    \psi_y &= \eta + y_1 \tau + y_2 \tau^2 + y_3 \tau^3. \label{eq:psiy}
\end{align}

\begin{defn}
    The sign function $\Sgn$ on $\mathbf{A}\setminus\{0\}$ is defined as follows. Since $\mathbf{A}$ has an $\mathbb{F}_q$-basis consisting of monomials
    \[
        \mathbf{A} = \operatorname{Span}_{\mathbb{F}_q}(x^i, x^j y : i \geqslant 0, j \geqslant 0),
    \]
    and the monomials listed have distinct degrees.
\end{defn}
We can define the leading term of a nonzero element $a \in \mathbf{A}$.
\begin{defn}
 The sign $\Sgn(a) \in \mathbb{F}_q^\times$ is defined to be the leading coefficient of $a$. This definition extends to a group homomorphism on $\KK_\infty^\times$, and more generally to the completion of $\KK$ at the infinite place.
    
    For an extension $L / \KK$, we define the extended sign function
    \[
        \Sgn^L : \mathcal{K}_L \longrightarrow L, \quad \Sgn^L(f \otimes l) = \Sgn(f) \, l,
    \]
    where $\mathcal{K}_L = L \otimes_{\mathbb{F}_q} \mathbf{K}$ is the function field of $E$ over $L$.
\end{defn}
A sign-normalized rank $1$ Drinfeld $\mathbf{A}$-module requires that its leading coefficients are $1$:
\[
    x_2 = \Sgn(x) = 1, \qquad y_3 = \Sgn(y) = 1.
\]

The coefficients of $\psi_x$ and $\psi_y$ have been determined in earlier papers. Next, we give some necessary definitions to express these coefficients explicitly.

\subsection{Definition of $V$}
We need to distinguish the two addition structures below.
\begin{notation}
    We use $ + $ to represent the divisor addition, while $ \dotplus $ is applied to the group addition of the $ L $-points  $ E(L) $ of Elliptic curve.
\end{notation}
Let $\xi = (\theta, \eta)$ denote the $\KK$-rational point on $E$ corresponding to the variables. Assume that $V^{\vee}$ denotes the inverse of $V$ under the elliptic curve group law, i.e.,
\[
    V^{\vee} \dotplus V = (\infty).
\]

We define \(V=(\alpha,\beta)\in E(L)\) as the unique point satisfying  
\[
V \dotplus (V^{(1)})^{\vee} = \xi,
\] 
where \(V^{(1)}=(\alpha^q,\beta^q)\) denotes the Frobenius twist of \(V\), and \((V^{(1)})^{\vee}=(\alpha^q,(\beta^q)^{\vee})\) is its inverse under the elliptic curve group law.
\begin{remark} 
    The existence of such a point \(V\) is guaranteed by class field theory.  
\end{remark}
 
Define the line $ \mathfrak{L}_m $ connecting $\xi$, $V^{(1)}$, and $V^{\vee}$ by
\[
    y = m (x - \theta) + \eta,
\]
where $m$ is its slope, given equivalently by
\[
    m = \frac{\beta^q - \beta^{\vee}}{\alpha^q - \alpha} = \frac{\eta - \beta^{\vee}}{\theta - \alpha} = \frac{\eta - \beta^q}{\theta - \alpha^q}.
\]

We introduce some useful equalities about this line.

\begin{lem}\label{lem:m2}
    \begin{enumerate}
        \item From the fact that $\alpha, \alpha^q, \theta$ are the three roots of the cubic obtained by intersecting the line $ \mathfrak{L}_m : y = m(x - \theta) + \eta$ with the elliptic curve, we have the standard relations
        \[
            \alpha + \alpha^q + \theta = m^2 + a_1 m - a_2,
        \]
        and
        \[
            \alpha^{q+1} + \alpha\theta + \alpha^q\theta = a_4 - 2m(\eta - m\theta) - a_1(\eta - m\theta) - a_3m.
        \]
        
        \item Since the points $V^{\vee}$, $V^{(1)}$, and $\xi$ lie on this line, we have the following equalities:
        \[
            \beta^\vee = m(\alpha - \theta) + \eta, \quad \beta^q = m(\alpha^q - \theta) + \eta,
        \]
        and
        \[
            (\beta^{\vee})^q + \beta^q = -a_1\alpha^q - a_3.
        \]
        Thus,
        \begin{align}
            (\beta^{\vee})^q - (\beta^{\vee}) - \kappa^q \alpha^q + \alpha m
            &= -a_1\alpha^q - a_3 - m(\alpha^q + \alpha) - 2\eta + 2m\theta - \kappa^q \alpha^q + \alpha m \notag \\
            &= -(m + \kappa^q + a_1)\alpha^q - a_3 - 2\eta + 2m\theta. \label{eq:betavee}
        \end{align}
    \end{enumerate}
\end{lem}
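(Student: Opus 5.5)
The plan is to get everything in the lemma by substituting the line $\mathfrak{L}_m$ into the Weierstrass equation and reading off Vieta's formulas, together with the explicit formula for the group inverse. Before that, I will justify that $\xi$, $V^{(1)}$ and $V^\vee$ are collinear, so that $\mathfrak L_m$ is well defined.

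\emph{Collinearity.} From $V\dotplus (V^{(1)})^\vee=\xi$ we get
\[
\xi\dotplus V^{(1)}\dotplus V^\vee=(\infty).
\]
By the chord construction of the group law, three points of $E$ summing to $(\infty)$ are the three intersection points, counted with multiplicity, of a line with $E$. I also need this line to be non-vertical and the $x$-coordinates involved to be as expected.
\begin{itemize}
\item Suppose $\alpha^q=\alpha$. Then $\alpha\in\mathbb F_q$, so $V^{(1)}=V$ (up to the sign of $\beta$). This forces $\xi$ to be $(\infty)$ or $2V$, which is defined over a finite field. That is impossible, since $\theta$ is transcendental over $\mathbb F_q$. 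So $\alpha^q\neq\alpha$.
\item For the same reason, $\theta$ differs from $\alpha$ and from $\alpha^q$.
\end{itemize}
Hence the three points have distinct $x$-coordinates $\theta,\alpha^q,\alpha$; here $V^\vee$ has $x$-coordinate $\alpha$. Distinct $x$-coordinates rule out a vertical line, so the line is non-vertical, and its slope $m$ is given by any of the three difference quotients. This is the only place where some care (non-degeneracy) is needed, and I expect it to be the main, though minor, obstacle.

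\emph{Part (1).} Write $c=\eta-m\theta$ and substitute $y=mx+c$ into the Weierstrass equation. This gives
\[
x^3+(a_2-m^2-a_1m)x^2+(a_4-2mc-a_1c-a_3m)x+(a_6-c^2-a_3c)=0.
\]
Its roots are exactly $\alpha,\alpha^q,\theta$. Vieta's formula for the sum of the roots gives $\alpha+\alpha^q+\theta=m^2+a_1m-a_2$. The formula for the second elementary symmetric function gives $\alpha^{q+1}+\alpha\theta+\alpha^q\theta=a_4-2m(\eta-m\theta)-a_1(\eta-m\theta)-a_3m$. These are the two stated identities.

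\emph{Part (2).} The identities $\beta^\vee=m(\alpha-\theta)+\eta$ and $\beta^q=m(\alpha^q-\theta)+\eta$ just say that $V^\vee$ and $V^{(1)}$ lie on $\mathfrak L_m$. For the third identity I use the inverse formula on $E$: $(a,b)^\vee=(a,-b-a_1a-a_3)$. Since $a_1,a_3\in\mathbb F_q$, Frobenius commutes with taking inverses, so $(\beta^\vee)^q$ is the $y$-coordinate of $(V^{(1)})^\vee$. This gives
\[
(\beta^\vee)^q=-\beta^q-a_1\alpha^q-a_3,
\]
which is the claimed relation. Finally, I substitute this together with the two line equations:
\[
(\beta^\vee)^q-\beta^\vee=-a_1\alpha^q-a_3-m(\alpha^q-\theta)-\eta-m(\alpha-\theta)-\eta.
\]
Adding $-\kappa^q\alpha^q+\alpha m$ to both sides, the $m\alpha$ terms cancel. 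Collecting the $\alpha^q$ terms then yields $-(m+\kappa^q+a_1)\alpha^q-a_3-2\eta+2m\theta$, which is \eqref{eq:betavee}. This last step is a routine linear simplification.
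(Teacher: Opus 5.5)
Your proof is correct and follows the paper's own argument: the paper justifies the lemma by Vieta's formulas for the cubic obtained by substituting $\mathfrak{L}_m$ into the Weierstrass equation, by the fact that $V^\vee$ and $V^{(1)}$ lie on $\mathfrak{L}_m$, and by the $q$-th power of $\beta^\vee=-\beta-a_1\alpha-a_3$, followed by the same linear simplification.

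One correction to your non-degeneracy aside. The inequality $\theta\neq\alpha$ does not follow ``for the same reason''. If $\theta=\alpha$ then $\xi=V^\vee$, i.e.\ $V^{(1)}=2V$. This contradicts the transcendence of $\theta$ only when the $q$-Frobenius of $E$ is not $[2]$. When it is $[2]$ (e.g.\ $y^2+y=x^3+\omega$ over $\mathbb{F}_4$ with $\omega\notin\mathbb{F}_2$, which has a single rational point), one really gets $V=\xi^\vee$, and $\mathfrak{L}_m$ is tangent at $\xi$.

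You never need this, though. The fact $\alpha\neq\alpha^q$ alone makes the line through $V^{(1)}$ and $V^\vee$ non-vertical, its third intersection point with $E$ is $\xi$, and Vieta's formulas hold with multiplicity.
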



\subsection{Definition of $f$}

\begin{notation}
    We denote by $ (g)_0 $, $ (g)_\infty $, $ (g) $ the zero, pole, principal divisor of $ g \in \mathbf{K}_L $. 
\end{notation}
Let $ \mathbb{H}= \KK(\alpha, \beta) = \KK(\alpha)$ denote Hilbert class field.
According to \cite{GreenPapanikolas18}, the shtuka function
\[
    f_\alpha =  \frac{y - \eta - m(x - \theta)}{x - \alpha},
\]
which is an element of $ \mathbf{K}_H $.
For any multiplier $ s \in L^\times $,  the divisor of $ f = s \cdot f_\alpha $ is given by
\[
    ( f) =  V^{(1)}  +  \xi - V - \infty.
\]


\begin{remark}
    The function $ f $ is called the shtuka function for $\A$. Its construction is originally due to Thakur and is rooted in the work of Drinfeld and Mumford on shtukas. When the shtuka function satisfies the property that $ \Sgn^L(f) = s = 1$, the correspondent rank-one Drinfeld module is sign-normalized. 
    The isogeny class of rank-one Drinfeld module is independant on the choice $s \in L^\times $ (which in fact depends only on $ V $). In our construction for rank-two sign-normalized Drinfeld module, we involve the negative sign condition of $f $, i.e., $ s = -1$.
\end{remark}

\subsection{Definition of an $\mathbf{A}$-motive}
Let $\mathcal{L}(k\infty)$ denote the space of rational functions on $E$ with poles of order at most $k$ at $\infty$.
Let $L / \KK$ be an algebraically closed field. Denote by $\A_L$ the coordinate ring of the affine curve $(L \times_{\mathbb{F}_q} E) \setminus \{\infty\}$. Equivalently, we have
\[
    \A_L = \bigcup_{k \geqslant 0} \mathcal{L}(k\infty),
\]
where $\mathcal{L}(k\infty)$ denotes the $L$-vector space of functions on $E$ with poles of order at most $k$ at $\infty$ and regular elsewhere.

\begin{notation}
    For a divisor $D$, we define the fraction ideal
    \[
        D^{-1}\A_L = \{ f \in  \mathbf{K}_L \mid \text{there exists } g \in \mathcal{L}(D + k\infty) \text{ such that } f g \in \A_L \}.
    \]
\end{notation}

In particular, when $D$ is a negative divisor, $D^{-1}\A_L$ stands for a regular ideal in $\A_L$.

\begin{lem}
    The fraction ideal $D^{-1}\A_L$ equals
    \[
        \bigcup_{k \geqslant 0} \mathcal{L}(D + k\infty).
    \]
\end{lem}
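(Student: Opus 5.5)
We prove the two inclusions between $D^{-1}\A_L$, as defined in the Notation above, and $U:=\bigcup_{k\ge 0}\mathcal L(D+k\infty)$. Here $\mathcal L(D')$ denotes the $L$-space of $g\in\mathbf K_L$ with $(g)+D'\ge 0$. For a divisor $D=\sum_P n_P P$ we may assume $D$ has finite support away from $\infty$, since its $\infty$-coefficient is absorbed by $k$. Note that $U$ is an $\A_L$-module: if $a\in\mathcal L(j\infty)$ and $g\in\mathcal L(D+k\infty)$, then $ag\in\mathcal L(D+(j+k)\infty)$.

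For $U\subseteq D^{-1}\A_L$, take $f\in\mathcal L(D+k\infty)$. We need $g\in\mathcal L(D+k'\infty)$, for some $k'$, with $fg\in\A_L$. The naive choice $g=f$ fails, because $f^2$ may have poles of order $2n_P$ at the points of $D$. We therefore read the defining condition as intended: $g$ ranges over the ``inverse'' space $\mathcal L(-D+k'\infty)$, so that the product of a function with poles bounded by $D$ and one with zeros forced along $D$ is regular away from $\infty$. With that reading, choose any $g\in\mathcal L(-D+k'\infty)$ with $g\neq 0$. Such $g$ exists for $k'\ge\deg D+1$ by Riemann--Roch on the genus-one curve $E_L$, since then $\ell(-D+k'\infty)=k'-\deg D$. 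Then
\[
(fg)\ \ge\ -D-k\infty+D-k'\infty\ =\ -(k+k')\infty,
\]
so $fg\in\A_L$.

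For $D^{-1}\A_L\subseteq U$, suppose $f\in\mathbf K_L$ and $fg\in\A_L$ for all $g\in\mathcal L(-D+k'\infty)$ with $k'$ large; this is the natural definition of the fractional ideal inverse to the ideal attached to $-D$. We must show $\operatorname{ord}_P f\ge -n_P$ at every finite point $P$. The key input is that for $k'$ large the linear system $\mathcal L(-D+k'\infty)$ has no base points away from $\infty$. Indeed, by Riemann--Roch, for $k'\ge\deg D+2$ we have
\[
\ell(-D-P+k'\infty)=\ell(-D+k'\infty)-1 .
\]
So for each finite $P$ there is some $g$ in the space with $\operatorname{ord}_P g=-n_P$ exactly. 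Then $\operatorname{ord}_P(fg)\ge 0$ gives $\operatorname{ord}_P f\ge n_P\cdot(-1)\cdot(-1)$ in the right sign convention, namely $\operatorname{ord}_P f\ge -n_P$. Since $f$ has only finitely many poles, taking $k=-\operatorname{ord}_\infty f$ (or $0$ if this is negative) gives $f\in\mathcal L(D+k\infty)$.

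The main obstacle is not mathematical but interpretive. The definition as literally stated, with $g\in\mathcal L(D+k\infty)$ and ``there exists'', makes both inclusions false or trivial; for example, $g=0$ satisfies it. So the first step is to fix the intended meaning: $D^{-1}\A_L$ is the fractional ideal consisting of $f$ with $f\cdot\mathcal L(-D+\ast\,\infty)\subseteq\A_L$. That this is the right reading is supported by the later identification $M_\psi=V^{-1}\A_L$. Once this is fixed, everything reduces to the Riemann--Roch computation $\ell(D')=\deg D'$ for $\deg D'>0$ on $E_L$, which is valid because $E$ remains genus one after base change to the algebraically closed field $L$.
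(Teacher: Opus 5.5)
The paper gives no proof of this lemma. It is stated immediately after the Notation, as if it were a restatement of the definition, so there is no argument of the authors' to compare yours with.

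Your diagnosis of the definition is correct. Read literally, it yields $D^{-1}\A_L=\mathbf{K}_L$: take $g=0$. Even if $g\neq 0$ is required, for $k\gg 0$ there is a nonzero $g\in\mathcal{L}(D+k\infty)$ that is regular off $\infty$ and vanishes at the finite poles of $f$ to at least their order. The repaired definition $D^{-1}\A_L=\{f\in\mathbf{K}_L : f\cdot\mathcal{L}(-D+k'\infty)\subseteq\A_L \text{ for all } k'\ge 0\}$ is the one consistent with the paper's later use $V^{-1}\A_L=(1,f_\alpha)_{\A_L}$. With it, your argument is sound. Its real content is that, for $k'\ge\deg D+2$, the space $\mathcal{L}(-D+k'\infty)$ contains, for each finite $P$, an element of exact order $n_P$ at $P$. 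Equivalently, the ideal $\bigcup_{k'}\mathcal{L}(-D+k'\infty)$ localizes to $\mathfrak{m}_P^{n_P}$, and genus-one Riemann--Roch gives this directly.

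Two points need fixing in the write-up.

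First, use a single quantifier in the definition. In the first half you phrase the condition as ``there exists a nonzero $g\in\mathcal{L}(-D+k'\infty)$ with $fg\in\A_L$''. Under that reading every $f$ qualifies again: a nonzero $g$ in that space, regular off $\infty$ and vanishing at the finite poles of $f$ to at least their order, exists for $k'\gg 0$. So the reverse inclusion you prove in the second half would be false. Only the ``for all $g$'' reading makes the lemma true. Fortunately your estimate $(fg)\ge-(k+k')\infty$ holds for every $g\in\mathcal{L}(-D+k'\infty)$, which is exactly what that reading requires. The Riemann--Roch existence remark in the first half is then unnecessary.

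Second, the sign in the second half is wrong. An element $g\in\mathcal{L}(-D+k'\infty)\setminus\mathcal{L}(-D-P+k'\infty)$ has $\ord_P g=n_P$, not $-n_P$; this is what your own first-half inequality $(g)\ge D-k'\infty$ says. Then $\ord_P(fg)\ge 0$ gives $\ord_P f\ge -n_P$ at once, with no appeal to a ``sign convention''. This also correctly handles points with $n_P<0$, where $g$ has a pole and forces $f$ to vanish.
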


We define the rank one $\mathbf{A}$-motive $M_\psi$ associated to the Drinfeld module $\psi$ by
\[
    M_\psi = V^{-1}\A_L,
\]
equipped with the $\tau$-action
\[
    \tau g = f g^{(1)}, \qquad g \in M_\psi,
\]
where $g^{(1)}$ denotes the Frobenius twist of $g$.

A key fact is the following Riemann-Roch computation:
\[
    \mathcal{L}((V) + i(\infty)) = \operatorname{Span}_{L}(1, f, f f^{(1)}, \dots, f f^{(1)} \cdots f^{(i-1)}).
\]
Therefore,
\[
    M_\psi = (1, f)_{\A_L} = \bigoplus_{i \geqslant 0} L \cdot f f^{(1)} \cdots f^{(i-1)}.
\]
\begin{notation}
The Frobenius twist on $ \mathbf{K}_L $ is defined as 
the extension of the $q$-Frobenius on $ 1 \otimes L $ and the identity map on $ \KK $. That is 
\[
    (-)^{(1)} : l g(x,y) \mapsto l^q g(x,y) ,
\]
for $ l \in L $ and $g(x,y) \in \mathbf{K} $.
\end{notation}
As a left $L\{\tau\}$-module, we have the equality
\begin{equation}
    \label{eq:phiS0=1}
    M_\psi = L\{\tau\} \cdot S_\psi.
\end{equation}
where $S_\psi$ is just the trivial function $1$.  

For $a(x, y) \in \mathbf{A}$ with $\deg a = i$, we can write uniquely
\[
    a(x, y) = a(\xi) + b_1 f + b_2 f f^{(1)} + \cdots + b_i f f^{(1)} \cdots f^{(i-1)}, \quad b_j \in L,
\]
where $b_i \neq 0$. This decomposition gives rise to the Drinfeld module $\psi$ via
\[
    \psi_a = a(\theta, \eta) + b_1 \tau + b_2 \tau^2 + \cdots + b_i \tau^i.
\]

\begin{lem} 
    The map $\psi : \mathbf{A} \to L\{\tau\}$ defined above is a rank $1$ Drinfeld $\mathbf{A}$-module defined over $\mathbb{H} = \KK(\alpha, \beta) = \KK(\alpha )$.  
\end{lem}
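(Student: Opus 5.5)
The plan is to show that the assignment $a\mapsto\psi_a$, defined through the expansion of $a$ in the basis $1,f,ff^{(1)},\dots$ of $M_\psi$, is precisely the action of $a\in\mathbf A$ on the $L\{\tau\}$-module $M_\psi=L\{\tau\}\cdot 1$. Multiplicativity and the other Drinfeld-module properties will then follow from that identification, and the field of definition will be read off from $f$.

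\textbf{Step 1: every element of $M_\psi$ comes from a unique twisted polynomial.} Iterating $\tau g=fg^{(1)}$ starting from $g=1$ gives
\[
\tau^i\cdot 1=f f^{(1)}\cdots f^{(i-1)}.
\]
By the Riemann--Roch computation $\mathcal L(V+i\infty)=\operatorname{Span}_L(1,f,\dots,ff^{(1)}\cdots f^{(i-1)})$, these elements form an $L$-basis of $M_\psi$. Their pole orders at $\infty$ are $0,1,2,\dots$, and they are all nonzero. It follows that the map
\[
L\{\tau\}\to M_\psi,\qquad P\mapsto P\cdot 1
\]
is an isomorphism of left $L\{\tau\}$-modules.

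\textbf{Step 2: $\mathbf A$ acts by $L\{\tau\}$-linear maps.} Multiplication by $a\in\mathbf A$ preserves $M_\psi=V^{-1}\A_L$. Since $a$ has coefficients in $\mathbb F_q$, we have $a^{(1)}=a$, so
\[
\tau(ag)=f\,a\,g^{(1)}=a\,\tau g .
\]
Thus multiplication by $a$ commutes with $\tau$, and it is clearly $L$-linear.

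\textbf{Step 3: multiplicativity.} The displayed decomposition of $a$ says exactly that $a\cdot 1=\psi_a\cdot 1$. Here the constant term equals $a(\xi)$ because each $ff^{(1)}\cdots$ vanishes at $\xi$, since $\xi$ is a zero of $f$. For $a,b\in\mathbf A$, Step 2 gives
\[
(ab)\cdot 1=a\cdot(\psi_b\cdot 1)=\psi_b\cdot(a\cdot 1)=\psi_b\psi_a\cdot 1 .
\]
Because $\mathbf A$ is commutative this equals $\psi_a\psi_b\cdot 1$, and injectivity from Step 1 yields $\psi_{ab}=\psi_a\psi_b$. Additivity and $\mathbb F_q$-linearity are immediate.

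\textbf{Step 4: the remaining properties.} The constant term of $\psi_a$ is $\iota(a)$, and $\deg_\tau\psi_a=\deg a=-\ord_\infty(a)$, so $\psi$ has rank one. For the field of definition: $f=f_\alpha$ has coefficients in $\KK(\alpha,\beta,m)$. Here $m=(\eta-\beta^\vee)/(\theta-\alpha)$ and $\beta^\vee$ is a polynomial in $\alpha,\beta$, so $f\in\mathbf K\otimes\KK(\alpha,\beta)$, and so is every $ff^{(1)}\cdots f^{(i-1)}$. The coefficients $b_j$ are obtained by solving a triangular linear system with nonzero diagonal over this field, so they lie in $\mathbb H=\KK(\alpha,\beta)$.

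It remains to see that $\KK(\alpha,\beta)=\KK(\alpha)$. The point $V$ satisfies a quadratic relation over $\KK(\alpha)$, and the relations in Lemma~\ref{lem:m2} express $m$, hence $\beta$, rationally in $\alpha$ and $\theta,\eta$. Concretely, I would solve the linear system coming from $\beta^\vee=m(\alpha-\theta)+\eta$ together with the identity $\alpha+\alpha^q+\theta=m^2+a_1m-a_2$.

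\textbf{Expected main obstacle.} I expect this last equality $\KK(\alpha,\beta)=\KK(\alpha)$ to be the hardest step, along with the justification that the basis $ff^{(1)}\cdots f^{(i-1)}$ has exactly one new pole at $\infty$ at each stage. The latter comes from $\Sgn$-normalization and the divisor $(f)=V^{(1)}+\xi-V-\infty$. Everything else is formal.
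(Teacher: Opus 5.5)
Your Steps 1--3, and Step 4 up to the conclusion that the coefficients lie in $\KK(\alpha,\beta)$, are correct. Together they show that $\psi$ is a rank-one Drinfeld $\mathbf A$-module with coefficients in $\KK(\alpha,\beta)$. This is the standard motive argument; the paper states the lemma without proof, relying on the rank-one theory of Green--Papanikolas.

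\textbf{The gap.} It lies in the step you flagged, $\KK(\alpha,\beta)=\KK(\alpha)$, and your concrete plan for it does not work. The identity $\alpha+\alpha^q+\theta=m^2+a_1m-a_2$ is quadratic in $m$, not linear. Its two roots $m$ and $-m-a_1$ are not distinguished by it. The relation $\beta^\vee=m(\alpha-\theta)+\eta$ only shows $\KK(\alpha,\beta)=\KK(\alpha,m)$. So these two equations, like the Weierstrass equation for $\beta$ over $\KK(\alpha)$, give only $[\KK(\alpha,\beta):\KK(\alpha)]\le 2$, not equality.

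\textbf{The fix.} You need the \emph{second} Vieta identity in Lemma~\ref{lem:m2}(1). Put $e_1=\alpha+\alpha^q+\theta$ and $e_2=\alpha^{q+1}+\alpha\theta+\alpha^q\theta$. Expanding that identity gives $e_2=a_4-a_1\eta+2\theta m^2+(a_1\theta-2\eta-a_3)m$. Substituting $m^2=e_1-a_1m+a_2$ eliminates the quadratic term:
\[
(2\eta+a_1\theta+a_3)\,m=a_4-a_1\eta+2\theta(e_1+a_2)-e_2 .
\]
The coefficient $2\eta+a_1\theta+a_3$ is nonzero. If it vanished, then $\xi=\xi^\vee$, so $\xi$ would be a $2$-torsion point; this is impossible because $\theta$ is transcendental over $\mathbb F_q$. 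Hence $m\in\KK(\alpha)$, and then
\[
\beta=-\beta^\vee-a_1\alpha-a_3=-m(\alpha-\theta)-\eta-a_1\alpha-a_3\in\KK(\alpha).
\]
As a check, for $y^2+y=x^3$ over $\mathbb F_2$ the formula gives $m=e_2$. Using $\alpha^3=(\theta+1)\alpha^2+1$, this reduces to the value $m=\alpha^2+\theta\alpha+1$ recorded in the paper's example. With this replacement your proof is complete.

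A minor point: the pole order $i$ of $ff^{(1)}\cdots f^{(i-1)}$ at $\infty$ follows from its divisor $V^{(i)}+\xi+\xi^{(1)}+\cdots+\xi^{(i-1)}-V-i\infty$ alone. Sign normalization plays no role there.
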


\begin{lem}[Proposition 3.2, 3.3 in \cite{GreenPapanikolas18}]\label{lem:coeofrank1}
    When $ \Sgn (f) = 1 $, the coefficients $x_1$, $y_1$, and $y_2$ of the Drinfeld module $\psi$ can be determined as follows:
    \begin{align*}
        x_1 &= m + m^q + a_1,\\
        x_2 &=1 , \\
        y_1 &= \frac{x_1(\eta^q - \eta)}{\theta^q - \theta}, \\
        y_2 &= \frac{\eta^{q^2} - \eta + x_1 y_1^q - y_1 x_1^q}{\theta^{q^2} - \theta}, \\
        y_3 &= 1 .
    \end{align*}
\end{lem}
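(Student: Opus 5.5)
The plan is to read off the coefficients from the decomposition of $x$ and $y$ in the basis $1, f, ff^{(1)}, \dots$ of $M_\psi$, and then to use commutativity $\psi_x\psi_y=\psi_y\psi_x$ for the remaining ones.

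\textbf{Step 1: $x_1$ and $x_2$.} Since $\deg x=2$, we write
\[
x = \theta + x_1 f + x_2 f f^{(1)} .
\]
The function $f=f_\alpha=(y-\eta-m(x-\theta))/(x-\alpha)$ has leading behaviour $y/x$ at $\infty$, so $\Sgn^L(f)=1$. Comparing signs of the order-two pole at $\infty$ then gives $x_2=1$.

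To get $x_1$, I would study
\[
x-\theta-ff^{(1)} .
\]
This function has at most a simple pole at $\infty$ (away from $\infty$ it lies in $\mathcal L(V)$), so it equals $x_1 f$. Its coefficient can be read off at a convenient point. One option is to take the residue-type expansion at $V$: $x-\theta$ is regular there, so the pole coming from $ff^{(1)}$ must be cancelled by $x_1 f$. Simpler is to expand in the local parameter $x/y$ at $\infty$. Writing $f = y/x - m + O(\cdot)$ and $f^{(1)} = y/x - m^q + \cdots$, the Weierstrass equation gives
\[
(y/x)^2 = x - a_1(y/x) + \cdots .
\]
Matching the coefficient of the simple pole then yields
\[
x_1 = m + m^q + a_1 .
\]

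\textbf{Step 2: $y_1$ and $y_2$.} These follow from $\psi_x\psi_y=\psi_y\psi_x$. Comparing the coefficients of $\tau$ gives
\[
\theta y_1 + x_1\eta^q = \eta y_1 + y_1\theta^q ,
\]
and more precisely
\[
x_1(\eta^q-\eta) = y_1(\theta^q-\theta),
\]
which is the stated formula for $y_1$. Comparing the coefficients of $\tau^2$, with $x_2=y_3=1$, gives
\[
y_2(\theta^{q^2}-\theta) = \eta^{q^2}-\eta + x_1y_1^q - y_1x_1^q .
\]
Setting $y_3=1$ is the same sign argument as in Step 1 applied to $y$, using $\Sgn(ff^{(1)}f^{(2)})=1$.

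\textbf{Main obstacle.} The only delicate step is computing $x_1$ by local expansion at $\infty$. I would do it carefully, keeping the $a_1$ term from $y^2+a_1xy$. As a check, I would verify that $ff^{(1)} - x + \theta$ has the correct divisor, using the relation $\alpha+\alpha^q+\theta=m^2+a_1m-a_2$ from Lemma~\ref{lem:m2}.
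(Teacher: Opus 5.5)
Your proof is correct. The paper gives no argument of its own here, since the lemma is simply quoted from Green--Papanikolas, so your proposal is a self-contained derivation rather than a variant of something in the text.

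The formulas for $y_1,y_2$ as stated are visibly the $\tau$- and $\tau^2$-coefficients of $\psi_x\psi_y=\psi_y\psi_x$, which is exactly your Step~2. The divisions there are fine because $\theta$ is transcendental over $\mathbb{F}_q$.

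Your computation of $x_1$ checks out. With $u=y/x$ one gets $f=u-m+O(u^{-1})$ and $f^{(1)}=u-m^q+O(u^{-1})$. The Weierstrass equation gives $u^2=x-a_1u+O(1)$, so $ff^{(1)}=x-(m+m^q+a_1)u+O(1)$.

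The alternative you mention avoids expansions altogether. Divide $x-\theta=x_1f+ff^{(1)}$ by $f$ and evaluate at the pole $V$ of $f$. This gives $x_1=-f^{(1)}(V)$, which Lemma~\ref{lem:m2} turns into $m+m^q+a_1$.

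Two small remarks:
\begin{enumerate}
\item The first display of Step~2 has a slip: its right-hand side should read $\eta x_1+y_1\theta^q$. The relation you write next is correct.
\item Pushing your expansion at $\infty$ one order further yields the closed form $y_2=m+m^q+m^{q^2}+a_1$ directly. Commutativity only gives $y_2$ in terms of $x_1,y_1$, as in the statement. The two agree; for instance, the closed form is consistent with the $\tau^4$-coefficient identity $x_1+y_2^{q^2}=y_2+x_1^{q^3}$.
\end{enumerate}
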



\subsection{Exponential and Logarithm of $\psi$}
Now we consider only the sign-normialize case, i.e., $ \Sgn (f) = 1 $.
The exponential function of $\psi$ is the unique $\mathbb{F}_q$-linear power series
\[
    \exp_\psi(z) = \sum_{i=0}^\infty \frac{z^{q^i}}{d_i} \in \mathbb{H}[[z]], \quad d_0 = 1,
\]
satisfying the functional equation
\[
    \exp_\psi(\iota(a)z) = \psi_a(\exp_\psi(z)), \quad a \in \mathbf{A}.
\]
The logarithm is its formal inverse:
\[
    \log_\psi(z) = \sum_{i=0}^\infty \frac{z^{q^i}}{\ell_i} \in \mathbb{H}[[z]], \quad \ell_0 = 1,
\]
satisfying
\[
    \iota(a)\log_\psi(z) = \log_\psi(\psi_a(z)).
\]
Set $ \delta = x -\alpha $. From the shtuka function, we have explicit formulas for the coefficients:
\begin{align}
    d_i &= f^{(1)} \cdots f^{(i-1)}|_{\xi^{(i)}}, \quad i \geqslant 1, \\
    \ell_i &= \frac{\delta^{(1)}}{\delta^{(i+1)}} \cdot f^{(1)} \cdots f^{(i)}|_{\xi}, \quad i \geqslant 1.
\end{align}

The exponential $\exp_\psi : \mathbb{C}_\infty \to \mathbb{C}_\infty$ is entire and surjective, with kernel $\Lambda_\psi = \mathbb{A}\pi_\psi$, a rank $1$ $\mathbb{A}$-lattice. The period $\pi_\psi$ admits the product formula
\[
    \pi_\psi = -\frac{\xi^{q/(q-1)}}{\delta^{(1)}(\xi)} \prod_{i=1}^\infty \frac{\xi^{q^i}}{f^{(i)}(\xi)},
\]
where $\xi = -(m + \beta/\alpha)$.

These formulas provide the foundation for the study of Pellarin $L$-series and Anderson generating functions in the context of Drinfeld modules on elliptic curves.
 
\section{Anderson Motive for rank-two Drinfeld module}
\subsection{The Motive Relation}

From now on, we focus exclusively on the rank-two case.

Let $\mathbf{A} = \mathbb{F}_q[x, y]$ be the coordinate ring of the elliptic curve $E$ (where $x$ and $y$ satisfy the defining equation of $E$).   Let $L$ be an algebraically closed field containing $\KK$
with the canonical map 
\[
    \iota : \mathbb{F}_q(x, y) \longrightarrow L, \quad \iota(x) = \theta,\quad \iota(y) = \eta.
\]

A sign-normalized rank-two Drinfeld $\mathbf{A}$-module is an $\mathbb{F}_q$-algebra homomorphism
\[
    \phi : \mathbf{A} \longrightarrow L\{\tau\}
\]
such that
\begin{align}
    \phi_x &= \theta + g_1 \tau + g_2 \tau^2 + g_3 \tau^3 + \tau^4, \label{eq:phix2}\\
    \phi_y &= \eta + h_1 \tau + h_2 \tau^2 + h_3 \tau^3 + h_4 \tau^4 + h_5 \tau^5 + \tau^6. \label{eq:phiy2}
\end{align}
The coefficients $g_i, h_j$ lie in $L$, and the sign-normalization imposes that the leading coefficients are $1$, as indicated.

\begin{prop}\label{prop:motive2}
    Define $ P_3 := h_3 - g_1^{q^2} - (h_5 - g_3^{q^2}) g_2^q - P_0 g_3 $.
    Assume $P_3 \neq 0$. Let $\s$ be the formal generator of the $\mathbf{A}$-motive $M_\phi$ (so that $M_\phi = L\{\tau\}\cdot \s$). Then there is a relation of the form
    \[
        C_2 \tau^2 \s = C_1 \tau \s + C_0 \s,
    \]
    where the coefficients satisfy
    \[
        C_0 \in \langle xy, x^2, y, x, 1 \rangle_L,\quad
        C_1 \in \langle x^2, y, x, 1 \rangle_L,\quad
        C_2 \in \langle x^2, x, 1 \rangle_L,
    \]
    with $\langle \cdots \rangle_L$ denoting the $L$-vector space spanned by the listed monomials in $x$ and $y$.
\end{prop}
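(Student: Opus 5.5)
The plan is to work directly in $M_\phi=L\{\tau\}\s$. Here $\mathbf A$ acts through $a\cdot(u\s)=u\,\phi_a\s$, and the coefficient ring $\A_L$ acts with $L$ commuting past $\tau$ via $\tau c=c^q\tau$. The key observation is that if $c\in\A_L$, then $\tau(c\,\mathbf m)=c^{(1)}\tau\mathbf m$, where the twist $c^{(1)}$ only raises the $L$-coefficients to the $q$-th power. In particular it preserves each space $\langle x^iy^j\rangle_L$. The strategy is to use the two defining equations \eqref{eq:phix2} and \eqref{eq:phiy2} as reduction rules and eliminate $\tau^3\s,\tau^4\s,\tau^5\s,\tau^6\s$ in favour of $\s,\tau\s,\tau^2\s$. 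Throughout, I will record only which monomials in $x,y$ can appear in each coefficient.

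\emph{Step 1 (the $x$-rule).} From $x\s=\phi_x\s$ we get
\[
\tau^4\s=(x-\theta)\s-g_1\tau\s-g_2\tau^2\s-g_3\tau^3\s .
\]
Applying $\tau$ once and twice gives
\[
\tau^5\s=(x-\theta^q)\tau\s-g_1^q\tau^2\s-g_2^q\tau^3\s-g_3^q\tau^4\s
\]
and
\[
\tau^6\s=(x-\theta^{q^2})\tau^2\s-g_1^{q^2}\tau^3\s-g_2^{q^2}\tau^4\s-g_3^{q^2}\tau^5\s .
\]

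\emph{Step 2 (the $y$-rule).} I substitute these into $y\s=\phi_y\s=\eta\s+\sum_{j\le5}h_j\tau^j\s+\tau^6\s$. I reduce successively: first $\tau^6\s$, then the resulting $\tau^5\s$ terms, then the $\tau^4\s$ terms, the last via the $x$-rule. The effective coefficient of $\tau^4\s$ before its final reduction is a constant $P_0\in L$, something like $h_4-g_2^{q^2}-(h_5-g_3^{q^2})g_3^q$, with the exact form fixed by the computation.

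The crucial point is that $x$ enters only as the factors $(x-\theta^{q^k})$ multiplying $\s,\tau\s,\tau^2\s$. Consequently the total coefficient of $\tau^3\s$ is the pure constant
\[
P_3=h_3-g_1^{q^2}-(h_5-g_3^{q^2})g_2^q-P_0g_3 .
\]
This gives
\[
P_3\,\tau^3\s=P_2\,\tau^2\s+P_1\,\tau\s+P_{00}\,\s,
\]
where $P_2,P_1\in\langle x,1\rangle_L$ and $P_{00}\in\langle y,x,1\rangle_L$. Since $P_3\neq0$ by hypothesis, we can divide by it.

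\emph{Step 3 (closing up).} I apply $\tau$ to this expression for $\tau^3\s$; the twists keep the coefficients in the same spans. This gives
\[
\tau^4\s=\tfrac{1}{P_3^q}\bigl(P_2^{(1)}\tau^3\s+P_1^{(1)}\tau^2\s+P_{00}^{(1)}\tau\s\bigr).
\]
Next I substitute the Step 2 expression for $\tau^3\s$ once more. The coefficient products then land in the following spans:
\begin{itemize}
\item $\tau^2\s$: $\langle x\rangle\cdot\langle x\rangle+\langle x\rangle\subseteq\langle x^2,x,1\rangle$;
\item $\tau\s$: $\langle x\rangle\cdot\langle x\rangle+\langle y,x,1\rangle\subseteq\langle x^2,y,x,1\rangle$;
\item $\s$: $\langle x\rangle\cdot\langle y,x,1\rangle\subseteq\langle xy,x^2,y,x,1\rangle$.
\end{itemize}

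Finally I equate this with the Step 1 expression for $\tau^4\s$, which itself contains $\tau^3\s$, reduced again by Step 2 with only constant extra factors. Collecting terms gives $C_2\tau^2\s=C_1\tau\s+C_0\s$, with $C_0,C_1,C_2$ in exactly the stated spans. Clearing the factors $P_3,P_3^{q}$ leaves these spans unchanged.

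The main obstacle is the bookkeeping in Step 2. One has to verify carefully that no $x$-dependent term reaches the $\tau^3\s$ coefficient, so that it is exactly the constant $P_3$, and one has to identify $P_0$ correctly. Iterated twists of the $g_i$ appear here and are easy to misplace. I would carry out this step first, organising the reduction as a Euclidean division of $\phi_y$ by $\phi_x$ on the right in $L\{\tau\}$. The quotient has degree $2$ and the remainder has degree $\le3$, and the $\tau^3$ coefficient of the remainder is $P_3$; this makes the structure transparent.

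I would also check that the resulting relation is nontrivial, i.e.\ $C_2\neq0$. Its $x^2$-coefficient comes from the product of the $x$-parts of $P_2^{(1)}$ and $P_2$, divided by $P_3^{q+1}$. The $x$-part of $P_2$ is the coefficient of $x\,\tau^2\s$ produced by the $\tau^6$ term of $\phi_y$, namely $1$. So this coefficient is $1/P_3^{q+1}\neq0$, which also confirms that $C_2$ genuinely has degree $2$.
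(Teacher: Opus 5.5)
Your proposal is correct and is essentially the paper's argument. The right Euclidean division of $\phi_y$ by $\phi_x$ gives the first identity with constant $\tau^3\s$-coefficient $P_3$, and your $P_0,P_3$ match the paper's exactly. Twisting that identity and comparing it with the $x$-rule for $\tau^4\s$ gives the second identity, and eliminating $\tau^3\s$ from the two yields the relation. Your substitution $\tau^3\s=P_3^{-1}(\cdots)$ amounts to the same linear combination as the paper's step of multiplying the first identity by $(x-\theta^{q^3})$ and subtracting the second.

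Your extra check that $C_2$ has $x^2$-coefficient $1/P_3^{q+1}\neq 0$ is a welcome addition, since the paper does not state nontriviality explicitly. One small slip: with your sign convention the $x$-part of your $P_2$ is $-1$, not $1$, but the product of the two $x$-parts is still $1$.
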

This proof is straightforward, so we postpone it to the appendix. 

\begin{remark}
After multiplying by a suitable non-zero scalar (if necessary), we may assume that
\[
    \Sgn^L(C_2) = \Sgn(x^2) = 1, \qquad \Sgn^L(C_0) = \Sgn(xy) = 1,
\]
where $\Sgn^L$ is the extended sign function on $ \mathbf{K}_L $. We emphasize that the case $P_3 = 0$ is not a valid condition for the above construction and will be excluded from our discussion.
\end{remark}
The proof of Proposition~\ref{prop:motive2} directly implies the following corollary.

\begin{cor}\label{cor:tau3}
    With the notation above, if $P_3 \neq 0$, then
    \[
        \tau^3 \s \in \mathcal{L}(2\infty)\, \tau^2 \s + \mathcal{L}(2\infty)\, \tau \s + \mathcal{L}(3\infty)\, \s.
    \]
\end{cor}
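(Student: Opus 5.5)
The plan is to read off the relation for $\tau^3\s$ from the Euclidean-division argument behind Proposition~\ref{prop:motive2}, working throughout in $M_\phi=L\{\tau\}\s$ with the $\A_L$-action $a\cdot \mathbf m=\phi_a\mathbf m$. Here $x$ acts by $\phi_x$, $y$ acts by $\phi_y$, and $L$ acts by left multiplication.

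The first step is to express $\tau^4\s$ through $\phi_x$. By \eqref{eq:phix2},
\[
\tau^4\s = (x-\theta)\s - g_1\tau\s - g_2\tau^2\s - g_3\tau^3\s .
\]
So $\tau^4\s$ lies in $\mathcal L(\infty)\s+L\tau\s+L\tau^2\s+L\tau^3\s$. Applying $\tau$ gives $\tau^5\s$ in terms of $\tau\s,\tau^2\s,\tau^3\s,\tau^4\s$ with coefficients in $\mathcal L(\infty)$ ($x$ commutes with $\tau$ on $M_\phi$, and $\tau c=c^q\tau$). Substituting the expression for $\tau^4\s$ then gives
\[
\tau^5\s\in \mathcal L(\infty)\tau^3\s+\mathcal L(\infty)\tau^2\s+\mathcal L(\infty)\tau\s+\mathcal L(2\infty)\s .
\]
Repeating once more for $\tau^6\s$, we land in $\mathcal L(2\infty)$-coefficients on $\tau^3\s$, and so on.

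The second step is to substitute these expressions into $y\s=\phi_y\s$ using \eqref{eq:phiy2}. After collecting terms, this yields an identity
\[
P_3\,\tau^3\s = (\text{$\mathcal L(2\infty)$-combination of }\tau^2\s,\tau\s) + (\text{element of }\mathcal L(3\infty))\s ,
\]
where the coefficient of $\tau^3\s$ comes out to $h_3-g_1^{q^2}-(h_5-g_3^{q^2})g_2^q-P_0g_3$ after the $x$-terms are cancelled against the $\phi_x$ relations. On the $\s$ coefficient, $y$ and $x^2$ have pole order at most $3$ (we have $\deg y=3$ and $\deg x^2=4$). If $x^2$ does appear, I will first eliminate the $x^2$ (or $x$) contribution on $\tau^3\s$ by subtracting an $x$-multiple of the $\phi_x$ relation. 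This is exactly the reduction that defines $P_0$ in the appendix. After that step the $\tau^3\s$ coefficient is the scalar $P_3$, and the remaining coefficients on $\tau^2\s,\tau\s$ lie in $\langle x^2,x,1\rangle$ or lower, while the $\s$ coefficient lies in $\langle y,x,1\rangle\subset\mathcal L(3\infty)$.

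The final step is to divide by $P_3\neq0$, which gives the corollary. The main obstacle is the bookkeeping of pole orders during this elimination. In particular, we need the $\s$-coefficient to stay inside $\mathcal L(3\infty)$ rather than $\mathcal L(4\infty)$, meaning no $x^2$ term survives. I would verify this by tracking the leading parts, since the normalization $\Sgn(\phi_y)=1$ makes the $y$-term dominate. I would rely on the explicit appendix computation of $P_0,\dots,P_3$ to confirm that the $x^2$ terms cancel.
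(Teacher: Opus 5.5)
Your route is the paper's. Substituting $\tau^4\s,\tau^5\s,\tau^6\s$ into $\phi_y\s=y\s$ is the right division
\[
\phi_y-\eta=\bigl(\tau^2+(h_5-g_3^{q^2})\tau+P_0\bigr)(\phi_x-\theta)+P_1\tau+P_2\tau^2+P_3\tau^3
\]
applied to $\s$. It yields exactly \eqref{eq:first}, and the paper then simply solves for $\tau^3\s$. However, your pole-order bookkeeping is wrong, and that is what makes the decisive step look like an obstacle.

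\textbf{Pole orders.} On an elliptic curve $\mathcal L(\infty)=L$, and $x$ has a pole of order $2$. So $(x-\theta)\notin\mathcal L(\infty)$. In particular the $\tau\s$-coefficient of $\tau^5\s$, namely $x-\theta^q+g_3^qg_1$, is not in $\mathcal L(\infty)$. Your sentence that $x^2$ has pole order at most $3$ is also false, since its order is $4$.

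\textbf{No growth in $x$.} The $\tau^2\s$- and $\tau^3\s$-coefficients of $\tau^5\s$ are the constants $g_3^qg_2-g_1^q$ and $g_3^{q+1}-g_2^q$. Hence $\tau^6\s$ contains $\tau^4\s$ only with a constant coefficient. Its $\tau^3\s$-coefficient is the constant $g_3^{q^2}g_2^q-g_1^{q^2}-(g_3^{q^2+q}-g_2^{q^2})g_3$, not an element of $\mathcal L(2\infty)$ that needs reducing. Structurally, the quotient above has constant coefficients, so each $\tau^i$ in it contributes only $(x-\theta^{q^i})\tau^i\s$. Collecting terms gives directly
\[
P_3\tau^3\s=-\bigl(P_2+x-\theta^{q^2}\bigr)\tau^2\s-\bigl(P_1+(h_5-g_3^{q^2})(x-\theta^q)\bigr)\tau\s-\bigl(P_0(x-\theta)-(y-\eta)\bigr)\s .
\]
Here $P_3\in L$, the coefficients of $\tau^2\s$ and $\tau\s$ are linear in $x$, and the coefficient of $\s$ lies in $\langle x,y,1\rangle_L\subset\mathcal L(3\infty)$. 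No $x^2$ ever appears, and the $\tau^3\s$-coefficient equals $P_3$ with no cancellation against the $\phi_x$ relation.

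\textbf{The extra elimination step.} The step you plan is unnecessary, and as described it would not work. $P_0$ is the constant term of the quotient, not the result of ``subtracting an $x$-multiple of the $\phi_x$ relation''. Such a subtraction introduces $x\tau^4\s$ and $x^2\s$. More generally, an $x$-term on $\tau^3\s$ cannot be removed this way, because $x\tau^3\s=\tau^3\phi_x\s$ brings back $\tau^7\s$. Eliminating $\tau^3\s$ at the cost of a factor $(x-\theta^{q^3})$ is exactly what the paper does later to reach the quadratic relation, and it raises pole orders.

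Your argument goes through only because the $\tau^3\s$-coefficient is automatically the scalar $P_3$. That is the fact you should state and verify, rather than defer to the appendix as a cancellation to be confirmed.
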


\begin{proof}
    This follows immediately from \eqref{eq:first} by solving for $\tau^3 \s$:
    \begin{align*}
        P_3 \tau^3 \s =&{} -\bigl(P_2 + (x - \theta^{q^2})\bigr) \tau^2 \s
        - \bigl(P_1 + (h_5 - g_3^{q^2})(x - \theta^q)\bigr) \tau \s \\
        &{} - \bigl(P_0 (x - \theta) - (y - \eta)\bigr) \s,
    \end{align*}
    and noting that $x, y$ have poles of order $2,3$ respectively at $\infty$.
\end{proof}

We can now describe the structure of the Anderson motive associated to $\phi$.
\begin{cor}\label{cor:spaceMphi}
    If $P_3 \neq 0$, then the Anderson motive of $\phi$ is generated as an $L\{\tau\}$-module by $\s$, and moreover
    \[
        M_\phi = \A_L \s + \A_L \tau \s + \A_L \tau^2 \s.
    \]
    If $P_3 = 0$, then $M_\phi$ is generated by $\s, \tau \s, \tau^2 \s, \tau^3 \s$.
\end{cor}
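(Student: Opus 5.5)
The plan is to compare two descriptions of $M_\phi$: as a left $L\{\tau\}$-module, $M_\phi = L\{\tau\}\s = \bigoplus_{i\ge 0} L\,\tau^i\s$, and as an $\A_L$-module, where $x$ and $y$ act through $\phi_x$ and $\phi_y$. Concretely, $x\cdot \tau^i\s = \tau^i\phi_x\s$ and $y\cdot\tau^i\s=\tau^i\phi_y\s$. Let $N := \A_L\s+\A_L\tau\s+\A_L\tau^2\s$. The inclusion $N\subseteq M_\phi$ is trivial, so all the work is in the reverse inclusion. Since $N$ is an $\A_L$-submodule containing $\s,\tau\s,\tau^2\s$, it suffices to show $\tau^n\s\in N$ for every $n\ge 0$. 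I will argue by induction on $n$, with the cases $n\le 2$ holding by definition.

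The inductive engine is Corollary~\ref{cor:tau3}. When $P_3\neq0$, it gives
\[
\tau^3\s = a_2\,\tau^2\s + a_1\,\tau\s + a_0\,\s,\qquad a_i\in \A_L .
\]
I need a version of this for higher $n$. Applying $\tau^{n-3}$ on the left is legitimate because $\tau$ commutes with the $\A_L$-action up to twisting coefficients: for $a\in\A_L$, $\tau(a\,m) = a^{(1)}\tau m$. This holds because the $\A$-action commutes with $\tau$ (it is left multiplication in $L\{\tau\}$ composed with right multiplication by $\phi_a$), while $L$-scalars twist. Hence
\[
\tau^{n}\s = a_2^{(n-3)}\tau^{n-1}\s + a_1^{(n-3)}\tau^{n-2}\s + a_0^{(n-3)}\tau^{n-3}\s .
\]
By the induction hypothesis the right side lies in $N$, because $N$ is an $\A_L$-module. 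This proves the first claim. The fact that $M_\phi$ is generated by $\s$ as an $L\{\tau\}$-module holds by construction.

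For $P_3=0$ I would go back to relation \eqref{eq:first} from the appendix. That relation expresses a combination $P_3\tau^3\s+(\cdots)\tau^2\s+\cdots$ in terms of $(x-\theta)\s$ and $(y-\eta)\s$. It comes from comparing $\phi_y\s$ with suitable $\tau$-combinations of $\phi_x\s$. The basic idea is a degree argument in $\tau$: $\phi_x\s$ has leading term $\tau^4\s$, so $\tau^4\s \in \A_L\s + \sum_{i\le 3}L\tau^i\s$. By the same twisting induction, every $\tau^n\s$ with $n\ge4$ lies in $\sum_{i=0}^{3}\A_L\tau^i\s$. This gives generation by $\s,\tau\s,\tau^2\s,\tau^3\s$ without using \eqref{eq:first} at all.

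The step most likely to need care is justifying the twisted commutation $\tau(a m)=a^{(1)}\tau m$ within the paper's conventions. In particular, I need that the Frobenius twist on $\mathbf K_L$ fixes $x,y$, so $\tau$ commutes with $\phi_x,\phi_y$ acting on the right. Once that is in place, the rest is formal.
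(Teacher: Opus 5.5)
Your proposal is correct and follows essentially the same route as the paper: both rest on Corollary~\ref{cor:tau3} together with the semilinearity $\tau(a m)=a^{(1)}\tau m$ for $a\in\A_L$, and your $P_3=0$ argument via $\phi_x\s=x\s$ is exactly the paper's. The only difference is cosmetic: you apply $\tau^{n-3}$ to the $\tau^3$-relation and use strong induction, whereas the paper applies $\tau$ to the expression of $\tau^k\s$ in terms of $\s,\tau\s,\tau^2\s$, after a detour through $\tau^4\s,\tau^5\s$ via $\phi_x$ that your version shows is unnecessary.
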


\begin{proof}
    It suffices to show that every $\tau^i \s$ lies in the $\A_L$-module generated by $\s, \tau\s, \tau^2\s$ (and additionally $\tau^3\s$ when $P_3 = 0$). We treat the two cases separately.

    \textbf{Case 1: $P_3 \neq 0$.} By Corollary~\ref{cor:tau3}, we have
    \[
        \tau^3 \s \in \A_L \s + \A_L \tau \s + \A_L \tau^2 \s.
    \]
    From the defining relation $\phi_x \s = x\s$, we obtain
    \[
        \tau^4 \s = (x - \theta)\s - g_1 \tau \s - g_2 \tau^2 \s - g_3 \tau^3 \s,
    \]
    hence $\tau^4 \s$ also lies in the same $\A_L$-span. Applying $\tau$ to both sides of $(x - \theta)\s = \tau^4 \s + g_1\tau\s + g_2\tau^2\s + g_3\tau^3\s$, we get
    \[
        (x - \theta^q)\tau \s = \tau^5 \s + g_1^q \tau^2 \s + g_2^q \tau^3 \s + g_3^q \tau^4 \s,
    \]
    which implies
    \[
        \tau^5 \s = (x - \theta^q)\tau \s - g_1^q \tau^2 \s - g_2^q \tau^3 \s - g_3^q \tau^4 \s \in \A_L \s + \A_L \tau \s + \A_L \tau^2 \s.
    \]
    Now suppose inductively that $\tau^k \s \in \A_L \s + \A_L \tau \s + \A_L \tau^2 \s$ for some $k \geqslant 4$. Applying $\tau$ to the relation expressing $\tau^k \s$ in terms of $\s, \tau\s, \tau^2\s$, we obtain an expression for $\tau^{k+1} \s$ in terms of $\tau\s, \tau^2\s, \tau^3\s, \tau^4\s$. Since $\tau^3\s$ and $\tau^4\s$ are already known to lie in the span, it follows that $\tau^{k+1}\s$ also lies in the span. By induction, all higher powers are contained in $\A_L \s + \A_L \tau \s + \A_L \tau^2 \s$. Therefore
    \[
        M_\phi = \A_L \s + \A_L \tau \s + \A_L \tau^2 \s.
    \]

    \textbf{Case 2: $P_3 = 0$.} In this case, equation \eqref{eq:first} gives a linear relation among $\s, \tau\s, \tau^2\s$ with coefficients in $\A_L$. This does not directly provide $\tau^3\s$ in the desired span. However, from $\phi_x \s = x\s$, we have
    \[
        \tau^4 \s = (x - \theta)\s - g_1 \tau \s - g_2 \tau^2 \s - g_3 \tau^3 \s,
    \]
    so $\tau^4 \s \in \A_L \s + \A_L \tau \s + \A_L \tau^2 \s + \A_L \tau^3 \s$. Applying $\tau$ repeatedly as in Case 1, we conclude by induction that every $\tau^k \s$ for $k \geqslant 4$ lies in the $\A_L$-span of $\s, \tau\s, \tau^2\s, \tau^3\s$. Hence
    \[
        M_\phi = \A_L \s + \A_L \tau \s + \A_L \tau^2 \s + \A_L \tau^3 \s.
    \]
    This completes the proof.
\end{proof}

\subsection{Wedge Product of Motive}

Our next goal is to find explicit expressions for \( C_0, C_1, C_2 \).
The technique lemma \ref{lem:motiverelation} provides deep relation of these coefficients with the geometry invariants of Elliptic curve.
Our ideal comes from the construction of Weil pairings \cite{vdHGJ04}, along which there exists a rank-one Drinfeld module
\[
M_{\psi} = M_{\phi} \wedge M_{\phi} = L\{ \tau \}\cdot S_{0} = V^{-1} \A_L \cdot S_0,
\]
where
\[
S_0 =  h \s \wedge \tau \s .
\]
Note that \( S_0 \) is unique up to multiplication by \( \mathbb{F}_q^\times \), so we may identify \( S_0 \) with the generator \( S_\psi =1 \) as in Equality \eqref{eq:phiS0=1}. Choosing the multiplier $s = -1 $ for the shtuka function $f $, we obtain the motive relation
\[
\tau S_0 = f S_0 = - f_{\alpha} S_0.
\]
Notice that this motive relation give rises to the non-normalized rank-one Drinfeld module. 

\begin{lem}\label{lem:motiverelation}
In the motive \( M_\phi \), suppose that the following motive relation holds:
\[
C_2 \tau^2 \s = C_0   \s  + C_1 \tau \s,
\]
where \( C_0, C_1, C_2 \) have the same monomials as those in Proposition~\ref{prop:motive2}.
Then the following statements hold:
\begin{enumerate}
    \item \[ 
\frac{C_0}{C_2} = \frac{f_\alpha h}{h^{(1)}}.
\]
\item \( ( h )  = V + \infty - \dot{V} - \ddot{V}\), where \( \dot{V} \) and \( \ddot{V}\) satisfy
\[
\dot{V} \dotplus \ddot{V} = V.
\]
(In some special cases one may have \( \ddot{V} = \dot{V}\).)
\item Moreover, \(\frac{1}{h} = \delta f_{\alpha} + \gamma\) for some constants \( \gamma \) and \( \delta \in \mathbb{F}_q^\times\). (Without loss of generality, we may assume \(\delta = 1\).)
\item \( (C_2)_0 =  \dot{V}^{\vee}+ \ddot{V}^{\vee} + \dot{V}+ \ddot{V}    \).
\item \( (C_1)_0 \) contains both \( \dot{V}^\vee \) and \( \ddot{V}^\vee \).
\item  \( (C_0)_0 = \xi + \dot{V}^{\vee}+ \ddot{V}^{\vee} + \dot{V}^{(1)}+ \ddot{V}^{(1)}  \).
\end{enumerate} 
\end{lem}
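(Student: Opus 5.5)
The plan is to compute the wedge square of the motive relation and compare divisors. From $C_2\tau^2\s = C_1\tau\s + C_0\s$, wedge with $\tau\s$:
\[
C_2\,\tau\s\wedge\tau^2\s = C_0\,\tau\s\wedge\s = -C_0\,\s\wedge\tau\s .
\]
On the other hand, $\tau(\s\wedge\tau\s)=\tau\s\wedge\tau^2\s$. Since $S_0=h\,\s\wedge\tau\s$ and $\tau S_0=-f_\alpha S_0$, we get
\[
h^{(1)}\,\tau\s\wedge\tau^2\s = \tau S_0 = -f_\alpha h\,\s\wedge\tau\s .
\]
Hence $C_2\cdot(-f_\alpha h/h^{(1)}) = -C_0$ in the rank-one free $\mathbf K_L$-module $\mathbf K_L\,\s\wedge\tau\s$, which gives (1): $C_0/C_2 = f_\alpha h/h^{(1)}$.

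Here $h$ is \emph{defined} as the element of $\mathbf K_L$ with $S_0=h\,\s\wedge\tau\s$, where $S_0$ is the generator of $M_\phi\wedge M_\phi\cong V^{-1}\A_L$. So the next task is to read off $(h)$. The element $\s\wedge\tau\s$ lies in $M_\phi\wedge M_\phi$, and under the identification with $V^{-1}\A_L$ it equals $h^{-1}$. Thus $h^{-1}\in V^{-1}\A_L$, so $(h^{-1})\ge -V+(\text{poles at }\infty)$.

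The pole order of $h^{-1}$ at $\infty$ is governed by degrees. For the $\tau$-filtration, $\s\wedge\tau\s$ is the lowest wedge, and the $\Sgn$/degree bookkeeping (ranks: $\s$ degree $0$, $\tau\s$ degree $1/2$ relative to $x$) suggests it corresponds to an element of $\mathcal L(V+\infty)$ not in $\mathcal L(V)$. By the Riemann--Roch description $\mathcal L(V+\infty)=\mathrm{Span}_L(1,f)$, this gives $h^{-1}=\delta f_\alpha+\gamma$, which is (3). Normalising signs forces $\delta\in\mathbb F_q^\times$, and $\delta\neq0$ because otherwise $\s\wedge\tau\s$ would be a constant multiple of $S_0$, contradicting the relation $\tau S_0=-f_\alpha S_0$ combined with (1) and the degree of $C_0/C_2$.

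Then $h^{-1}$ has a simple pole at $V$ and at $\infty$, so it has two zeros $\dot V,\ddot V$. Abel's theorem gives $\dot V\dotplus\ddot V\dotplus V^\vee=\infty$, i.e.\ $\dot V\dotplus\ddot V=V$, proving (2). This step, pinning down that $\s\wedge\tau\s$ sits exactly in $\mathcal L(V+\infty)$, is where I expect the main difficulty; I would try to justify it by computing the degree of $\s\wedge\tau\s$ inside $V^{-1}\A_L\cong\bigoplus L\,ff^{(1)}\cdots$ via the $L\{\tau\}$-span, using that $M_\phi\wedge M_\phi$ is generated by $\s\wedge\tau\s$ over $L\{\tau\}$.

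For (4)--(6), substitute the divisor of $h$ into (1):
\[
(C_0)-(C_2) = (f_\alpha)+(h)-(h^{(1)}) = V^{(1)}+\xi-V-\infty + V+\infty-\dot V-\ddot V - V^{(1)}-\infty+\dot V^{(1)}+\ddot V^{(1)},
\]
which simplifies to $\xi+\dot V^{(1)}+\ddot V^{(1)}-\dot V-\ddot V-\infty$. By Proposition~\ref{prop:motive2}, $C_2\in\langle x^2,x,1\rangle_L$ has divisor $(C_2)_0-4\infty$, and it is invariant under $y\mapsto$ the conjugate, so its zero set is closed under $\vee$. Likewise $C_0$ has pole order $5$ (leading term $xy$). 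Since $C_2$ has only four zeros and $C_0$ must vanish at $\xi,\dot V^{(1)},\ddot V^{(1)}$, the difference forces $\dot V,\ddot V\in(C_2)_0$. Then $\vee$-symmetry gives $(C_2)_0=\dot V+\ddot V+\dot V^\vee+\ddot V^\vee$, which is (4). Consequently $(C_0)_0=\xi+\dot V^{(1)}+\ddot V^{(1)}+\dot V^\vee+\ddot V^\vee$, which is (6).

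Finally, for (5), evaluate the motive relation at $P=\dot V^\vee$ (similarly $\ddot V^\vee$). Both $C_2$ and $C_0$ vanish there, so $C_1(P)\,\tau\s$ must specialise to $0$ in the fibre $M_\phi/\mathfrak m_P M_\phi$. Since $\s,\tau\s$ are independent in that fibre away from $\xi$ (using Corollary~\ref{cor:spaceMphi}, the fibre is $2$-dimensional spanned by their images), we conclude $C_1(P)=0$.
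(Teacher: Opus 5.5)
Your derivation of (1) is correct and is the paper's computation, and your treatment of (4) and (6) matches the paper once (2) is available. The gap is in (2)--(3), on which everything after (1) rests. The only input you actually use is $\s\wedge\tau\s\in M_\phi\wedge M_\phi$, i.e.\ $h^{-1}\in V^{-1}\A_L$. This gives $\ord_\infty h\ge 0$ and bounds the finite zeros of $h$ by $V$. It does not force $h^{-1}$ to have a pole at $\infty$, and it does not bound that pole by $1$. Concretely, $h^{-1}=x-c$, $C_2=(x-c)(x-\alpha)$, $C_0=(y-\eta-m(x-\theta))(x-c^q)$ is consistent with (1), with $h^{-1}\in V^{-1}\A_L$, and with the prescribed monomials and signs.

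The ``degree bookkeeping'' is not an argument, and the justification you propose is false. If $\s\wedge\tau\s$ generated $M_\phi\wedge M_\phi=L\{\tau\}S_0$ over $L\{\tau\}$, it would be an $L^\times$-multiple of $S_0$, so $h$ would be constant, contradicting your own $\delta\neq0$. In fact $\s\wedge\tau\s=h^{-1}S_0=(m-\kappa-\tau)S_0$ is not a generator; this is exactly where the elliptic case differs from $\mathbb{F}_q[t]$. Your reason for $\delta\neq0$ also fails: for constant $h$, (1) reads $C_0/C_2=c f_\alpha$, and both sides have $\ord_\infty=-1$, so no contradiction arises.

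What is missing is control of how far $\s\wedge\tau\s$ is from generating. The paper gets it from Corollary~\ref{cor:spaceMphi}. Since $M_\phi=\A_L\s+\A_L\tau\s+\A_L\tau^2\s$, the three wedges generate $M_\phi\wedge M_\phi$ over $\A_L$, so $V^{-1}\A_L=h^{-1}\bigl(1,\,C_1/C_2,\,C_0/C_2\bigr)_{\A_L}$. Comparing maximal $\ord_\infty$ on both sides gives $\ord_\infty h=1$. Then $(h)_0=V+\infty$, since a function on $E$ cannot have a single simple pole.

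Your degree intuition can also be made rigorous by an index count:
\begin{itemize}
\item Since $\deg_\tau\phi_a=2\,\mathrm{pole}_\infty(a)$, an element $a\s+b\tau\s$ with $a,b\in\A_L$ has $\tau$-degree $\max\{2\,\mathrm{pole}_\infty(a),\,2\,\mathrm{pole}_\infty(b)+1\}$, with no cancellation.
\item Hence exactly the degrees $2$ and $3$ are missed, and $\dim_L M_\phi/(\A_L\s\oplus\A_L\tau\s)=2$.
\item Passing to determinants, $\dim_L V^{-1}\A_L/h^{-1}\A_L=2$, while the left side equals $1+\ord_\infty h$. So $\ord_\infty h=1$.
\end{itemize}

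Finally, in (5) the independence of $\s,\tau\s$ in the fibre at $P$ does not follow from Corollary~\ref{cor:spaceMphi}, and ``away from $\xi$'' is the wrong locus. The elements $\s,\tau\s$ degenerate exactly at the zeros $\dot V,\ddot V$ of $h^{-1}$. Once (2) is proved this is repairable, since generically $\dot V^\vee\notin\{\dot V,\ddot V\}$. The paper instead reads (5) off from $C_1/(C_2h)\in V^{-1}\A_L$.
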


\begin{proof}
As an Anderson motive, we have
\[
 M_{\phi} \wedge M_{\phi} = ( \A_L + f_{\alpha} \A_L ) S_0. 
\]

(1) We have
\[
\tau (\s \wedge \tau \s) = \tau \s \wedge \tau^2 \s.
\]
Also,
\[
\frac{1}{h^{(1)}} \tau S_0 = -\frac{1}{h^{(1)}} f_{\alpha } S_0 = -\frac{C_0}{C_2 h } S_0,
\]
hence
\[
\frac{C_0}{C_2 h } =\frac{1}{h^{(1)}} f_{\alpha }.
\]

(2) From
\[
M_\phi = \A_L \s + \A_L \tau \s + \A_L \tau^2 \s,
\]
we have
\[
M_{\phi}\wedge M_{\phi} = \langle \s \wedge \tau \s,\s \wedge \tau^2 \s, \tau \s \wedge \tau^2 \s  \rangle_L.
\]
Notice that
\[
\s \wedge \tau \s = \frac{1}{h } S_0,\qquad
\s \wedge \tau^2 \s = \frac{C_1}{C_2 h } S_0,\qquad
\tau^2 \s \wedge \tau \s = \frac{C_0}{C_2 h } S_0.
\]
Therefore,
\[
M_\psi = \left( \frac{1}{h }, \frac{C_1}{C_2 h } , \frac{C_0}{C_2 h } \right)_{\A_L} = V^{-1} \A_L.
\]

The maximal valuations with respect to the valuation at \(\infty\) on both sides must coincide. We have
\[
\max \{ \ord_\infty (a) \mid a \in  V^{-1} \A_L  \} =0,
\]
and
\[
\max \{ \ord_\infty (a) \mid a \in M_\psi \}
= \max \{ \ord_\infty (a) \mid a \in h M_\psi \} - \ord_{\infty} h
= 1 - \ord_\infty h,
\]
where the maximal element \(a\) such that
\[
\max \{ \ord_\infty (a) \mid a \in h M_\psi \} = 1
\]
is given by
\[
a= \frac{C_1}{C_2  } \big|_{\infty} - \frac{C_1}{C_2}.
\]
Hence \(\ord_\infty h = 1\), i.e. \(\infty\) is a zero of \(h\).

On the other hand,
\[
\frac{1}{h }, \frac{C_1}{C_2 h }, \frac{C_0}{C_2 h}
\]
generate \( V^{-1} \A_L \). This implies that
\[
(h)_{0} - (h)_{\infty} \leqslant V+ k \infty
\]
for some integer \(k\). Therefore, \((h)_0 = V + \infty\) or \((h)_0 = \infty\). The latter is impossible, since it would force \(h \in \mathcal{L}(\infty)=\langle 1\rangle_L\), a contradiction. 
Thus, \((h)_0 = V + \infty\).

Hence we may assume that the poles of \(h\) are \(\dot{V}\) and \(\ddot{V}\), and since \((h)\) is a principal divisor, we get \(\dot{V} \dotplus \ddot{V} = V\).

(3) Since \(\frac{1}{h} \in \mathcal{L}(V + \infty)\), we have
\[
 \frac{1}{h} = \delta f_\alpha +  \gamma.
\]
Using the sign function,
\[
\Sgn^L \left(\frac{1}{h}\right)^{q-1} = 1,
\]
we obtain
\[
\Sgn^L \left(\frac{1}{h}\right) \in \mathbb{F}_q^\times.
\]

(4) From (1), we know
\[
\frac{C_0}{C_2 h } =\frac{1}{h^{(1)}} f_{\alpha }.
\]
Considering the divisor equality
\[
(h)_{0}^{(1)} - (h)_{\infty}^{(1)} + V + \infty - V^{(1)} - \xi =  (h)_{0} - (h)_{\infty} + (C_2)_0 - (C_0)_{0} + \infty,
\]
we obtain
\[
\dot{V} + \ddot{V} - \dot{V}^{\vee} - \ddot{V}^{\vee} = (C_2)_0 - (C_0)_{0}.
\]
Thus
\[
\dot{V} + \ddot{V} \in (C_2)_0.
\]
The desired conclusion follows from the fact that \(\dot{V}^{\vee } \neq \ddot{V}\).

(5) Since
\[
\left( \frac{1}{h }, \frac{C_1}{C_2 h } , \frac{C_0}{C_2 h } \right)_{\A_L} = V^{-1} \A_L,
\]
we have
\[
(h)+ (C_2)_0 - (C_1)_{0} \leqslant V+ k \infty
\]
for some integer \(k\). Substituting the results from (1) and (3) into \((h)\) and \((C_2)_0\) respectively gives
\[
\dot{V}^{\vee} + \ddot{V}^{\vee} \leqslant  (C_1)_{0}.
\]

(6) As a consequence, we obtain
\[
\left(\frac{C_0}{C_2}\right) = \left(\frac{f h }{h^{(1)}}\right) = \zeta - \infty - \dot{V} - \ddot{V} + \dot{V}^{(1)} + \ddot{V}^{(1)}.
\]
Thus
\[
( C_0)_0 - 5 \infty =   \left(\frac{C_0}{C_2}\right) + (C_2)  = \zeta  + \dot{V}^\vee + \ddot{V}^\vee  + \dot{V}^{(1)} + \ddot{V}^{(1)} - 5 \infty.
\]
This yields
\[
( C_0)_0   = \zeta  + \dot{V}^\vee + \ddot{V}^\vee  + \dot{V}^{(1)} + \ddot{V}^{(1)}.
\]
\end{proof}
\begin{remark}
From now on, we fix the coefficient $\delta $ of $ h $ to be $1$. In other words, $ \Sgn(h) =1  $.
\end{remark}

\begin{lem}\label{lem:P3}
The coefficient \( P_3 \) in Proposition~\ref{prop:motive2} cannot be zero.
\end{lem}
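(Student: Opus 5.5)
Argue by contradiction: assume $P_3=0$ and show that the rank of $M_\phi$ drops, or that the wedge structure contradicts Lemma~\ref{lem:motiverelation}.

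\textbf{Step 1: the relation forced by $P_3=0$.} If $P_3=0$, equation \eqref{eq:first} from the proof of Proposition~\ref{prop:motive2} loses its $\tau^3\s$ term. It becomes an $\A_L$-linear relation
\[
\bigl(P_2+(x-\theta^{q^2})\bigr)\tau^2\s+\bigl(P_1+(h_5-g_3^{q^2})(x-\theta^q)\bigr)\tau\s+\bigl(P_0(x-\theta)-(y-\eta)\bigr)\s=0 .
\]
The coefficient of $\tau^2\s$ lies in $\mathcal L(2\infty)$ and has leading term $x$. The coefficient of $\tau\s$ lies in $\mathcal L(2\infty)$. The coefficient of $\s$ has leading term $-y$, so it has a pole of exact order $3$ at $\infty$. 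In particular this relation is nontrivial.

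\textbf{Step 2: compare with the sign-normalized relation.} This relation has the shape $C_2'\tau^2\s=C_1'\tau\s+C_0'\s$, but with $\deg C_2'=2$ coming from $x$ rather than $x^2$, and $\deg C_0'=3$ rather than $5$. Since $M_\phi$ is projective of rank two over $\A_L$, any two such relations among $\s,\tau\s,\tau^2\s$ are $\mathbf K_L$-proportional. The argument of Lemma~\ref{lem:motiverelation} applies verbatim to any such relation: it only uses wedging with $S_0=h\,\s\wedge\tau\s$ and the identity $\tau S_0=-f_\alpha S_0$. It gives
\[
\frac{C_0'}{C_2'}=\frac{f_\alpha h}{h^{(1)}},
\]
where $(h)=V+\infty-\dot V-\ddot V$ by part (2) of that lemma.

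\textbf{Step 3: derive the contradiction by divisor counting.} Compute $\ord_\infty$ of both sides.
\begin{itemize}
\item On the right, $f_\alpha$ has a simple pole at $\infty$, while $h$ and $h^{(1)}$ each have a simple zero there. So $\ord_\infty(f_\alpha h/h^{(1)})=-1$.
\item On the left, $\ord_\infty(C_0'/C_2')=-3+2=-1$. The orders agree, so this alone gives no contradiction.
\end{itemize}
Next use zero divisors as in parts (4)--(6) of Lemma~\ref{lem:motiverelation}. The divisor of $C_0'/C_2'$ must equal
\[
\xi-\infty-\dot V-\ddot V+\dot V^{(1)}+\ddot V^{(1)}.
\]
Hence $(C_2')_0$ must contain $\dot V+\ddot V$. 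But $C_2'$ has only two zeros, and they are of the form $P,P^\vee$ because $C_2'$ is a function of $x$ alone. Therefore $\ddot V=\dot V^\vee$, so $V=\dot V\dotplus\ddot V=\infty$. This contradicts the fact that $V$ is an affine point ($M_\psi=V^{-1}\A_L\neq\A_L$).

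If instead $C_2'$ is proportional to $x-c$ with $\dot V,\ddot V$ forced to be a conjugate pair, the same conclusion follows.

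\textbf{Main obstacle.} The delicate step is Step 2: justifying that the wedge argument applies when $\s,\tau\s,\tau^2\s$ do not generate $M_\phi$ (in this case Corollary~\ref{cor:spaceMphi} needs $\tau^3\s$ as well). I would handle this by using only the divisor identity for $C_0'/C_2'$. That identity comes purely from $\tau(\s\wedge\tau\s)=\tau\s\wedge\tau^2\s$ and does not require the generation statement. Part (2) on the divisor of $h$ must still be imported, and I would obtain it from the structure of $M_\psi$ rather than from $C_1$.
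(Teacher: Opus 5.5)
Your proposal follows essentially the same route as the paper. Both assume $P_3=0$, use the $\tau^3$-free relation coming from \eqref{eq:first}, and invoke the wedge identity $C_0/C_2=f_\alpha h/h^{(1)}$ of Lemma~\ref{lem:motiverelation}. This forces the zeros of the $\tau^2\s$-coefficient to contain the poles $\dot V,\ddot V$ of $h$, and a coefficient linear in $x$ cannot do this when $\dot V\dotplus\ddot V=V\neq\infty$.

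The only difference is presentational. The paper first multiplies by an auxiliary factor $(x-x_0)$ to fit the quadratic template of Lemma~\ref{lem:motiverelation}, then appeals to the arbitrariness of $x_0$. You work with the linear coefficient directly and spell out the final step ($\ddot V=\dot V^\vee$, hence $V=\infty$), which the paper leaves implicit.

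You flag as your main obstacle the divisor of $h$ when $\s,\tau\s,\tau^2\s$ need not generate $M_\phi$. The paper handles this point only by appealing to ``the same arguments'' as in Lemma~\ref{lem:motiverelation}, so your proposal is at the same level of rigor there.
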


\begin{proof}
Assume, for contradiction, that \( P_3 = 0 \). From Equation~\eqref{eq:P3}, we can rewrite the motive relation as
\[
B_2 \tau^2 \s = B_1 \tau \s + B_0 \s,
\]
where
\begin{align*} 
     B_2 &=  P_2 + (x - \theta^{q^2}), \\
     B_1 &= - \left( P_1 + (h_5 - g_3^{q^2}) (x - \theta^q) \right), \\ 
     B_0 &=   (y - \eta) - P_0 (x - \theta).
\end{align*}
Multiplying by \((x - x_0)\), we obtain \(C_i = (x - x_0) B_i\), and then
\begin{equation} 
         C_2 \tau^2 \s = C_1  \tau \s + C_0 \s. 
\end{equation}
By Corollary~\ref{cor:spaceMphi}, we have
\[
M_\phi  = \langle \s , \tau \s, \tau^2 \s , \tau^3 \s  \rangle_L,
\]
and
\[
M_\psi := M_\phi \wedge  M_\phi = \bigoplus_{0 \leqslant i<j \leqslant 3 } \A_L \, \tau^i \s \wedge \tau^j \s .
\]
Hence
\[
M_\psi / \tau  M_\psi  =    \langle \s \wedge \tau \s ,  \s \wedge \tau^2 \s, \s \wedge \tau^3 \s \rangle_L.
\]
On the other hand,
\[
V^{-1} S_0/ \tau V^{-1} S_0 =  \langle S_0 \rangle_L.
\]
Using the same arguments in Lemma \ref{lem:motiverelation}, it follows that \(h\) has a zero at \(\infty\) of multiplicity one.

Analogue to Lemma \ref{lem:motiverelation}, the divisor of the function
\[
C_2 = (x - x_0)(x + P_2 - \theta^{q^2})
\]
is given by 
\[
(C_2) = P_0+P_0^\vee + P_1 + P_1^\vee  - 4 \infty,
\]
for some place \(P_0\) with \(x\)-coordinate \(x_0\) and another fixed place \(P_1\) with \(x\)-coordinate \((\theta^{q^2}-P_2) \). This would imply that \(h\) has poles \(P_0\) and \(P_1\) such that \( P_0 + P_1 = V\), which is impossible because \(x_0\) is arbitrary. This contradiction proves the lemma.
\end{proof}

\subsection{Explicit Expressions}
Recall that the equation of Elliptic curve:
\[
    E: y^2 + a_1 x y + a_3 y = x^3 + a_2 x^2 + a_4 x + a_6, \quad a_i \in  \mathbb{F}_q.
\]
Set
\[
    \dot{V} = (\dot{\alpha}, \dot{\beta} ) , \quad \ddot{V} = (\ddot{\alpha}, \ddot{\beta} ).
\]
It is evident that
\[
   \dot{V}^{\vee} = (\dot{\alpha}, \dot{\beta}^{\vee} ) ,\quad 
   \ddot{V}^{\vee} = (\ddot{\alpha}, \ddot{\beta}^{\vee} ),\quad 
   V^{\vee} = (\alpha, \beta^{\vee} ) = (\alpha, -\beta - a_1 \alpha - a_3 ).
\]

\begin{lem}
Denote by \(\mathfrak{L}(x,y)=0\) the equation of the line through \( V^{\vee} \), \( \dot{V} \) and \( \ddot{V} \). If \( \kappa \) is the slope of this line, then
\[
\frac{1}{h} = f_\alpha + m - \kappa .
\]
\end{lem}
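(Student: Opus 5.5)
The plan is to start from Lemma~\ref{lem:motiverelation}(3) with the normalization $\delta=1$. This gives $\frac{1}{h}=f_\alpha+\gamma$ for some constant $\gamma\in L$, so what remains is to show $\gamma=m-\kappa$. I will identify the numerator of $f_\alpha+\gamma$ as the equation of a line and compare it with $\mathfrak{L}$.

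First, put everything over the common denominator $x-\alpha$:
\[
\frac{1}{h}=\frac{y-\eta-m(x-\theta)+\gamma(x-\alpha)}{x-\alpha}.
\]
Lemma~\ref{lem:m2}(2) gives $\eta=\beta^\vee-m(\alpha-\theta)$. Substituting, the numerator becomes
\[
N(x,y):=y-\beta^\vee-(m-\gamma)(x-\alpha).
\]
This is the equation of a non-vertical line of slope $m-\gamma$ through $V^\vee=(\alpha,\beta^\vee)$.

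Next, compare divisors. The divisor of $N$ is $V^\vee+P+Q-3\infty$, where $P,Q$ are the two further intersection points of this line with $E$. The divisor of $x-\alpha$ is $V+V^\vee-2\infty$. Hence
\[
\left(\tfrac1h\right)=P+Q-V-\infty.
\]
By Lemma~\ref{lem:motiverelation}(2), $(1/h)=\dot V+\ddot V-V-\infty$, so $\{P,Q\}=\{\dot V,\ddot V\}$ as divisors. Thus the line $N=0$ passes through $V^\vee$, $\dot V$ and $\ddot V$, counted with multiplicity.

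Since a line meeting $E$ in a given effective divisor of degree $3$ is unique, this line must be $\mathfrak{L}$. Its slope is therefore $\kappa=m-\gamma$, that is $\gamma=m-\kappa$, which is the claim. As a side remark, the fact that the coefficient of $y$ in $N$ is $1$ shows $\mathfrak{L}$ is not vertical, so $\kappa$ is a well-defined element of $L$.

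The step I expect to need the most care is the uniqueness of the line in degenerate configurations: $\dot V=\ddot V$, or one of $\dot V,\ddot V$ coinciding with $V^\vee$. In these cases "the line through $V^\vee,\dot V,\ddot V$" must be read as the line cutting out the divisor $V^\vee+\dot V+\ddot V$, i.e. a tangent line where points coincide. I will handle this by noting that two distinct non-vertical lines $y=c_1x+d_1$ and $y=c_2x+d_2$ meet $E$ in divisors whose difference is the divisor of $\frac{y-c_1x-d_1}{y-c_2x-d_2}$, a nonconstant function. This difference cannot vanish, so the intersection divisor determines the line, including its slope $\kappa$.
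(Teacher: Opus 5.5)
Your proof is correct and is essentially the paper's argument: both rest on identifying $h$ with $(x-\alpha)/\mathfrak{L}$ through the divisor $V+\infty-\dot V-\ddot V$, and then using $\eta-m\theta=\beta^\vee-m\alpha$ to get $\frac{1}{h}-f_\alpha=m-\kappa$. The only difference is direction. The paper starts from $\mathfrak{L}$ and passes tacitly from equality of divisors to equality of functions, which silently needs the normalization $\delta=1$. You start from $\frac{1}{h}=f_\alpha+\gamma$ and identify its numerator with $\mathfrak{L}$, which makes that normalization, the non-verticality of $\mathfrak{L}$, and the degenerate configurations explicit.
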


\begin{proof}
Let the line through \( V^{\vee} \), \( \dot{V} \) and \( \ddot{V} \) be given by
\begin{equation}
    \label{eq:DefL}
    \mathfrak{L}(x,y) := y -  \beta^{\vee} -\kappa (x - \alpha)  =0 .
\end{equation}
Consider the function
\[
    \frac{x - \alpha}{y - \beta^{\vee} - \kappa (x - \alpha)},
\]
whose divisor is
\[
    V + \infty - \dot{V} - \ddot{V},
\]
which coincides with \((h)\).

From Lemma \ref{lem:motiverelation}(2), we set \(\gamma = m - \kappa\) (with \(\kappa \in L\)) and then
\[
    \frac{1}{h} = f_\alpha + m - \kappa.
\]
The expression for \(f_\alpha\) then gives
\[
    h(x, y) = \frac{x - \alpha}{y - \beta^{\vee} - \kappa (x - \alpha)}.
\]
\end{proof}

\begin{notation}
The following quantities (depending on \( \kappa \)) are important:

\[
\gamma_1 =( \kappa^q + m + a_1) (m - \kappa) - \theta,
\]
\[
\gamma_2 = \frac{d}{d\kappa}\gamma_1 =  - \kappa^q - m - a_1,
\]
\[
\chi_1 = \kappa^2 + a_1 \kappa - a_2 -\alpha,
\]
and
\[
    \chi_2 = a_4 - 2 \kappa \beta^{\vee} + \kappa^2 \alpha - a_1 \beta^{\vee} - a_3 \kappa + a_2 \alpha + \alpha^2.
\]
\end{notation}

\begin{cor}\label{cor:C0C1C2}
With the above notation, we obtain the following explicit formulas for \(C_0, C_1, C_2\):
\begin{enumerate}
    \item  \( C_2 = x^2 - \chi_1 x + \chi_2  \).
    \item \[
C_1 = \lambda_2 C_2 + \lambda_1 C_1',
\]
where
\[
 C_1' = h C_2 =  y + (\kappa + a_1) (x - \alpha) - \beta,
\]
and \(\lambda_1, \lambda_2\) are constants in \(L\) to be determined.
    \item \[
C_0 = (x+ \gamma_1)C_1'+ \gamma_2 C_2.
\]  
\end{enumerate} 
\end{cor}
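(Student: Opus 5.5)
The plan is to determine each of $C_2, C_1, C_0$ as an element of a small Riemann--Roch space, using the zero divisors from Lemma~\ref{lem:motiverelation}. The sign normalizations $\Sgn^L(C_2)=\Sgn^L(C_0)=1$ fix the leading coefficients, and the remaining constants are then pinned down by the relation $C_0/C_2 = f_\alpha h/h^{(1)}$ together with the rank-one identity for $f_\alpha f_\alpha^{(1)}$.

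\emph{Step 1: $C_2$.} By Proposition~\ref{prop:motive2}, $C_2\in\langle x^2,x,1\rangle_L$ is monic. By Lemma~\ref{lem:motiverelation}(4) its zeros are $\dot V,\ddot V$ and their inverses, so $C_2=(x-\dot\alpha)(x-\ddot\alpha)$.
\begin{itemize}
\item Substitute the line $\mathfrak L$, $y=\beta^\vee+\kappa(x-\alpha)$, into the Weierstrass equation. This gives a monic cubic in $x$ whose roots are $\alpha,\dot\alpha,\ddot\alpha$.
\item Comparing the $x^2$-coefficients gives $\alpha+\dot\alpha+\ddot\alpha=\kappa^2+a_1\kappa-a_2$, hence $\dot\alpha+\ddot\alpha=\chi_1$.
\item Dividing the cubic by $x-\alpha$ and reading off the constant term, simplified with $\alpha^3$ eliminated via the curve equation at $V^\vee$, gives $\dot\alpha\ddot\alpha=\chi_2$.
\end{itemize}
This is a routine Vieta computation.

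\emph{Step 2: $C_1'$ and $C_1$.} Combine $(C_2)=\dot V+\ddot V+\dot V^\vee+\ddot V^\vee-4\infty$ with $(h)=V+\infty-\dot V-\ddot V$. This gives
\[
(hC_2)=\dot V^\vee+\ddot V^\vee+V-3\infty,
\]
so $hC_2\in\mathcal L(3\infty)$, and it has sign $1$ since $\Sgn(h)=1$. Hence $hC_2=y+cx+d$.

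Its three zeros $\dot V^\vee,\ddot V^\vee,V$ lie on the image of $\mathfrak L$ under negation $(x,y)\mapsto(x,-y-a_1x-a_3)$. That image is the line $y=\beta-(\kappa+a_1)(x-\alpha)$, which yields $C_1'=y+(\kappa+a_1)(x-\alpha)-\beta$.

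For $C_1$, Lemma~\ref{lem:motiverelation}(5) gives
\[
C_1\in\mathcal L(4\infty-\dot V^\vee-\ddot V^\vee).
\]
By Riemann--Roch this space has dimension $2$. It contains the linearly independent functions $C_2$ and $C_1'$, so $C_1=\lambda_2C_2+\lambda_1C_1'$.

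\emph{Step 3: $C_0$.} This is the step requiring actual computation. By Lemma~\ref{lem:motiverelation}(1), $C_0=C_2f_\alpha h/h^{(1)}=C_1'f_\alpha/h^{(1)}$. Twisting $1/h=f_\alpha+m-\kappa$ gives $1/h^{(1)}=f_\alpha^{(1)}+m^q-\kappa^q$. Therefore
\[
C_0=C_1'\bigl(f_\alpha f_\alpha^{(1)}+(m^q-\kappa^q)f_\alpha\bigr).
\]
Next, use the rank-one expansion of $x$ from Section~2: with the sign-normalized shtuka function, $x=\theta+x_1f+ff^{(1)}$ and $x_1=m+m^q+a_1$ (Lemma~\ref{lem:coeofrank1}). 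Since $ff^{(1)}=f_\alpha f_\alpha^{(1)}$ for any choice of sign, and taking the sign $+1$ in this identity, we get $f_\alpha f_\alpha^{(1)}=x-\theta-x_1f_\alpha$. Substituting,
\[
C_0=C_1'\bigl(x-\theta-(m+a_1+\kappa^q)f_\alpha\bigr).
\]

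Finally, rewrite $f_\alpha=1/h-(m-\kappa)$ and use $C_1'/h=C_2$:
\[
C_0=\bigl(x-\theta+(m+a_1+\kappa^q)(m-\kappa)\bigr)C_1'-(\kappa^q+m+a_1)C_2=(x+\gamma_1)C_1'+\gamma_2C_2.
\]

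The main risk is sign bookkeeping, in particular the choice $s=-1$ for $f$ versus the sign-normalized rank-one identity. As a cross-check, I would confirm that $C_0\in\mathcal L(5\infty-\dot V^\vee-\ddot V^\vee)=\langle C_2,C_1',xC_1'\rangle_L$ has leading term $xy$ with sign $1$, and that it vanishes at $\xi$ as Lemma~\ref{lem:motiverelation}(6) requires.
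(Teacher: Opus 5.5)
Your proposal is correct and follows the paper's proof. Steps~1 and~2 match it: Vieta on the line $\mathfrak{L}$, then identifying $hC_2$ with the negated line through $V,\dot V^\vee,\ddot V^\vee$. Step~3 uses the same three ingredients as the paper: Lemma~\ref{lem:motiverelation}(1), the identity $1/h=f_\alpha+m-\kappa$, and the rank-one expansion $x=\theta+(m+m^q+a_1)f_\alpha+f_\alpha f_\alpha^{(1)}$.

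The only difference is in Step~3. You substitute these directly into $C_0=C_1'f_\alpha/h^{(1)}$. The paper instead first writes $C_0=(x+\gamma_1)C_1'+\gamma_2C_2+\gamma_3x+\gamma_4$, kills $\gamma_3x+\gamma_4$ using the zeros at $\dot V^\vee,\ddot V^\vee$, and then solves for $\gamma_1,\gamma_2$ by comparing coefficients of $1$ and $f_\alpha$. Your direct route is slightly cleaner, since it makes that vanishing argument unnecessary.
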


Before giving the proof, we remark that the only unknown coefficients are \(\lambda_1\) and \(\lambda_2\). The rest of this paper is devoted to finding their explicit expressions.

\begin{proof}
(1) From the properties of \(C_2\), combining the line \(L\) and the elliptic curve \(E\) gives
\begin{align*}
    C_2 & = (x - \dot{\alpha} ) (x - \ddot{\alpha} ) \\
    & = x^2 - (\kappa^2 + a_1 \kappa - a_2 -\alpha) x + (a_4 - 2 \kappa \beta^{\vee} + \kappa^2 \alpha - a_1 \beta^{\vee} - a_3 \kappa + a_2 \alpha + \alpha^2) \\
    & = x^2 - \chi_1 x + \chi_2,
\end{align*}
where
\[
\chi_1 = \kappa^2 + a_1 \kappa - a_2 -\alpha, \qquad
\chi_2 = a_4 - 2 \kappa \beta^{\vee} + \kappa^2 \alpha - a_1 \beta^{\vee} - a_3 \kappa + a_2 \alpha + \alpha^2.
\]

(2) We write
\[
C_1 = \lambda_2 C_2 + \lambda_1 C_1',
\]
where
\[
 C_1' \in \langle x, y, 1 \rangle_L ,\quad \lambda_1 , \lambda_2 \in L,\quad \Sgn^L( C_1') = 1.
\]
Thus \(C_1' = y + \langle x, 1 \rangle_L \). Note that \(C_1'\) has zeros at \(\dot{V}^{\vee}\) and \(\ddot{V}^{\vee}\), so
\[
\left(\frac{C_1'}{C_2}\right) = (h).
\]
Hence
\begin{equation}
    \label{eq:C1'property}
    C_1' = h C_2.
\end{equation}
Since the identity
\begin{equation}
    \label{eq:linedVvddVv}
    (x - \dot{\alpha} ) (x - \ddot{\alpha} ) (x - \alpha) = (y + (\kappa + a_1) (x - \alpha) - \beta) (y - \beta^{\vee} - \kappa (x - \alpha))
\end{equation}
holds naturally, and the line
\[
    y + (\kappa + a_1) (x - \alpha) - \beta = 0
\]
passes through \(V\), \(\dot{V}^{\vee}\), and \(\ddot{V}^{\vee}\), we obtain
\[
    C_1'= y + (\kappa + a_1) (x - \alpha) - \beta.
\]

(3) Using the motive relation, we have
\[
C_2 \tau^2\s = (\lambda_2 C_2 + \lambda_1 C_1') \tau \s + C_0\s.
\]
Equivalently,
\[
C_2 (\tau^2\s- \lambda_2 \tau \s)  = (  \lambda_1 C_1') \tau \s +  C_0 \s.
\]
From the expressions for \( C_0, C_1', C_2\), we may write
\[
C_0 = (x+ \gamma_1)C_1'+ \gamma_2 C_2 + \gamma_3 x + \gamma_4 , \quad \gamma_i \in L.
\]
Since \(C_0\) has zeros at \(\dot{V}^{\vee}\) and \(\ddot{V}^{\vee}\), the polynomial \((\gamma_3 x + \gamma_4)\) must vanish at both points. If it were nontrivial, this would imply
\[
\dot{V}^{\vee} \dotplus \ddot{V}^{\vee} = \infty,
\]
contradicting
\[
\dot{V}^{\vee} \dotplus \ddot{V}^{\vee} = V^{\vee}.
\]
Therefore,
\[
C_0 = (x+ \gamma_1)C_1'+ \gamma_2 C_2.
\]

On the other hand,
\[
\frac{C_0}{C_2} = (x+ \gamma_1)h + \gamma_2 = \frac{f_{\alpha} h }{h^{(1)}}.
\]
Since
\[
\frac{1}{h} = f_\alpha + m - \kappa,
\]
we get
\[
 (x+ \gamma_1) + \gamma_2 (f_\alpha + m - \kappa) =   f_{\alpha}  ( f_\alpha^{(1)} + m^q - \kappa^q).
\]
By Lemma \ref{lem:coeofrank1}, the rank-one Drinfeld module satisfies
\[
\psi_x = \tau^2 + (m + m^q + a_1) \tau + \theta,
\]
i.e.
\[
 x = f_{\alpha} f_\alpha^{(1)} + (m + m^q + a_1) f_{\alpha} + \theta.
\]
Substituting this into the previous equality gives
\[
 f_{\alpha} f_\alpha^{(1)} + (m + m^q + a_1) f_{\alpha} + \theta+ \gamma_1  + \gamma_2 (f_\alpha + m - \kappa) =   f_{\alpha}  ( f_\alpha^{(1)} + m^q - \kappa^q).
\]
Thus
\[
  (m + m^q + a_1 + \gamma_2) f_{\alpha} + \theta+ \gamma_1 + \gamma_2 (m - \kappa) =   f_{\alpha}  (  m^q - \kappa^q).
\]
Comparing coefficients of \(f_\alpha\) and the constant term, we obtain
\[
m + m^q + a_1 + \gamma_2 = m^q - \kappa^q
\]
and
\[
\theta+ \gamma_1   + \gamma_2 (m - \kappa) =0.
\]
Therefore,
\[
\gamma_2 =  - \kappa^q - m - a_1,
\]
and
\[
\gamma_1 =( \kappa^q + m + a_1) (m - \kappa) - \theta.
\]
\end{proof}

We record a useful identity in the following lemma.

\begin{lem}\label{lem:chi2qg2g1}
Let \( \vk=(\beta^\vee)^q-\beta^\vee-\kappa^q\alpha^q+\kappa\alpha\). Then
\[
\chi_2^q=  \gamma_2 \vk - \alpha \gamma_1 .
\]
\end{lem}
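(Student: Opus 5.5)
We read off the identity as a coefficient of $y$ in a polynomial identity on $E$. We turn the defining relation for $C_0$ from Corollary~\ref{cor:C0C1C2} into an identity in $\A_L$, expand both sides in the monomial basis $\{x^i, x^jy\}$, and compare the coefficient of the single monomial $y$.

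\emph{Step 1: clear denominators.} By Lemma~\ref{lem:motiverelation}(1) and Corollary~\ref{cor:C0C1C2}(3),
\[
(x+\gamma_1)h+\gamma_2=\frac{C_0}{C_2}=\frac{f_\alpha h}{h^{(1)}},
\qquad\text{so}\qquad
(x+\gamma_1)+\frac{\gamma_2}{h}=\frac{f_\alpha}{h^{(1)}}.
\]
Insert $1/h=\mathfrak L/(x-\alpha)$ and $f_\alpha=(y-\eta-m(x-\theta))/(x-\alpha)$, and multiply by $x-\alpha$. Twisting \eqref{eq:linedVvddVv} gives $\mathfrak L^{(1)}C_1'^{(1)}=C_2^{(1)}(x-\alpha^q)$. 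Hence $1/h^{(1)}=\mathfrak L^{(1)}/(x-\alpha^q)=C_2^{(1)}/C_1'^{(1)}$. After multiplying through by $C_1'^{(1)}$ we obtain the polynomial identity in $\A_L$
\[
\bigl((x+\gamma_1)(x-\alpha)+\gamma_2\mathfrak L\bigr)\,C_1'^{(1)}
=\bigl(y-\eta-m(x-\theta)\bigr)\,C_2^{(1)},
\]
where $C_1'^{(1)}=y+(\kappa^q+a_1)(x-\alpha^q)-\beta^q$ and $C_2^{(1)}=x^2-\chi_1^qx+\chi_2^q$.

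\emph{Step 2: compare the coefficient of $y$.} The right-hand side contains no $y^2$. Its $y$-part is $y\,C_2^{(1)}$, so the coefficient of the monomial $y$ is $\chi_2^q$.

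On the left, the only $y^2$ term is $\gamma_2y^2$. We reduce it with $y^2=-a_1xy-a_3y+(\text{cubic in }x)$; this contributes $-a_3\gamma_2$ to the coefficient of $y$, and the $-a_1xy$ term only feeds the monomial $xy$. The remaining contributions are:
\begin{itemize}
\item the constant term of the $y$-free part of the first factor, $-\alpha\gamma_1+\gamma_2(\kappa\alpha-\beta^\vee)$, multiplied by the $y$ in $C_1'^{(1)}$;
\item $\gamma_2y$ multiplied by the constant term of $C_1'^{(1)}$, namely $-(\kappa^q+a_1)\alpha^q-\beta^q$.
\end{itemize}
Adding everything, the coefficient of $y$ on the left is
\[
-\alpha\gamma_1+\gamma_2\bigl(\kappa\alpha-\beta^\vee-\kappa^q\alpha^q-a_1\alpha^q-\beta^q-a_3\bigr).
\]

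\emph{Step 3: conclude.} By Lemma~\ref{lem:m2}(2), $-a_1\alpha^q-a_3-\beta^q=(\beta^\vee)^q$. So the bracket becomes $(\beta^\vee)^q-\beta^\vee-\kappa^q\alpha^q+\kappa\alpha=\vk$. Equating the two coefficients gives $\chi_2^q=\gamma_2\vk-\alpha\gamma_1$.

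The main point of care is Step 1: the identity must hold in $\A_L$ and not merely as rational functions. This is guaranteed because both sides are polynomials in $x,y$ and $\{x^i,x^jy\}$ is an $L$-basis of $\A_L$, so coefficient comparison is legitimate. The other delicate point is tracking exactly which terms feed the monomial $y$ after the single $y^2$-reduction.
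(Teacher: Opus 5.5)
Your proof is correct, and it takes a different route from the paper's. The paper argues by direct elimination. Using \eqref{eq:betavee}, it rewrites $\gamma_2\vk-\alpha\gamma_1$ as $\alpha\theta-(\kappa^q+m+a_1)\bigl(-(m+\kappa^q+a_1)\alpha^q-a_3-2\eta+2m\theta\bigr)$. It then expands $\chi_2^q$ and removes $a_2,a_4$ with the Vieta relations of Lemma~\ref{lem:m2}(1) for the line $\mathfrak L_m$ until the two sides match. You instead read the identity off as the $y$-coefficient of
\[
\bigl((x+\gamma_1)(x-\alpha)+\gamma_2\mathfrak L\bigr)C_1'^{(1)}=\bigl(y-\eta-m(x-\theta)\bigr)C_2^{(1)}
\]
in $\A_L$. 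Your bookkeeping checks out: the $-a_3\gamma_2$ from reducing $\gamma_2y^2$, and the two constant terms $-\alpha\gamma_1+\gamma_2(\kappa\alpha-\beta^\vee)$ and $-(\kappa^q+a_1)\alpha^q-\beta^q$. The paper's computation is self-contained, uses only Lemma~\ref{lem:m2}, and is visibly an identity in the free parameter $\kappa$. Yours is shorter and explains where the identity comes from. As a bonus, the $xy$-coefficient of the same polynomial identity gives $\uk\gamma_2+\chi_1^q+\gamma_1=\alpha$, which the paper derives separately in the proof of Lemma~\ref{lem:C1C2A2}.

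One point to tighten: you justify Step 1 by Lemma~\ref{lem:motiverelation}(1) and Corollary~\ref{cor:C0C1C2}(3), which presuppose an actual Drinfeld module $\phi$. The paper, however, relies on this lemma with $\kappa$ treated as a free parameter: in Theorem~\ref{thm:equivalence}, in the proof of Proposition~\ref{prop:G2-C21U2}, and hence in the parametrisation of Theorem~\ref{thm:Drinfeld}. Both of your ingredients in fact hold for every $\kappa$. With $\gamma_1,\gamma_2$ given by their formulas, $(x+\gamma_1)+\gamma_2/h=f_\alpha/h^{(1)}$ reduces to the rank-one relation $x=\theta+(m+m^q+a_1)f_\alpha+f_\alpha f_\alpha^{(1)}$ of Lemma~\ref{lem:coeofrank1}; this is the computation in the proof of Corollary~\ref{cor:C0C1C2}(3) run backwards. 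Likewise, $(x-\alpha)C_2=C_1'\mathfrak L$ is just the factorisation of the cubic cut out on $E$ by the line of slope $\kappa$ through $V^\vee$. Citing these directly makes your argument independent of $\phi$.
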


\begin{proof}
Expanding the definition of \(\gamma_2 \vk - \alpha \gamma_1\), we get
\begin{align*}
    \gamma_2 \vk - \alpha \gamma_1  
    &=  (- \kappa^q - m - a_1) ((\beta^{\vee})^q  - (\beta^{\vee})  -\kappa^q  \alpha^q  +\kappa \alpha ) - \alpha (( \kappa^q + m + a_1) (m - \kappa) - \theta ) \\
    & = \alpha \theta - ( \kappa^q + m + a_1) ((\beta^{\vee})^q  - (\beta^{\vee})  -\kappa^q  \alpha^q  +\kappa \alpha + \alpha (m - \kappa)) \\
    & = \alpha \theta - ( \kappa^q + m + a_1) ((\beta^{\vee})^q  - (\beta^{\vee})  -\kappa^q  \alpha^q  + \alpha m) \\
    & =  \alpha \theta - ( \kappa^q + m + a_1) (- (m + \kappa^q + a_1) \alpha^q  - a_3 - 2 \eta + 2 m \theta).
\end{align*}
The last equality follows from \eqref{eq:betavee}.

By Lemma \ref{lem:m2}, replacing \(a_2\) and \(a_4\) gives
\begin{align*}
    \chi_2^q & =  a_4 - a_1  (\beta^{\vee})^q  - a_3 \kappa^q   + a_2 \alpha^q  - 2 \kappa^q (\beta^{\vee})^q  + \kappa^{2q} \alpha^q  + \alpha^{2q} \\
    & = \alpha^{q+1} + \alpha \theta + \alpha^q \theta  + 2 m (\eta - m \theta) + a_1 (\eta - m \theta) + a_3 m\\
    & \quad - (a_1 + 2 \kappa^q) (-a_1\alpha^q - a_3 - m(\alpha^q-\theta) - \eta )  - a_3 \kappa^q   \\
    & \quad + (m^2 + a_1 m - \alpha - \alpha^q - \theta )\alpha^q    + \kappa^{2q} \alpha^q  + \alpha^{2q} \\
    & = \alpha \theta  +  a_3 (m + a_1 + 2 \kappa^q - \kappa^q) + ( 2 m + a_1 + a_1 + 2 \kappa^q) \eta + (-2m - a_1 - a_1 - 2 \kappa^q ) m \theta \\
    & \quad  + (a_1 + 2 \kappa^q) ( a_1 + m  )\alpha^q +  m (m + a_1 )\alpha^q    + \kappa^{2q} \alpha^q \\
    & = \alpha \theta  +  a_3 (m + a_1 + \kappa^q ) + (  m + a_1 + \kappa^q) 2 \eta - (m + a_1 + \kappa^q )2  m \theta \\
    & \quad  + ((a_1 + 2 \kappa^q + m) ( a_1 + m  )  + \kappa^{2q} )\alpha^q \\
    & = \alpha \theta  +  a_3 (m + a_1 + \kappa^q ) + (  m + a_1 + \kappa^q) 2 \eta - (m + a_1 + \kappa^q )2  m \theta + (a_1 +  \kappa^q + m)^2 \alpha^q \\
    & = \gamma_2 \vk - \alpha\gamma_1.
\end{align*}
\end{proof}

\subsection{Admissible Basis for Anderson Motive}
Recall that 
$
V^{-1} \A_L = (1, f_{\alpha})_{\A_L}$. 
In this subsection, we find the admissible basis of Anderson motive, which behaves well under the $\tau$-action.
\begin{prop}\label{prop:Admissible_Basis}
    (1) There exist basis $ E_1, E_2 $ such that 
    \[
    M_{\phi} = \A_L E_1 \oplus  V^{-1} \A_L E_2.
    \]
    More precisely, $ E_1 = \s $ and 
    \begin{equation}
        \label{eq:DefE2}
        E_2 = h (\lambda_1 \tau \s + (x+\gamma_1) \s );
    \end{equation}
    for some constant $ \lambda_1 , \gamma_1  \in L $.

    (2) We set 
    \[
    T_1 =  h^{(1)}\lambda_1^{q+1} \gamma_2   -  h^{(1)} \left( x+\lambda_1^q   \lambda_2  + \gamma_1^q  \right)  (x+ \gamma_1)  ;
    \]
    and
    \begin{equation}\label{eq:org_G2}
    T_2   = h^{(1)} \lambda_1^{q+1}     +  h^{(1)} \left( x+\lambda_1^q   \lambda_2  + \gamma_1^q  \right) \frac{1}{  h}  .
    \end{equation}
    
    Then the $\tau$-action is given by 
    \[
    \tau \begin{pmatrix} E_1 \\ E_2 \end{pmatrix}
    =
    \frac{1}{\lambda_1}\begin{pmatrix}
    - (x+ \gamma_1)  & \frac{1}{  h} \\
    T_1 & T_2
    \end{pmatrix}
    \begin{pmatrix} E_1 \\ E_2 \end{pmatrix}.
    \]
(3)
    The we obtain the following formulas:
    \[
\tau^2 \s =  \gamma_2 E_1-  \frac{\lambda_2}{\lambda_1}(x+ \gamma_1) E_1 + \left( 1 +\frac{\lambda_2}{\lambda_1 h} \right) E_2 ;
\]
and
\[
\tau f_{\alpha} E_2 = \lambda_1^{q}  \tau^2 \s + (x + \gamma_1^q) \tau E_1 + (\kappa^q -m^q ) \tau E_2 .
\]

\end{prop}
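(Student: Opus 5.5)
The plan is to work entirely inside $M_\phi=\A_L\s+\A_L\tau\s+\A_L\tau^2\s$ (Corollary~\ref{cor:spaceMphi}, valid since $P_3\neq0$ by Lemma~\ref{lem:P3}). We will use the explicit motive relation of Corollary~\ref{cor:C0C1C2}, namely $C_2\tau^2\s=(\lambda_2C_2+\lambda_1C_1')\tau\s+\big((x+\gamma_1)C_1'+\gamma_2C_2\big)\s$. Dividing by $C_2$ and using $C_1'=hC_2$, this becomes
\[
\tau^2\s=\lambda_2\tau\s+h\big(\lambda_1\tau\s+(x+\gamma_1)\s\big)+\gamma_2\s=\lambda_2\tau\s+E_2+\gamma_2E_1 .
\]
This single identity drives everything. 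Solving the definition \eqref{eq:DefE2} for $\tau\s$ gives $\tau E_1=\tau\s=\frac{1}{\lambda_1}\big(-(x+\gamma_1)E_1+\frac1hE_2\big)$, which is the first row of the matrix. Substituting it into the displayed identity gives the first formula of (3).

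For part (1), I first show that $\A_LE_1+V^{-1}\A_LE_2$ is contained in $M_\phi$ and contains $\s,\tau\s,\tau^2\s$.

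The containment $V^{-1}\A_L=(1,f_\alpha)_{\A_L}$, together with $1/h=f_\alpha+m-\kappa$, shows that it suffices to check $E_2\in M_\phi$ and $f_\alpha E_2\in M_\phi$. Now $f_\alpha E_2=(1/h-m+\kappa)E_2$, and $\frac1hE_2=\lambda_1\tau\s+(x+\gamma_1)\s$. For $E_2$ itself, the displayed identity gives $E_2=\tau^2\s-\lambda_2\tau\s-\gamma_2\s\in M_\phi$.

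Conversely, $\s=E_1$, $\tau\s$ lies in the module by the first-row formula, and $\tau^2\s$ lies in it by the first formula of (3). Hence the two modules coincide.

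For directness of the sum, I will use ranks. $M_\phi$ has rank $2$ over $\A_L$, and $E_1,E_2$ are $\mathbf K_L$-independent because $\s,\tau\s$ are. The sum is therefore direct.

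For part (2), apply $\tau$ to $\frac1hE_2=\lambda_1\tau\s+(x+\gamma_1)\s$. This yields
\[
\frac{1}{h^{(1)}}\tau E_2=\lambda_1^q\tau^2\s+(x+\gamma_1^q)\tau\s .
\]
Writing $\frac1{h^{(1)}}=f_\alpha^{(1)}+m^q-\kappa^q$ and $\tau(f_\alpha E_2)=f_\alpha^{(1)}\tau E_2$ gives the second formula of (3).

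Next, substitute $\tau^2\s=\lambda_2\tau\s+E_2+\gamma_2E_1$. This gives
\[
\frac1{h^{(1)}}\tau E_2=\lambda_1^qE_2+\lambda_1^q\gamma_2E_1+(x+\lambda_1^q\lambda_2+\gamma_1^q)\tau\s .
\]
Inserting $\tau\s=\frac1{\lambda_1}\big(-(x+\gamma_1)E_1+\frac1hE_2\big)$ and multiplying by $h^{(1)}$ then gives exactly $T_1/\lambda_1$ and $T_2/\lambda_1$.

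The main obstacle I expect is in part (1). The naive argument only produces the stated generators; the real issue is the claim that the coefficient module of $E_2$ is exactly $V^{-1}\A_L$ and not something larger. To settle this I will compare determinants via the wedge computation of Lemma~\ref{lem:motiverelation}:
\[
E_1\wedge E_2=\lambda_1h\,\s\wedge\tau\s=\lambda_1S_0 .
\]
Since $M_\phi\wedge M_\phi=V^{-1}\A_LS_0$, this forces the coefficient ideal of $E_2$ to be precisely $V^{-1}\A_L$, given that $E_1$'s coefficient ideal is $\A_L$.

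The remaining delicate point is that $E_1$ spans a saturated $\A_L$-line, so that its coefficient ideal really is $\A_L$. I would check this using $\Sgn$ and degree considerations, or else accept it from the structure of the generators.
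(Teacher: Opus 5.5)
Your proposal is correct and follows essentially the same route as the paper: the motive relation with $C_1'=hC_2$ gives $\tau^2\s=\lambda_2\tau\s+E_2+\gamma_2E_1$, from which the first row of the matrix, both formulas in (3), and $T_1,T_2$ follow by the same substitutions. Your explicit checks that $f_\alpha E_2\in M_\phi$ and that $E_1,E_2$ are $\mathbf{K}_L$-independent fill in what the paper calls ``clear''.

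The ``main obstacle'' you raise is not actually an obstacle. Once both inclusions hold and $E_1,E_2$ are independent, the map $(a,b)\mapsto aE_1+bE_2$ from $\A_L\oplus V^{-1}\A_L$ to $M_\phi$ is already bijective, so the wedge comparison $E_1\wedge E_2=\lambda_1 S_0$ and any saturation check on $E_1$ are at most a consistency check. Like the paper, you tacitly use $\lambda_1\neq 0$, which comes from $\lambda_1=-P_3$ and Lemma~\ref{lem:P3}.
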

\begin{proof}
(1)
It is clear that $ E_1, E_2 \in M_{\phi} $. 
For the converse, it is simple to check 
\[
\s, \tau \s,\tau^2 \s \in   \A_L E_1 \oplus  V^{-1} \A_L  E_2;
\]
By definition,  $ \s = E_1 $ is obvious.

From \eqref{eq:DefE2}, we obtain
\begin{equation}
    \label{eq:tauE1}
    \tau \s  = \tau E_1  = \frac{1}{\lambda_1 h} E_2 -  \frac{1}{\lambda_1}(x+ \gamma_1) E_1.
\end{equation}

It suffices to show that
\[
\tau^2 \s \in  \A_L E_1 \oplus V^{-1} \A_L E_2.
\]
From Corollary~\ref{cor:C0C1C2}, 
\[
C_1 = \lambda_2 C_2 + \lambda_1 C_1' ;\quad C_1' \in \langle x, y, 1 \rangle_L;
\]
and
\[
C_0 = (x+ \gamma_1)C_1'+ \gamma_2 C_2 ;
\]
Thus, we find 
\[
C_2 (\tau^2\s- \lambda_2 \tau \s)  =    C_1'  \lambda_1 \tau \s +  ((x+ \gamma_1)C_1'+ \gamma_2 C_2)\s. 
\]
It simplifies to 
\[
 (\tau^2\s- \lambda_2 \tau \s - \gamma_2 \s )  =    \frac{C_1'}{C_2}  (\lambda_1 \tau \s  + (x+ \gamma_1) \s ). 
\]
Notice that 
\[
 \frac{C_1'}{C_2} =h ;
\]
it follows that
\[
 \tau^2\s- \lambda_2 \tau \s - \gamma_2 \s = E_2.
\]
Therefore
\begin{align} 
\label{eq:tau2E2}
\tau^2\s &= E_2  + \gamma_2 E_1+ \lambda_2 \tau \s \nonumber \\
&=E_2  + \gamma_2 E_1+\frac{\lambda_2}{\lambda_1 h} E_2 -  \frac{\lambda_2}{\lambda_1}(x+ \gamma_1) E_1;
\end{align}
which is contained in 
$\A_L E_1 \oplus  V^{-1} \A_L E_2$.

(2) 
From the definition of $E_2$, we deduce the expression of $\tau E_2$ as follows:
\[
 \tau E_2 =  h^{(1)} (\lambda_1^q \tau^2 \s  + (x+ \gamma_1^q) \tau \s ). 
\]
Substituting \eqref{eq:tau2E2} and \eqref{eq:tauE1} for $\tau^2 \s$ and $\tau \s$, we obtain
\begin{align*}
    \tau E_2 & =  h^{(1)} (\lambda_1^q (E_2  + \gamma_2 E_1+\frac{\lambda_2}{\lambda_1 h} E_2 -  \frac{\lambda_2}{\lambda_1}(x+ \gamma_1) E_1)  + (x+ \gamma_1^q) (\frac{1}{\lambda_1 h} E_2 -  \frac{1}{\lambda_1}(x+ \gamma_1) E_1) ) \\
    & =  h^{(1)}\lambda_1^q \gamma_2 E_1 -  h^{(1)} \left( x+\lambda_1^q   \lambda_2  + \gamma_1^q  \right) \frac{1}{\lambda_1}(x+ \gamma_1) E_1 \\
    & \quad + \lambda_1^q h^{(1)}  E_2 +  h^{(1)} \left( x+\lambda_1^q   \lambda_2  + \gamma_1^q  \right) \frac{1}{\lambda_1 h} E_2 \\
    & = \frac{1}{\lambda_1} ( T_1 E_1 + T_2 E_2).
\end{align*}

(3) It remains to calculate $\tau f_{\alpha} E_2$.
    Notice that
    \[
        \frac{1}{h} = f_{\alpha} + m - \kappa.
    \]
    Combining with \eqref{eq:DefE2}, the above identity yields
    \begin{align*}
        \tau f_{\alpha} E_2 & = \tau (\frac{1}{h} + \kappa - m) E_2 \\
        & = \tau (\lambda_1 \tau \s + (x+\gamma_1) \s + (\kappa - m ) E_2) \\
        & = \lambda_1^{q}  \tau^2 \s + (x + \gamma_1^q) \tau E_1 + (\kappa^q -m^q ) \tau E_2 .
    \end{align*}
    
\end{proof}

\begin{cor} \label{cor:E2tau}
With the notation of Proposition~\ref{prop:Admissible_Basis} and Corollary~\ref{cor:C0C1C2}, the second admissible basis element $E_2\in M_\phi$ can be expressed as a polynomial in $\tau$ applied to $E_1=\mathbf{s}_\phi$:
\[
 E_2 = \bigl(\tau^2 - \lambda_2\tau - \gamma_2\bigr)E_1. 
\] 

In the same manner, the element $f E_2\in M_\phi$ can be expressed as  
\[ 
fE_2 = \left(-(m-\kappa)\tau^2
+\bigl(\lambda_1+(m-\kappa)\lambda_2\bigr)\tau
+\bigl(x+\gamma_1+(m-\kappa)\gamma_2\bigr)
  \right) E_1.
\]
\end{cor}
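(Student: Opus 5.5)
The plan is to read both identities off the computation already carried out in the proof of Proposition~\ref{prop:Admissible_Basis}(1), and then do a one-line substitution using the relation between $h$ and $f_\alpha$. We interpret $f$ in the statement as the shtuka function $f_\alpha$, since $f_\alpha=\frac1h-(m-\kappa)$ is exactly what produces the stated coefficients. Throughout, ``applied to $E_1$'' means the $\tau$-powers act via the $L\{\tau\}$-structure, while $x$ acts through the $\A_L$-module structure. This is legitimate because $x\,\s=\phi_x\s$ in $M_\phi$.

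\textbf{First identity.} By Corollary~\ref{cor:C0C1C2} we have $C_1=\lambda_2C_2+\lambda_1C_1'$ and $C_0=(x+\gamma_1)C_1'+\gamma_2C_2$. Substituting these into the motive relation $C_2\tau^2\s=C_1\tau\s+C_0\s$ and dividing by $C_2$ gives
\[
\tau^2\s-\lambda_2\tau\s-\gamma_2\s=\frac{C_1'}{C_2}\bigl(\lambda_1\tau\s+(x+\gamma_1)\s\bigr).
\]
By \eqref{eq:C1'property}, $C_1'/C_2=h$, so the right-hand side is exactly $E_2$ as defined in \eqref{eq:DefE2}. Since $E_1=\s$, this yields $E_2=(\tau^2-\lambda_2\tau-\gamma_2)E_1$. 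This is precisely the intermediate equation obtained in the proof of Proposition~\ref{prop:Admissible_Basis}(1), so we simply cite it.

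\textbf{Second identity.} By the Lemma preceding Corollary~\ref{cor:C0C1C2}, $\frac1h=f_\alpha+m-\kappa$, hence $f_\alpha E_2=\frac1h E_2-(m-\kappa)E_2$. From \eqref{eq:DefE2},
\[
\tfrac1h E_2=\lambda_1\tau\s+(x+\gamma_1)\s .
\]
For the remaining term we use the first identity:
\[
(m-\kappa)E_2=(m-\kappa)\tau^2\s-(m-\kappa)\lambda_2\tau\s-(m-\kappa)\gamma_2\s .
\]
Subtracting and collecting the coefficients of $\tau^2\s$, $\tau\s$ and $\s$ gives
\[
-(m-\kappa)\tau^2\s+\bigl(\lambda_1+(m-\kappa)\lambda_2\bigr)\tau\s+\bigl(x+\gamma_1+(m-\kappa)\gamma_2\bigr)\s,
\]
which is the claimed expression for $fE_2$.

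\textbf{Main obstacle.} There is no real difficulty here. The only point needing care is to keep $L$-scalars to the left of $\tau$, which holds here, so no Frobenius twists of $m-\kappa$, $\lambda_2$ or $\gamma_2$ appear. One must also treat multiplication by $x$ and by $f_\alpha$ as the $\A_L$- (respectively $V^{-1}\A_L$-) module action rather than as elements of $L\{\tau\}$. Both points are immediate from Proposition~\ref{prop:Admissible_Basis}.
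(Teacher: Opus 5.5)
Your proof is correct and follows the paper's own argument essentially step for step. You substitute the explicit forms of $C_1$ and $C_0$ into the motive relation, cancel $C_2$ using $C_1'/C_2=h$, and then expand $f_\alpha E_2=\tfrac{1}{h}E_2-(m-\kappa)E_2$ using the first identity; your reading of $f$ as $f_\alpha$ matches the relation $1/h=f+m-\kappa$ that the paper itself uses.
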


\begin{proof}
(1) From the motive relation (Corollary 3.2),
\[
C_2\tau^2E_1 = C_1\tau E_1 + C_0E_1.
\]
Substituting the expressions from Corollary~\ref{cor:C0C1C2},
\[
C_1=\lambda_2C_2+\lambda_1C_1',\qquad 
C_0=(x+\gamma_1)C_1'+\gamma_2C_2,
\]
we obtain
\[
C_2\tau^2E_1
=
(\lambda_2C_2+\lambda_1C_1')\tau E_1
+
\bigl((x+\gamma_1)C_1'+\gamma_2C_2\bigr)E_1.
\]
Rearranging and factoring $C_2$ on the left gives
\begin{equation}\label{eq:C2C1p}
C_2\bigl(\tau^2E_1-\lambda_2\tau E_1-\gamma_2E_1\bigr)
=
C_1'\bigl(\lambda_1\tau E_1+(x+\gamma_1)E_1\bigr).
\end{equation}
Since $C_1'/C_2=h$ (see Part (2) of Corollary~\ref{cor:C0C1C2}) and $E_2$ is defined in Proposition~\ref{prop:Admissible_Basis} as
\[
E_2:=h\bigl(\lambda_1\tau E_1+(x+\gamma_1)E_1\bigr),
\]
the right-hand side of \eqref{eq:C2C1p} equals $C_2E_2$. As $C_2\neq0$ in $L[x,y]$, we cancel $C_2$ and obtain
\[
E_2=\tau^2E_1-\lambda_2\tau E_1-\gamma_2E_1
=(\tau^2-\lambda_2\tau-\gamma_2)E_1.
\]
This proves the first equality.

(2) From Lemma 3.6 we have
\[
\frac{1}{h}=f+m-\kappa  .
\]
By the definition of $E_2$ in Proposition 3.10,
\[
E_2=h\bigl(\lambda_1\tau E_1+(x+\gamma_1)E_1\bigr).
\]
Multiplying by $f$ gives
\[
fE_2
=
f h\bigl(\lambda_1\tau E_1+(x+\gamma_1)E_1\bigr)
=
\bigl(1-(m-\kappa)h\bigr)\bigl(\lambda_1\tau E_1+(x+\gamma_1)E_1\bigr).
\]
Expanding,
\[
fE_2
=
\lambda_1\tau E_1+(x+\gamma_1)E_1-(m-\kappa)h\bigl(\lambda_1\tau E_1+(x+\gamma_1)E_1\bigr).
\]
Since $h(\lambda_1\tau E_1+(x+\gamma_1)E_1)=E_2$, we obtain
\[
fE_2
=
\lambda_1\tau E_1+(x+\gamma_1)E_1-(m-\kappa)E_2.
\]
Now substitute $E_2=(\tau^2-\lambda_2\tau-\gamma_2)E_1$:
\[
fE_2
=
\lambda_1\tau E_1+(x+\gamma_1)E_1-(m-\kappa)(\tau^2-\lambda_2\tau-\gamma_2)E_1.
\]
Collecting the coefficients of $\tau^2,\tau,1$, we get
\[
fE_2
=
\Big[
-(m-\kappa)\tau^2
+\bigl(\lambda_1+(m-\kappa)\lambda_2\bigr)\tau
+\bigl(x+\gamma_1+(m-\kappa)\gamma_2\bigr)
\Big]E_1,
\]
which is exactly the desired formula.
\end{proof}


\subsection{Stable Conditions for $\tau $-action}

\[
    E: y^2 + a_1 x y + a_3 y = x^3 + a_2 x^2 + a_4 x + a_6, \quad a_i \in  \mathbb{F}_q,
\]
\[
    m = \frac{\eta - \beta^q}{\theta - \alpha^q} = \frac{\eta - \beta^{\vee}}{\theta - \alpha} = \frac{\beta^{\vee} - \beta^q}{\alpha - \alpha^q}.
\]

Recall
    \[
    \gamma_2 =  - \kappa^q - m - a_1
    \]
    and 
    \[
    \gamma_1 =( \kappa^q + m + a_1) (m - \kappa) - \theta .
    \]
    
\begin{lem}\label{lem:stable}
      We set
    \[
    \Gamma_1 = -\frac{ 1}{h^{(1)}} T_1 ;\qquad \Gamma_2 =   \frac{  (x - \alpha)}{h^{(1)}}T_2.
    \]
    The $ \tau $-action is stable on the $ M_\phi $ if and only if both of two following conditions hold:

    (1) $ \Gamma_1 = C_2^{(1)} $, equivalently,
    \begin{equation}\label{eq:con_A1}
    - \chi_1^q=   \lambda_1^{q}   \lambda_2  + \gamma_1^{q} + \gamma_1 \tag{A1}
    \end{equation}
    and
    \begin{equation}\label{eq:con_A2}
        \chi_2^q  = (\lambda_1^{q}   \lambda_2  + \gamma_1^{q} ) \gamma_1  -\lambda_1^{q+1} \gamma_2 .\tag{A2}
    \end{equation}

    (2) $\Gamma_2 \in  \mathcal{L}(- \dot{V}^{(1)}- \ddot{V}^{(1)} - V^\vee  +  5 \infty )$, equivalently, 
    \begin{equation}\label{eq:con_B1}
        - \uk \chi_1^q = \lambda_1^{q+1} + \uk (\lambda_1^{q}  \lambda_2  + \gamma_1^{q} ) + \vk \tag{B1}
    \end{equation}
    and
     \begin{equation}\label{eq:con_B2}
        \uk \chi_2^q =  (\lambda_1^{q} \lambda_2  + \gamma_1^{q} ) \vk - \lambda_1^{q+1} \alpha .\tag{B2}
     \end{equation}
    where $ \uk=\kappa^q-\kappa$, and $ \vk=(\beta^\vee)^q-\beta^\vee-\kappa^q\alpha^q+\kappa\alpha$ (the same in Lemma \ref{lem:chi2qg2g1}).
\end{lem}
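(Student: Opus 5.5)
Stability of the $\tau$-action means $\tau(M_\phi)\subseteq M_\phi$. By Proposition~\ref{prop:Admissible_Basis}(1) we have $M_\phi=\A_L E_1\oplus V^{-1}\A_L E_2$, and $\tau$ is $\tau$-semilinear, with $\tau(aE_i)=a^{(1)}\tau E_i$ and $\A_L^{(1)}=\A_L$, $(V^{-1}\A_L)^{(1)}=(V^{(1)})^{-1}\A_L$. Stability therefore reduces to two membership conditions. The row for $\tau E_1$ is automatic, since $\frac{1}{\lambda_1 h}\in V^{-1}\A_L$ by $\frac1h=f_\alpha+m-\kappa$. What remains is to check that
\[
\frac{T_1}{\lambda_1}\cdot (V^{(1)})^{-1}\A_L\subseteq \A_L,\qquad \frac{T_2}{\lambda_1}\cdot (V^{(1)})^{-1}\A_L\subseteq V^{-1}\A_L .
\]
Since $(V^{(1)})^{-1}\A_L=(1,f_\alpha^{(1)})_{\A_L}$ and $(f_\alpha^{(1)})$ has a simple pole exactly at $V^{(1)}$ among finite points, these become divisor conditions: $T_1$ must be regular on the affine part and vanish at $V^{(1)}$, and $T_2$ must have poles only at $V$ and vanish at $V^{(1)}$. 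I plan to rewrite both conditions in terms of $\Gamma_1,\Gamma_2$, using $(h^{(1)})=V^{(1)}+\infty-\dot V^{(1)}-\ddot V^{(1)}$ from Lemma~\ref{lem:motiverelation}(2).

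\textbf{Condition (1).} We have $\Gamma_1=-T_1/h^{(1)}=(x+\lambda_1^q\lambda_2+\gamma_1^q)(x+\gamma_1)-\lambda_1^{q+1}\gamma_2$, a monic quadratic in $x$. Now $T_1=-h^{(1)}\Gamma_1$ is integral and vanishes at $V^{(1)}$ exactly when $\Gamma_1$ vanishes at the poles $\dot V^{(1)},\ddot V^{(1)}$ of $h^{(1)}$. Since $\Gamma_1$ is a monic polynomial in $x$ alone, it then vanishes also at $\dot V^{(1)\vee},\ddot V^{(1)\vee}$. Comparing with Lemma~\ref{lem:motiverelation}(4) (twisted) and Corollary~\ref{cor:C0C1C2}(1), this forces $\Gamma_1=C_2^{(1)}=x^2-\chi_1^qx+\chi_2^q$. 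The case $\dot\alpha=\ddot\alpha$ should be handled by counting multiplicity. Matching the coefficients of $x$ and of $1$ then gives \eqref{eq:con_A1} and \eqref{eq:con_A2} directly. Conversely, if $\Gamma_1=C_2^{(1)}$ then $T_1=-h^{(1)}C_2^{(1)}=-C_1'^{(1)}\in\A_L$, and this vanishes at $V^{(1)}$ because the line $C_1'$ passes through $V$.

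\textbf{Condition (2).} We have $\Gamma_2=(x-\alpha)T_2/h^{(1)}=(x-\alpha)\lambda_1^{q+1}+(x+\lambda_1^q\lambda_2+\gamma_1^q)(x-\alpha)/h$. Using $(x-\alpha)/h=y-\beta^\vee-\kappa(x-\alpha)=\mathfrak L$, this is a polynomial of pole order at most $5$ at $\infty$. The requirement that $T_2$ have poles only at $V$ and zero at $V^{(1)}$ translates, after multiplying by $(x-\alpha)/h^{(1)}$, into $\Gamma_2$ vanishing at $\dot V^{(1)},\ddot V^{(1)}$ and at $V^\vee$. The zero at $V^\vee$ comes from $(x-\alpha)$ cancelling the pole at $V$ and its conjugate. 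This is precisely $\Gamma_2\in\mathcal L(-\dot V^{(1)}-\ddot V^{(1)}-V^\vee+5\infty)$.

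To extract \eqref{eq:con_B1} and \eqref{eq:con_B2}, I would not evaluate at the points. Instead I would note that this Riemann--Roch space has dimension $2$ and equals $\mathfrak L\cdot\mathcal L(-\dot V^{(1)}-\ddot V^{(1)}+\dots)$, spanned by $\Upsilon_1,\Upsilon_2$ of the form $C_2^{(1)}\cdot(\text{line})$ and the like. A cleaner route is to reduce $\Gamma_2$ modulo $C_2^{(1)}$: write $\mathfrak L=\mathfrak L^{(1)}+\uk(x-\ldots)$-type corrections using $\mathfrak L-\mathfrak L^{(1)}=(\kappa^q-\kappa)x+\vk$, i.e. $\uk x+\vk$ up to sign. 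Since $\mathfrak L^{(1)}$ vanishes on $\dot V^{(1)},\ddot V^{(1)}$, the condition becomes that an explicit polynomial in $x$ of degree $2$, divided by the leading scalar, equals $C_2^{(1)}$ times a constant. Matching coefficients gives (B1) and (B2), with Lemma~\ref{lem:chi2qg2g1} used to compare constants.

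The main obstacle is this last bookkeeping. One must correctly identify the combination $\uk x+\vk$ as $\mathfrak L-\mathfrak L^{(1)}$ modulo $x-\alpha$ factors, and keep track of the extra vanishing at $V^\vee$, so that exactly two scalar conditions remain.
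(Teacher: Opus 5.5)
Your proposal is correct and is essentially the paper's proof. It uses the same reduction of stability to conditions on $T_1,T_2$ and the same identification $\Gamma_1=C_2^{(1)}$ from the zeros at $\dot V^{(1)},\ddot V^{(1)}$. For (2) it reaches the same quadratic identity $\lambda_1^{q+1}(x-\alpha)+(x+\lambda_1^q\lambda_2+\gamma_1^q)(\uk x+\vk)=\uk C_2^{(1)}$; the paper gets there by evaluating at $\dot V^{(1)},\ddot V^{(1)}$ and eliminating $\dot\beta^q$ via $\mathfrak L^{(1)}$, which is exactly your observation $\mathfrak L-\mathfrak L^{(1)}=\uk x+\vk$.

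Two slips are cosmetic. The reduction is modulo $\mathfrak L^{(1)}$ (equivalently, evaluation at those two points), not modulo $C_2^{(1)}$. Lemma~\ref{lem:chi2qg2g1} is not needed here, since matching coefficients gives \eqref{eq:con_B1} and \eqref{eq:con_B2} directly.
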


\begin{proof}
  From Proposition~\ref{prop:Admissible_Basis}, $ M_\phi $ is generated by $ E_1, E_2, f_{\alpha} E_2 $ as an $\A_L $-module. 
It suffices to check $ \tau E_1, \tau E_2, \tau(f_{\alpha} E_2)  $ are contained in $M_\phi$. Check the expressions of $ \tau E_1, \tau E_2, \tau(f_{\alpha} E_2)  $  in Proposition~\ref{prop:Admissible_Basis}, it is equivalent to $ T_1 \in \A_L $ and $ T_2 \in V^{-1} A_L $.
Explicitly, 
\begin{equation}
    \label{eq:DefG1}
    T_1 = h^{(1)}\lambda_1^{q+1} \gamma_2   -  h^{(1)} \left( x +\lambda_1^q   \lambda_2  + \gamma_1^q  \right)  (x + \gamma_1)  \in \mathcal{L}(3\infty),
\end{equation}
and 
\begin{equation}
    \label{eq:DefG2}
    T_2 =    \lambda_1^{q+1} h^{(1)}   +  \frac{h^{(1)}}{  h}   \left( x+\lambda_1^q   \lambda_2  + \gamma_1^q  \right) \in \mathcal{L}(V +  2 \infty ) .
\end{equation}

(1)
Observing that \eqref{eq:DefG1} is equivalent to 
\[
     \lambda_1^{q+1} \gamma_2   -  \left( x +\lambda_1^{q}   \lambda_2  + \gamma_1^q  \right)  (x + \gamma_1)  \in \mathcal{L}(4 \infty + V^{(1)} - \dot{V}^{(1)}- \ddot{V}^{(1)} ) .
\]
Define 
\[
    \Gamma_1 = -\frac{1}{h^{(1)}} T_1  = x^2 + (\lambda_1^{q}   \lambda_2  + \gamma_1^{q} + \gamma_1 ) x +(\lambda_1^{q}   \lambda_2  + \gamma_1^{q} ) \gamma_1  -\lambda_1^{q+1} \gamma_2 . 
\]
The function $
    \Gamma_1  $ has zeros at $ \dot{V}^{(1)} $ and $ \ddot{V}^{(1)} $; so the divisor is given by 
    \[
    \dot{V}^{(1)}+\ddot{V}^{(1)} +\dot{V}^{(1)\vee}+\ddot{V}^{(1)\vee}- 4 \infty.
    \]
So $ \Gamma_1 = C_2^{(1)} $.

    Recalling $C_2 $ in Corollary~\ref{cor:C0C1C2}, 
    \[
    C_2^{(1)} =  x^2 - \chi_1^q x + \chi_2^q ,
    \]
    where 
    \[
    \chi_1 = \kappa^2 + a_1 \kappa - a_2 -\alpha; 
    \]
    and 
    \[
        \chi_2 = a_4 - 2 \kappa \beta^{\vee} + \kappa^2 \alpha - a_1 \beta^{\vee} - a_3 \kappa + a_2 \alpha + \alpha^2.
    \]
    
    Hence, comparing the coefficients of $C_2^{(1)}$ and $\Gamma_1$, we obtain 
    \[
    (\lambda_1^{q}   \lambda_2  + \gamma_1^{q} + \gamma_1 ) = -\chi_1^q = - (\kappa^{2q} + a_1 \kappa^q - a_2 - \alpha^q);
    \]
    and
    \begin{align*}
   (\lambda_1^{q}   \lambda_2  + \gamma_1^{q} ) \gamma_1  -\lambda_1^{q+1} \gamma_2 & = \chi_2^q = a_4 - 2 \kappa^q (\beta^{\vee})^q + \kappa^{2q} \alpha^q - a_1 (\beta^{ \vee})^ q - a_3 \kappa^q + a_2 \alpha^q + \alpha^{2 q}.
    \end{align*}
    


(2) We know
 \[
    (x - \alpha) = V + V^\vee - 2 \infty 
 \]
and
 \[
    \bigl( \frac{x - \alpha}{h^{(1)}} \bigr) = V^{\vee} + V  +    \dot{V}^{(1)}+\ddot{V}^{(1)}- 3 \infty  - V^{(1)}.
 \]
Equation~\eqref{eq:DefG2} implies that  
\begin{equation}
    \label{eq:DefGam2}
    \Gamma_2= T_2  \frac{  (x - \alpha)}{h^{(1)}}  = (x - \alpha) \lambda_1^{q+1}   + \frac{x-\alpha}{h} \left( x+\lambda_1^q   \lambda_2  + \gamma_1^q  \right)    
\end{equation}
is contained in
$ \mathcal{L}(5 \infty - V^{\vee}  - \dot{V}^{(1)}- \ddot{V}^{(1)} ) $.
    Observing the above function $ \Gamma $ has two zeros at $\dot{V}^{(1)} $ and $\ddot{V}^{(1)} $, i.e.,
    \[
    \Gamma_2 (\dot{V}^{(1)} ) = \Gamma_2 (\ddot{V}^{(1)} ) = 0 . 
    \]
    Substituting $\dot{V}^{(1)} = ( \dot{\alpha}^q , \dot{\beta}^q )$ and $\ddot{V}^{(1)} = ( \ddot{\alpha}^q , \ddot{\beta}^q )$, we get two-point interpolation equations 
    \begin{equation}
      \label{eq:eqdV1}
      \lambda_1^{q+1} r_1+  ( \dot{\alpha}^q + \lambda_1^q   \lambda_2  + \gamma_1^q  ) = 0
    \end{equation}
    \begin{equation}
      \label{eq:eqddV1}
      \lambda_1^{q+1} r_2 +  ( \ddot{\alpha}^q + \lambda_1^q   \lambda_2  + \gamma_1^q  ) = 0
    \end{equation}
    where
    \[
    r_1 = h(\dot{V}^{(1)}),\qquad
    r_2 = h(\ddot{V}^{(1)}).
    \]
    This two-point interpolation equation is the core for eliminating $\dot{V},\ddot{V}$ later.

    At the point $\dot{V}^{(1)} =(\dot{\alpha}^q,\dot{\beta}^q)$,
    \[
        \dot{\beta}^q= (\beta^{\vee})^q  +\kappa^q  (\dot{\alpha}^q-\alpha^q  ).
    \]
    Recall
    \[
    h(x,y)=\frac{x-\alpha }{y - (\beta^{\vee}) -\kappa (x-\alpha )}.
    \]
    Hence
    \begin{equation}\label{eq:r1V}
        r_1 = h(\dot{V}^{(1)}) = \frac{ \dot{\alpha}^q-\alpha }{\dot{\beta}^q - (\beta^{\vee}) -\kappa (\dot{\alpha}^q-\alpha )}.
    \end{equation}
    According to \eqref{eq:DefL}, the equation (i.e., the denominator of $h(x,y)^{(1)}$)
    \[ 
    \mathfrak{L}^{(1)} = y - (\beta^{\vee})^q -\kappa^q (x-\alpha^q ) = 0 
    \]  
    represents the line equation of $ V^{\vee (1)} $, $ \dot{V}^{(1)} $ and $ \ddot{V}^{(1)} $. Therefore, using the fact that the point $\dot{V}^{(1)}$ lies on $\mathfrak{L}^{(1)} =0 $, we can  replace $\dot{\beta}^q$ with $(\beta^{\vee})^q  +\kappa^q  (\dot{\alpha}^q-\alpha^q  ) $ in the Equation~\eqref{eq:r1V}.
    Then,  
    \begin{align*}
      r_1 &= h(\dot{V}^{(1)}) \\
      &= \frac{ \dot{\alpha}^q-\alpha }{(\kappa^q  -\kappa )\dot{\alpha}^q + (\beta^{\vee})^q  - (\beta^{\vee})  -\kappa^q  \alpha^q  +\kappa \alpha } \\
      & = \frac{ \dot{\alpha}^q-\alpha }{\uk \dot{\alpha}^q + \vk}.
    \end{align*}
    where we adopt the assumptions
    \begin{align*}
        \uk & = \kappa^q  -\kappa,  \\
        \vk &= (\beta^{\vee})^q  - (\beta^{\vee})  -\kappa^q  \alpha^q  +\kappa \alpha . 
    \end{align*}
    Thus, the equations \eqref{eq:eqdV1} and \eqref{eq:eqddV1}   
    can be also expressed as
    \[
        \lambda_1^{q+1} (\dot{\alpha}^q-\alpha ) +  ( \dot{\alpha}^q + \lambda_1^q   \lambda_2  + \gamma_1^q  ) (\uk\dot{\alpha}^q+\vk) = 0 ,
    \]
    and 
    \[
        \lambda_1^{q+1}(\ddot{\alpha}^q-\alpha) +  ( \ddot{\alpha}^q + \lambda_1^q   \lambda_2  + \gamma_1^q  ) (\uk\ddot{\alpha}^q+\vk)= 0 .
    \]
    
    Let
    \[
        P(x) = \lambda_1^{q+1} (x - \alpha) +  ( x + \lambda_1^q  \lambda_2  + \gamma_1^q  ) (\uk x + \vk) ,
    \]
    i.e., 
    \[
        P(x) = \uk x^2 + (\lambda_1^{q+1} + \uk (\lambda_1^q  \lambda_2  + \gamma_1^q ) + \vk ) x  + (\lambda_1^q  \lambda_2  + \gamma_1^q ) \vk - \lambda_1^{q+1} \alpha.
    \]
    
    Then $P(\dot{\alpha}^q)=P(\ddot{\alpha}^q)=0$. Since $C_2^{(1)}(x)$ has $\dot{\alpha}^q,\ddot{\alpha}^q$ as its roots, and therefore, 
    \[
        P(x)= \uk C_2^{(1)}(x).
    \]
    Comparing the coefficients on both sides, the result emerges.
\end{proof}
 
Consequently, it follows that 
 \begin{equation}\label{eq:con_C1}
         \lambda_1^{q+1} =  \uk \gamma_1 - \vk \tag{C1}
 \end{equation}
 where $ \uk=\kappa^q-\kappa$, and $ \vk $ is defined in Lemma \ref{lem:chi2qg2g1}, and 
   \begin{equation}\label{eq:con_C2}
        \lambda_1^{q} \lambda_2 = - (\kappa^q)^2 - a_1 \kappa^q + a_2 + \alpha^q -  \gamma_1^{q} - \gamma_1 = - \chi_1^q -\gamma_1^{q} - \gamma_1 .\tag{C2}
   \end{equation}


 \subsection{Equivalence of the Conditions}
 
 Our next task is to derive the explicit formula for $ T_2 $ (or $ \Gamma_2 $). Direct substituting \eqref{eq:con_C1}\eqref{eq:con_C2} to \eqref{eq:org_G2} may give an expression, but loss much more geometry information. Instead, we solve the expression problem by comparing $T_2$ with $ C_2^{(2)} $.

\begin{notation}
We introduce the functions 
     \[
      \Upsilon_1= \frac{x- \alpha^q}{C_1'^{(1)} } - \frac{\alpha- \alpha^q}{C_1'^{(1)}(\alpha, \beta^\vee) } , \qquad \Upsilon_2 = \frac{(x- \alpha^q)^2}{C_1'^{(1)}} -  \frac{(\alpha- \alpha^q)^2}{C_1'^{(1)}(\alpha, \beta^\vee)}  .
    \]
\end{notation}

From the lemma \ref{lem:m2}
    \[
        \beta^q = m (\alpha^q - \alpha) + \beta^{\vee} ,
    \]
    then
    \begin{align}
        \frac{C_1'^{(1)}(\alpha,\beta^\vee)}{\alpha-\alpha^q} &= \frac{\beta^{\vee} + (\kappa^q + a_1) (\alpha - \alpha^q) - \beta^q}{\alpha-\alpha^q} \nonumber \\
        & = \frac{(\kappa^q + a_1) (\alpha - \alpha^q) -  m (\alpha^q - \alpha)}{\alpha-\alpha^q} = \kappa^q + a_1 + m \nonumber  \\
        &= - \gamma_2 .\label{eq:Cgamma2}
    \end{align}

The equality \eqref{eq:linedVvddVv} deduces that
\[
    \frac{C_2^{(1)} (x-\alpha^q)}{C_1'^{(1)}}= \frac{  x-\alpha^q }{ h^{(1)} } =y-(\beta^\vee)^q-\kappa^q(x-\alpha^q).
\]
    
We can directly expand $C_2^{(1)}  \Upsilon_1$ and $C_2^{(1)}  \Upsilon_2$ as follows.
    \begin{align}
        \label{eq:C21G1}
        C_2^{(1)}  \Upsilon_1 & = C_2^{(1)}  \frac{x- \alpha^q}{C_1'^{(1)} } - C_2^{(1)} \frac{\alpha- \alpha^q}{C_1'^{(1)}(\alpha, \beta^\vee) } \notag \\
        & = (y - (\beta^{\vee})^q - \kappa^q (x- \alpha^q))- (x^2 - \chi_1^q x + \chi_2^q)  \frac{1}{(m+\kappa^{q}+a_{1}) } \notag \\
        & = \frac{1}{\gamma_2 } x^2 + y - \left(\kappa^q + \frac{\chi_1^q}{\gamma_2}\right) x  + \kappa^q \alpha^q - (\beta^{\vee})^q + \frac{\chi_2^q}{\gamma_2}.
    \end{align}
   Similarly, we have 
    \begin{align}
        \label{eq:C21G2}
        C_2^{(1)} \Upsilon_2 & = C_2^{(1)}  \frac{(x- \alpha^q)^2}{C_1'^{(1)} } - C_2^{(1)}  \frac{(\alpha- \alpha^q)^2}{C_1'^{(1)}(\alpha, \beta^\vee)} \notag \\
        & = (x- \alpha^q) (y - (\beta^{\vee})^q - \kappa^q (x- \alpha^q)) -  (x^2 - \chi_1^q x + \chi_2^q)  \frac{\alpha- \alpha^q}{m+\kappa^{q}+a_{1}} \notag \\
        & = xy + \left(- \kappa^q + \frac{\alpha - \alpha^q}{\gamma_2}\right) x^2 - \alpha^q y + \left(2 \kappa^q \alpha^q - (\beta^{\vee})^q - \frac{(\alpha-\alpha^q)\chi_1^q}{\gamma_2 } \right) x \notag \\
        & \quad + \alpha^q ( \beta^{\vee})^q - \kappa^q (\alpha^q)^2 + \frac{(\alpha-\alpha^q) \chi_2^q}{\gamma_2}
    \end{align}

\begin{lem}\label{lem:RiemannUpsilon}
    The Riemann-Roch space $ \mathcal{L}(-V^\vee + \infty + \dot{V}^{ (1)\vee }+ \ddot{V}^{ (1)\vee }) $ is spanned by $ \Upsilon_1 $ and $ \Upsilon_2 $.
\end{lem}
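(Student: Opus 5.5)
The plan is to combine a dimension count with a direct divisor computation. Put
\[
D:=-V^\vee+\infty+\dot V^{(1)\vee}+\ddot V^{(1)\vee}.
\]
Then $\deg D=2>0$, and since $E$ has genus one, Riemann--Roch gives $\dim_L\mathcal L(D)=2$. So it suffices to show that $\Upsilon_1,\Upsilon_2\in\mathcal L(D)$ and that they are $L$-linearly independent.

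The first step is the divisor of $C_1'^{(1)}=y+(\kappa^q+a_1)(x-\alpha^q)-\beta^q$. By the proof of Corollary~\ref{cor:C0C1C2}, the line $C_1'=0$ passes through $V,\dot V^\vee,\ddot V^\vee$. Twisting by Frobenius (which acts only on $L$-coefficients), we get
\[
(C_1'^{(1)})=V^{(1)}+\dot V^{(1)\vee}+\ddot V^{(1)\vee}-3\infty.
\]
Since $(x-\alpha^q)=V^{(1)}+V^{(1)\vee}-2\infty$, this yields
\[
\Bigl(\tfrac{x-\alpha^q}{C_1'^{(1)}}\Bigr)=V^{(1)\vee}+\infty-\dot V^{(1)\vee}-\ddot V^{(1)\vee},
\qquad
\Bigl(\tfrac{(x-\alpha^q)^2}{C_1'^{(1)}}\Bigr)=V^{(1)}+2V^{(1)\vee}-\infty-\dot V^{(1)\vee}-\ddot V^{(1)\vee}.
\]
Hence both functions lie in $\mathcal L(\infty+\dot V^{(1)\vee}+\ddot V^{(1)\vee})$, and so do their differences with constants. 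By \eqref{eq:Cgamma2}, $C_1'^{(1)}(\alpha,\beta^\vee)=-\gamma_2(\alpha-\alpha^q)$. Under the genericity assumptions $\gamma_2\neq0$ and $\alpha\neq\alpha^q$, which are already implicit in the definitions of $\Upsilon_1,\Upsilon_2$, the subtracted constants are exactly the values of the two functions at $V^\vee=(\alpha,\beta^\vee)$. Therefore $\Upsilon_1$ and $\Upsilon_2$ vanish at $V^\vee$, and so they lie in $\mathcal L(D)$.

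For this last step I must check that $V^\vee$ is not one of the poles, i.e.\ that $V^\vee\neq\dot V^{(1)\vee},\ddot V^{(1)\vee}$. This is equivalent to $C_1'^{(1)}(V^\vee)\neq0$, which is again the nonvanishing of $\gamma_2(\alpha-\alpha^q)$. As a cross-check on holomorphy, I will use the expansions \eqref{eq:C21G1} and \eqref{eq:C21G2}. They show that $C_2^{(1)}\Upsilon_1\in\mathcal L(4\infty)$ and $C_2^{(1)}\Upsilon_2\in\mathcal L(5\infty)$. So all finite poles of $\Upsilon_i$ lie among the zeros of $C_2^{(1)}$, namely $\dot V^{(1)},\ddot V^{(1)},\dot V^{(1)\vee},\ddot V^{(1)\vee}$, and the divisor computation above excludes $\dot V^{(1)}$ and $\ddot V^{(1)}$.

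For linear independence I compare behaviour at $\infty$. The function $\frac{x-\alpha^q}{C_1'^{(1)}}$ vanishes at $\infty$, so $\Upsilon_1(\infty)=\frac{1}{\gamma_2}\neq0$ and $\Upsilon_1$ is a nonzero function regular at $\infty$. By contrast, $\Upsilon_2$ has a simple pole at $\infty$, since it has the leading term $xy/C_2^{(1)}$ from \eqref{eq:C21G2}. A relation $c_1\Upsilon_1+c_2\Upsilon_2=0$ therefore forces $c_2=0$, and then $c_1=0$. With the dimension count, this shows that $\Upsilon_1,\Upsilon_2$ span $\mathcal L(D)$.

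I expect the main obstacle to be the degenerate cases: $\gamma_2=0$, $\alpha\in\mathbb F_q$, or a coincidence such as $V^\vee=\dot V^{(1)\vee}$. In these cases the constants defining $\Upsilon_i$ are undefined, or the zero at $V^\vee$ cancels a pole. I would first try to rule them out from the standing assumption $\lambda_1\neq0$ together with \eqref{eq:con_C1}. Failing that, I would state them as explicit generic hypotheses, as the paper already does for $P_3\neq0$.
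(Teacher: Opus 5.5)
Your proof is correct and is essentially the paper's argument written out in full. The paper also computes divisors to place $\Upsilon_1,\Upsilon_2$ in $\mathcal L(-V^\vee+\infty+\dot V^{(1)\vee}+\ddot V^{(1)\vee})$, with $\Upsilon_1$ regular at $\infty$ and $\Upsilon_2$ having a pole there, and then concludes from linear independence plus the genus-one dimension count. The hypothesis $\gamma_2\neq0$ you flag is tacit in the paper and is needed just to define the $\Upsilon_i$. It cannot be derived from $\lambda_1\neq0$ and (C1): $\gamma_2=0$ only says $V\in\{\dot V^{(1)},\ddot V^{(1)}\}$, which is compatible with $\pi(\kappa)\neq0$, so stating it as an explicit hypothesis is the right choice. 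By contrast, $\alpha\neq\alpha^q$ holds automatically because $\alpha\notin\mathbb F_q$.
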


\begin{proof}
    By direct computation, it follows that 
    \[
    ( \Upsilon_1) - V^\vee + \dot{V}^{ (1)\vee }+\ddot{V}^{ (1)\vee } \geqslant 0 ,
    \]
    and 
    \[
    ( \Upsilon_2) - V^\vee + \infty + \dot{V}^{ (1)\vee }+\ddot{V}^{ (1)\vee } \geqslant 0 .
    \]
    Therefore, $ \Upsilon_1$ and $ \Upsilon_2 $ are linearly independent and form a basis for $ \mathcal{L}(-V^\vee + \infty + \dot{V}^{ (1)\vee }+ \ddot{V}^{ (1)\vee }) $.
\end{proof}

\begin{lem}
    \label{lem:C1C2A2}
    Set $ \muk := \uk \gamma_2 + \alpha^q - \alpha $.
    Under conditions \eqref{eq:con_C1} and \eqref{eq:con_C2},  with the notation established above, we have 
    \begin{equation}
        \label{eq:C2u0}
        \muk =-\frac{R_0 C_1'^{(1)}(\alpha,\beta^\vee)}{\alpha-\alpha^q} = \lambda_1^q \lambda_2 + \gamma_1^q + \alpha^q = \alpha^q - \chi_1^q - \gamma_1.
    \end{equation}
    where 
    \[
        R_0 = \kappa^q - \kappa - \frac{\alpha-\alpha^q}{\gamma_2} .
    \]
\end{lem}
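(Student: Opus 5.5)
The plan is to prove the three equalities in \eqref{eq:C2u0} one at a time. Only the last equality uses the stability conditions \eqref{eq:con_C1}, \eqref{eq:con_C2}. The middle one should turn out to be an identity that holds for every $\kappa$, coming from the geometry of the line $\mathfrak L_m$.

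\emph{First equality.} I would use \eqref{eq:Cgamma2}, which says
\[
\frac{C_1'^{(1)}(\alpha,\beta^\vee)}{\alpha-\alpha^q}=-\gamma_2 .
\]
Substituting this gives
\[
-\frac{R_0\,C_1'^{(1)}(\alpha,\beta^\vee)}{\alpha-\alpha^q}=R_0\gamma_2 .
\]
By the definition of $R_0$,
\[
R_0\gamma_2=(\kappa^q-\kappa)\gamma_2-(\alpha-\alpha^q)=\uk\gamma_2+\alpha^q-\alpha=\muk .
\]

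\emph{Last equality.} Condition \eqref{eq:con_C2} reads $\lambda_1^q\lambda_2=-\chi_1^q-\gamma_1^q-\gamma_1$. Adding $\gamma_1^q+\alpha^q$ to both sides gives
\[
\lambda_1^q\lambda_2+\gamma_1^q+\alpha^q=\alpha^q-\chi_1^q-\gamma_1 .
\]

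\emph{Middle equality.} It remains to show $\muk=\alpha^q-\chi_1^q-\gamma_1$, which is equivalent to
\[
\chi_1^q=\alpha-\gamma_1-\uk\gamma_2 .
\]
I would first combine $\gamma_1$ and $\uk\gamma_2$ using $\gamma_2=-(\kappa^q+m+a_1)$; the factor $\kappa^q+m+a_1$ appears in both terms, giving
\[
\gamma_1+\uk\gamma_2=(\kappa^q+m+a_1)\bigl((m-\kappa)-(\kappa^q-\kappa)\bigr)-\theta=(\kappa^q+m+a_1)(m-\kappa^q)-\theta .
\]
Expanding the product, the cross terms $\pm m\kappa^q$ cancel, so
\[
\gamma_1+\uk\gamma_2=m^2+a_1m-\kappa^{2q}-a_1\kappa^q-\theta .
\]
Therefore
\[
\alpha-\gamma_1-\uk\gamma_2=\kappa^{2q}+a_1\kappa^q-(m^2+a_1m)+\theta+\alpha .
\]
Next I would invoke Lemma~\ref{lem:m2}(1), $\alpha+\alpha^q+\theta=m^2+a_1m-a_2$, to replace $m^2+a_1m$ by $\alpha+\alpha^q+\theta+a_2$. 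The terms $\theta$ and $\alpha$ then cancel, leaving $\kappa^{2q}+a_1\kappa^q-a_2-\alpha^q$. Since $a_2\in\mathbb F_q$ is fixed by the twist, this is exactly $\chi_1^q$ from the definition of $\chi_1$. This closes the chain of equalities.

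The only step that is more than bookkeeping is recognising that the middle equality does not depend on $\lambda_1,\lambda_2$ at all. Once it is reduced to the symmetric-function relation of Lemma~\ref{lem:m2}, it is routine. The main risk is sign slips, for instance in $\beta^\vee$ versus $\beta^q$, or in the $a_1$-terms when expanding $(\kappa^q+m+a_1)(m-\kappa^q)$, so I would check that expansion carefully.
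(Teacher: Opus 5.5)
Your proposal is correct and follows the paper's proof essentially step for step. It obtains the first equality from \eqref{eq:Cgamma2}, the identity $\muk=\alpha^q-\chi_1^q-\gamma_1$ from Lemma~\ref{lem:m2}(1) after combining $\gamma_1+\uk\gamma_2$, and the remaining equality directly from \eqref{eq:con_C2}; your remark that \eqref{eq:con_C1} is never actually needed also holds for the paper's argument.
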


\begin{proof}
    By assumption
    \[
   \muk = \uk\gamma_2 + \alpha^q -\alpha .
    \]
    Substituting $ \gamma_1 $ and $ \gamma_2 $ to obtain 
    \[
        \uk\gamma_2+\chi_1^q+\gamma_1 =- (\kappa^q - \kappa )(\kappa^q + a_1 + m) + \chi_1^q + (\kappa^q+m+a_1)(m-\kappa)-\theta.
    \]
    From Lemma \ref{lem:m2}
    \[
        \alpha+\alpha^q+\theta=m^2+a_1m-a_2
    \]
    we obtain 
    \[
        \uk\gamma_2+\chi_1^q+\gamma_1 = \alpha .
    \]
Therefore, 
\[
       \muk =  \alpha^q-\chi_1^q-\gamma_1.
    \]

    Condition \eqref{eq:con_C2} tells us 
    \[
        \lambda_1^{q} \lambda_2 +  \gamma_1^{q} + \alpha^q   = - \chi_1^q- \gamma_1+ \alpha^q.
    \]
    
    Thus, 
    \[
        \muk = \uk \gamma_2 + \alpha^q - \alpha = \lambda_1^q \lambda_2 + \gamma_1^q + \alpha^q = \alpha^q - \chi_1^q - \gamma_1.
    \]

   From \eqref{eq:Cgamma2}, we obtain 
    \begin{align}
        \label{eq:R0C111}
        \frac{R_0 C_1'^{(1)}(\alpha,\beta^\vee)}{\alpha-\alpha^q} = \left(\kappa^q - \kappa - \frac{\alpha-\alpha^q}{\gamma_2} \right)  (- \gamma_2) = - \uk \gamma_2 + \alpha - \alpha^q = -\muk.
    \end{align}
    Therefore, the above terms are equal to each other
    \begin{equation*}
        - \frac{R_0 C_1'^{(1)}(\alpha,\beta^\vee)}{\alpha-\alpha^q} = \uk \gamma_2 + \alpha^q - \alpha = \lambda_1^q \lambda_2 + \gamma_1^q + \alpha^q = \alpha^q - \chi_1^q - \gamma_1 = \muk.
    \end{equation*}

    \end{proof}

\begin{prop}
\label{prop:G2-C21U2}
Under conditions \eqref{eq:con_C1} and \eqref{eq:con_C2}, the following identity holds:
\[
    \Gamma_2  =  C_2^{(1)} \left( \Upsilon_2+ \muk \Upsilon_1\right).
\]
\end{prop}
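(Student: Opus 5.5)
The plan is to reduce both sides to an explicit comparison in the basis $xy, x^2, y, x, 1$ of $\mathcal{L}(5\infty)$, using the line function $\mathfrak{L}$ and its twist rather than raw coordinates. The goal is to show that the difference of the two sides is exactly the quadratic polynomial $P(x)-\uk C_2^{(1)}$ from the proof of Lemma~\ref{lem:stable}.

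\emph{Step 1: rewrite the left side.} By \eqref{eq:DefGam2} and $h=(x-\alpha)/\mathfrak{L}$ we have $(x-\alpha)/h=\mathfrak{L}=y-\beta^\vee-\kappa(x-\alpha)$. Lemma~\ref{lem:C1C2A2} gives $\lambda_1^q\lambda_2+\gamma_1^q=\muk-\alpha^q$. Hence
\[
\Gamma_2=\lambda_1^{q+1}(x-\alpha)+\mathfrak{L}\,(x-\alpha^q+\muk).
\]
Next I compare $\mathfrak{L}$ with $\mathfrak{L}^{(1)}=y-(\beta^\vee)^q-\kappa^q(x-\alpha^q)$. A one-line check gives $\mathfrak{L}-\mathfrak{L}^{(1)}=\uk x+\vk$.

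\emph{Step 2: rewrite the right side.} From \eqref{eq:linedVvddVv} twisted, as recorded before \eqref{eq:C21G1}, we have $C_2^{(1)}(x-\alpha^q)/C_1'^{(1)}=\mathfrak{L}^{(1)}$. By \eqref{eq:Cgamma2}, $C_1'^{(1)}(\alpha,\beta^\vee)=-\gamma_2(\alpha-\alpha^q)$. Therefore
\[
C_2^{(1)}\Upsilon_i=(x-\alpha^q)^{i-1}\mathfrak{L}^{(1)}+\frac{(\alpha-\alpha^q)^{i-1}}{\gamma_2}C_2^{(1)},\qquad i=1,2.
\]
This yields
\[
C_2^{(1)}(\Upsilon_2+\muk\Upsilon_1)=\mathfrak{L}^{(1)}(x-\alpha^q+\muk)+\frac{\alpha-\alpha^q+\muk}{\gamma_2}\,C_2^{(1)}.
\]
By the definition $\muk=\uk\gamma_2+\alpha^q-\alpha$, the last coefficient is exactly $\uk$.

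\emph{Step 3: compare.} Subtracting the two expressions, the $\mathfrak{L}^{(1)}$-terms cancel, and we are left with
\[
\Gamma_2-C_2^{(1)}(\Upsilon_2+\muk\Upsilon_1)=\lambda_1^{q+1}(x-\alpha)+(\uk x+\vk)(x+\lambda_1^q\lambda_2+\gamma_1^q)-\uk C_2^{(1)}=P(x)-\uk C_2^{(1)}(x).
\]
Here $P$ is the polynomial from the proof of Lemma~\ref{lem:stable}. So it suffices to show that \eqref{eq:con_B1} and \eqref{eq:con_B2} hold under \eqref{eq:con_C1} and \eqref{eq:con_C2}.

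\emph{Main obstacle: deriving (B1) and (B2) from (C1) and (C2).} I expect this verification to be the only real work.
\begin{itemize}
\item For \eqref{eq:con_B1}, substitute $\lambda_1^q\lambda_2+\gamma_1^q=-\chi_1^q-\gamma_1$ from \eqref{eq:con_C2} and $\lambda_1^{q+1}=\uk\gamma_1-\vk$ from \eqref{eq:con_C1}. Then \eqref{eq:con_B1} becomes $-\uk\chi_1^q=\uk\gamma_1-\vk-\uk\chi_1^q-\uk\gamma_1+\vk$, which is trivially true.
\item For \eqref{eq:con_B2}, the same substitutions reduce it to $\uk\chi_2^q=-(\chi_1^q+\gamma_1)\vk-(\uk\gamma_1-\vk)\alpha$. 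Using $\muk=\alpha^q-\chi_1^q-\gamma_1=\uk\gamma_2+\alpha^q-\alpha$, this becomes $\uk\chi_2^q=(\uk\gamma_2-\alpha)\vk-\uk\gamma_1\alpha+\vk\alpha=\uk(\gamma_2\vk-\alpha\gamma_1)$. That is exactly Lemma~\ref{lem:chi2qg2g1} multiplied by $\uk$.
\end{itemize}
If any sign in these manipulations misbehaves, the fallback is to compare the five coefficients directly using the explicit expansions \eqref{eq:C21G1} and \eqref{eq:C21G2}.
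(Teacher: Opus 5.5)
Your proof is correct, and it is organized differently from the paper's.

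\textbf{The paper's route.} It expands $\Gamma_2-C_2^{(1)}\Upsilon_2$ and $\muk C_2^{(1)}\Upsilon_1$ fully in the monomials $xy,x^2,y,x,1$ via \eqref{eq:C21G1} and \eqref{eq:C21G2}. It then checks the $x^2$, $y$, $x$ and constant coefficients one at a time, carrying the $\chi_1^q/\gamma_2$ and $\chi_2^q/\gamma_2$ terms through each comparison.

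\textbf{Your route.} You keep the functions $\mathfrak{L}$, $\mathfrak{L}^{(1)}$ and $C_2^{(1)}$ intact.
\begin{enumerate}
\item The $\mathfrak{L}^{(1)}$-terms cancel identically.
\item The $C_2^{(1)}/\gamma_2$ pieces collapse to $\uk C_2^{(1)}$ by the definition of $\muk$.
\item The whole difference is therefore exactly $P(x)-\uk C_2^{(1)}$.
\end{enumerate}

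\textbf{Shared ingredients.} Both arguments use Lemma~\ref{lem:C1C2A2}, \eqref{eq:Cgamma2}, the identity $\uk\gamma_2+\chi_1^q+\gamma_1=\alpha$, and Lemma~\ref{lem:chi2qg2g1}. Your two bullet computations are in effect the paper's $x$-coefficient and constant-term checks with the $1/\gamma_2$ terms stripped away. I checked the signs and they are right, so your fallback is not needed.

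\textbf{What your route buys.} It shows that, given \eqref{eq:con_C2}, the proposition is equivalent to \eqref{eq:con_B1} and \eqref{eq:con_B2}. It also shows these follow from \eqref{eq:con_C1} and \eqref{eq:con_C2} in two lines of algebra. So Part~2 of Theorem~\ref{thm:equivalence} does not really need the detour through Lemma~\ref{lem:RiemannUpsilon}.

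\textbf{No circularity.} You derive \eqref{eq:con_B1} and \eqref{eq:con_B2} directly rather than citing Theorem~\ref{thm:equivalence}, whose Part~2 depends on this proposition.

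\textbf{What the paper's route buys.} Its coefficient matching is purely mechanical: each monomial can be checked independently, for instance by computer, without needing to see the structural cancellation.
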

The proof is just the tedeous compuation, so we put it on the appendix.


\begin{thm}\label{thm:equivalence}
The conditions \eqref{eq:con_A1}\eqref{eq:con_A2}\eqref{eq:con_B1}\eqref{eq:con_B2} in Lemma \ref{lem:stable} is equivalent to \eqref{eq:con_C1}\eqref{eq:con_C2}.  In other words, \eqref{eq:con_C1}\eqref{eq:con_C2} is the necessary and sufficient conditions for stability of $\tau$-action. 

Moreover, when Conditions \eqref{eq:con_C1}\eqref{eq:con_C2} hold, we obtain the following formula
\[
    T_1 = - h^{(1)}C_2^{(1)}   = -  {C_1^{\prime (1)}};
\]
and
\begin{equation}\label{eq:G_2}
T_2 = \frac{T_1 }{(\alpha -x )} (\Upsilon_2 + \muk \Upsilon_1 ) =   \frac{C_1^{\prime (1)}} {  (x-\alpha )} (\Upsilon_2 + \muk \Upsilon_1 ) .
\end{equation}

\end{thm}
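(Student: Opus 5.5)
The plan is to prove the equivalence in two directions, treating the four conditions of Lemma~\ref{lem:stable} as polynomial identities in $\lambda_1^{q+1}$ and $\Lambda:=\lambda_1^q\lambda_2+\gamma_1^q$. Condition \eqref{eq:con_A1} reads $\Lambda=-\chi_1^q-\gamma_1$, which is exactly \eqref{eq:con_C2}.

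For the forward direction, I would substitute this value of $\Lambda$ into \eqref{eq:con_B1}. This gives $-\uk\chi_1^q=\lambda_1^{q+1}-\uk\chi_1^q-\uk\gamma_1+\vk$, i.e.\ $\lambda_1^{q+1}=\uk\gamma_1-\vk$, which is \eqref{eq:con_C1}. So (A1)+(B1) already imply (C1)+(C2).

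For the converse, assume \eqref{eq:con_C1} and \eqref{eq:con_C2}. Then (A1) and (B1) hold by reversing the computation above. I would check (A2) and (B2) by direct substitution, with Lemma~\ref{lem:chi2qg2g1} as the key input: $\chi_2^q=\gamma_2\vk-\alpha\gamma_1$. In (A2), the right side becomes
\[
(-\chi_1^q-\gamma_1)\gamma_1-(\uk\gamma_1-\vk)\gamma_2 .
\]
Using Lemma~\ref{lem:C1C2A2} in the form $\uk\gamma_2+\chi_1^q+\gamma_1=\alpha$, this equals $-\alpha\gamma_1+\gamma_2\vk=\chi_2^q$. In (B2), the right side becomes
\[
(-\chi_1^q-\gamma_1)\vk-(\uk\gamma_1-\vk)\alpha=(\alpha-\chi_1^q-\gamma_1)\vk-\uk\gamma_1\alpha=\uk\gamma_2\vk-\uk\alpha\gamma_1=\uk\chi_2^q,
\]
again by Lemma~\ref{lem:C1C2A2} and Lemma~\ref{lem:chi2qg2g1}. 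This establishes the equivalence. By Lemma~\ref{lem:stable} it is then equivalent to stability of the $\tau$-action.

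For the formulas, (A1) and (A2) give $\Gamma_1=C_2^{(1)}$, hence $T_1=-h^{(1)}C_2^{(1)}$. By \eqref{eq:C1'property} twisted once, this equals $-C_1'^{(1)}$. Proposition~\ref{prop:G2-C21U2} gives $\Gamma_2=C_2^{(1)}(\Upsilon_2+\muk\Upsilon_1)$. Since $\Gamma_2=(x-\alpha)T_2/h^{(1)}$, we get
\[
T_2=\frac{h^{(1)}C_2^{(1)}}{x-\alpha}(\Upsilon_2+\muk\Upsilon_1)=\frac{C_1'^{(1)}}{x-\alpha}(\Upsilon_2+\muk\Upsilon_1)=\frac{T_1}{\alpha-x}(\Upsilon_2+\muk\Upsilon_1),
\]
which is \eqref{eq:G_2}.

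The only delicate step is the converse verification of (A2) and (B2). It relies entirely on the identity $\uk\gamma_2+\chi_1^q+\gamma_1=\alpha$ (from Lemma~\ref{lem:m2}) together with Lemma~\ref{lem:chi2qg2g1}. I would double-check that the former holds unconditionally: it does, since in Lemma~\ref{lem:C1C2A2} it was derived before either condition was used.
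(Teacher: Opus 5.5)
Your proof is correct. The forward direction, the check of \eqref{eq:con_A1} and \eqref{eq:con_A2} under \eqref{eq:con_C1} and \eqref{eq:con_C2}, and the derivation of $T_1$ and $T_2$ match the paper's Parts 1, 3 and 4. The paper writes the (A2) step via Lemma~\ref{lem:C1C2A2} as $\uk\gamma_2-\alpha=\lambda_1^q\lambda_2+\gamma_1^q$; under \eqref{eq:con_C2} this is your identity $\uk\gamma_2+\chi_1^q+\gamma_1=\alpha$, which, as you note, needs only Lemma~\ref{lem:m2}.

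The genuine difference is the implication to (B). You verify \eqref{eq:con_B1} and \eqref{eq:con_B2} by direct substitution. The paper instead proves the geometric form of condition (2) of Lemma~\ref{lem:stable}, namely $\Gamma_2\in\mathcal{L}(-\dot{V}^{(1)}-\ddot{V}^{(1)}-V^\vee+5\infty)$, from Proposition~\ref{prop:G2-C21U2} and Lemma~\ref{lem:RiemannUpsilon}. It then relies on the equivalence stated in Lemma~\ref{lem:stable}.

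Your route is shorter and purely algebraic. It shows the equivalence reduces to the two identities $\chi_2^q=\gamma_2\vk-\alpha\gamma_1$ and $\uk\gamma_2+\chi_1^q+\gamma_1=\alpha$. It also avoids depending on the Riemann--Roch basis lemma. Nor does it need the step in Lemma~\ref{lem:stable} that turns vanishing at $\dot V^{(1)},\ddot V^{(1)}$ into $P=\uk C_2^{(1)}$, which implicitly uses $\dot\alpha\neq\ddot\alpha$.

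The paper's route, in return, obtains the decomposition $\Gamma_2=C_2^{(1)}(\Upsilon_2+\muk\Upsilon_1)$ together with the membership, and that decomposition is exactly what \eqref{eq:G_2} needs. Since you still invoke Proposition~\ref{prop:G2-C21U2} for the $T_2$ formula, your shortcut streamlines only the equivalence, not the theorem as a whole.
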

\begin{proof}

\noindent\textbf{Part 1: (A)+(B)$\implies$(C). }\\
It is trivial to see that Condition \eqref{eq:con_C1}\eqref{eq:con_C2} are derived from \eqref{eq:con_A1}\eqref{eq:con_A2}\eqref{eq:con_B1}\eqref{eq:con_B2}.

\noindent\textbf{Part 2: (C)$\implies$(B).} \\
From Lemma \ref{lem:RiemannUpsilon} and Proposition~\ref{prop:G2-C21U2}, we see   
\[
\Gamma_2 \in \mathcal{L}(- \dot{V}^{(1)}- \ddot{V}^{(1)} - V^\vee  +  5 \infty ).
\]

\noindent\textbf{Part 3: (C)$\implies$(A).}  
 
  The expression \eqref{eq:con_A1} is rewritted from \eqref{eq:con_C1}. 
    Combining with \eqref{eq:con_C1} yields
    \[
    \vk = \uk \gamma_1 - \lambda_1^{q+1} ;
    \]
    Substituting this into \eqref{lem:chi2qg2g1} yields
    \begin{equation}\label{eq:chi2q}
     \chi_2^q = \gamma_2 (\uk \gamma_1 - \lambda_1^{q+1} ) - \alpha \gamma_1 = (\uk \gamma_2 - \alpha) \gamma_1 - \lambda_1^{q+1} \gamma_2.
    \end{equation}
    
From Lemma \ref{lem:C1C2A2}, it follows 
\[
\uk \gamma_2   - \alpha = \lambda_1^q \lambda_2 + \gamma_1^q  .
\]
 Together with \eqref{eq:chi2q}, we find  
    \[
        \chi_2^q = (\lambda_1^q \lambda_2 + \gamma_1^q )\gamma_1 - \lambda_1^{q+1} \gamma_2,
    \]
    which confirms \eqref{eq:con_A2}.

\noindent\textbf{Part 4: New expressions for $ T_1$ and $T_2$.}  

Recall the definitions of $\Gamma_1$ and $\Gamma_2$, 
\[
    \Gamma_1 = -\frac{ 1}{h^{(1)}} T_1 = C_2^{(1)}; \qquad \Gamma_2 =   \frac{  (x - \alpha)}{h^{(1)}} T_2 .
\]
Thus, 
\[
    T_1 = - h^{(1)}C_2^{(1)}   = -  C_1^{\prime (1)} ;
\]
and 
\[
    T_2 = \frac{h^{(1)}}{  (x - \alpha)} \Gamma_2 .
\]
By Proposition~\ref{prop:G2-C21U2}, we obtain
\[
    T_2 = \frac{h^{(1)}}{ (x - \alpha)} ( C_2^{(1)}\Upsilon_2 + \muk   C_2^{(1)}\Upsilon_1)  = \frac{C_1'^{(1)}}{  (x - \alpha)} (\Upsilon_2 + \muk \Upsilon_1 ) .
\]

\end{proof}
 
The wedge product provides alterative expression of $ T_2 $. 
From the matrix form of the $\tau$-action,
\[
\tau E_1 = -\frac{x+\gamma_1}{\lambda_1} E_1 + \frac{1}{\lambda_1 h} E_2,\qquad
\tau E_2 = \frac{1}{\lambda_1}T_1 E_1 + \frac{1}{\lambda_1} T_2 E_2,
\]
we obtain
\[
\begin{aligned}
\tau E_1 \wedge \tau E_2
&= \left( -\frac{x+\gamma_1}{\lambda_1} E_1 + \frac{1}{\lambda_1 h} E_2 \right)
   \wedge ( \frac{1}{\lambda_1}T_1 E_1 + \frac{1}{\lambda_1} T_2 E_2) \\
&= - \frac{1}{\lambda_1^2}(x+\gamma_1) T_2  (E_1\wedge E_2)
   + \frac{1}{\lambda_1^2}\frac{T_1}{ h} (E_2\wedge E_1) \\
&= \frac{1}{\lambda_1^2}\left( - (x+\gamma_1)T_2 
        - \frac{T_1}{ h} \right) E_1 \wedge E_2.
\end{aligned}
\]

On the other hand, since 
\[
E_1\wedge E_2 = E_1 \wedge h\bigl(\lambda_1\tau E_1+(x+\gamma_1)E_1\bigr)= \lambda_1 h  \s \wedge \tau \s = \lambda_1 S_0.
\]

Since $\tau S_0 = - f_\alpha S_0$, we have
\[
\tau(E_1\wedge E_2) = \lambda_1^q \tau S_0 = -\lambda_1^q f_\alpha S_0
= -\lambda_1^{q-1} f_\alpha (E_1\wedge E_2).
\]

Hence the consistency of the $\tau$-action forces the identity 
\[
  (x+\gamma_1)T_2 
        + \frac{T_1}{ h}  =  \lambda_1^{q+1} f_\alpha.
\]
Equivalently, we obtain 
\[
  T_2 
           = \frac{1}{  (x+\gamma_1)} \left( \lambda_1^{q+1} f_\alpha -   \frac{T_1}{ h} \right) = \frac{1}{  (x+\gamma_1)} \left( \lambda_1^{q+1} f_\alpha +   \frac{C_1^{\prime{(1)}} }{ h} \right).
\]
 
 As a consequence of Proposition~\ref{prop:Admissible_Basis} and Theorem~\ref{thm:equivalence}, we obtain the following form of $ \tau $-action.  
 \begin{cor}
    The $\tau $-action on $ M_\phi$ can be explicitly rewritten as 
\[
\tau \begin{pmatrix} E_1 \\ E_2 \end{pmatrix}
=
\frac{1}{\lambda_1}\begin{pmatrix}
- (x+ \gamma_1)& \frac{1}{h} \\
-  C_1^{\prime (1)}   & \frac{1}{  (x+\gamma_1)} \left( \lambda_1^{q+1} f_\alpha +   \frac{C_1^{\prime{(1)}} }{ h} \right)
\end{pmatrix}
\begin{pmatrix} E_1 \\ E_2 \end{pmatrix}.
\]
 \end{cor}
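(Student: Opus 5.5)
The corollary should follow by substituting into the matrix of Proposition~\ref{prop:Admissible_Basis}(2) the two formulas for $T_1,T_2$ that hold once the stability conditions are in force. By Theorem~\ref{thm:equivalence}, stability of the $\tau$-action is equivalent to \eqref{eq:con_C1}\eqref{eq:con_C2}, so we work under these conditions throughout.

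The first row is unchanged from Proposition~\ref{prop:Admissible_Basis}, namely \eqref{eq:tauE1}. For the $(2,1)$ entry we invoke Theorem~\ref{thm:equivalence}, Part~4: since $\Gamma_1=C_2^{(1)}$, we get $T_1=-h^{(1)}C_2^{(1)}$. Twisting \eqref{eq:C1'property} gives $h^{(1)}C_2^{(1)}=C_1'^{(1)}$, hence $T_1=-C_1'^{(1)}$.

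For the $(2,2)$ entry we do not use \eqref{eq:G_2} directly. Instead we use the wedge-product identity derived just before the statement. Write $\tau E_1$ and $\tau E_2$ in terms of the basis and compute $\tau E_1\wedge\tau E_2$ as a multiple of $E_1\wedge E_2$. Compare this with $\tau(E_1\wedge E_2)$, computed from $E_1\wedge E_2=\lambda_1 S_0$ and $\tau S_0=-f_\alpha S_0$. This yields
\[
(x+\gamma_1)T_2+\frac{T_1}{h}=\lambda_1^{q+1}f_\alpha .
\]
Since $x+\gamma_1\neq 0$ in $\mathbf K_L$, we solve for $T_2$ and substitute $T_1=-C_1'^{(1)}$. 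This gives the entry $\frac{1}{x+\gamma_1}\bigl(\lambda_1^{q+1}f_\alpha+C_1'^{(1)}/h\bigr)$, and assembling the entries produces the stated matrix.

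The main point needing care is the wedge computation itself. It requires $\tau$ to act on $M_\phi\wedge M_\phi$ compatibly, via $\tau(a\wedge b)=\tau a\wedge\tau b$. It also uses the normalization $S_0=h\,\s\wedge\tau\s$ with sign $s=-1$, and these must give exactly $\tau(E_1\wedge E_2)=-\lambda_1^{q-1}f_\alpha\,E_1\wedge E_2$. The $(1/\lambda_1)^2$ factors from $\tau E_1\wedge\tau E_2$ must then cancel against $\lambda_1^{q-1}$ to produce $\lambda_1^{q+1}$.

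As a consistency check, one could verify that this expression for $T_2$ agrees with \eqref{eq:G_2} using Proposition~\ref{prop:G2-C21U2}. This check is not needed for the corollary.
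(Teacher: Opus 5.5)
Your proposal is correct and follows the paper's own argument. You take $T_1=-h^{(1)}C_2^{(1)}=-C_1'^{(1)}$ from Part~4 of Theorem~\ref{thm:equivalence}, and you get the $(2,2)$ entry by solving for $T_2$ in the wedge-product consistency identity $(x+\gamma_1)T_2+T_1/h=\lambda_1^{q+1}f_\alpha$, which comes from $E_1\wedge E_2=\lambda_1 S_0$ and $\tau S_0=-f_\alpha S_0$; the normalizations you flag are exactly the ones the paper relies on.
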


\subsection{Shtuka Data}
In this subsection, we derive the shtuka data associated with $ \phi $.  Roughly speaking, it can be viewed as a natural filtration on $ M_\phi $ with some additional property. We refer to Section 6.2 in \cite{Goss96} for more details. 

\begin{lem}
    For an element $ A E_1 + B E_2 \in  M_\phi $, we have 
    \[
        A E_1 + B E_2 = a \s + b \tau \s 
    \]
    with relation
    \[
        A = a -  \frac{b}{  \lambda_1 }   (x+\gamma_1) \qquad B = \frac{b}{ h \lambda_1 }
    \]
    and equivalently  
    \[
        a= A  +B h   (x+\gamma_1) \qquad b = B h \lambda_1 
    \]
\end{lem}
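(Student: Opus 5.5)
The plan is to substitute the definitions of the admissible basis into $A E_1 + B E_2$ and read off the coefficients of $\s$ and $\tau\s$. By Proposition~\ref{prop:Admissible_Basis}, $E_1 = \s$ and $E_2 = h(\lambda_1 \tau\s + (x+\gamma_1)\s)$. Hence
\[
A E_1 + B E_2 = \bigl(A + B h (x+\gamma_1)\bigr)\s + B h \lambda_1\, \tau\s .
\]
Setting $a = A + Bh(x+\gamma_1)$ and $b = Bh\lambda_1$ gives the second pair of formulas directly. The equality here is one of elements of $\mathbf{K}_L\otimes M_\phi$, so no integrality issue arises at this stage.

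For the inverse relations we solve this linear system. From $b = Bh\lambda_1$ we get $B = b/(h\lambda_1)$, which requires $\lambda_1 \neq 0$; I will comment on this below. Substituting into the first equation gives
\[
a = A + \frac{b}{\lambda_1}(x+\gamma_1), \qquad\text{so}\qquad A = a - \frac{b}{\lambda_1}(x+\gamma_1).
\]
Equivalently, one can use \eqref{eq:tauE1}, $\tau\s = \frac{1}{\lambda_1 h}E_2 - \frac{1}{\lambda_1}(x+\gamma_1)E_1$, and expand $a\s + b\tau\s$ in terms of $E_1, E_2$. This gives the same formulas and serves as a consistency check.

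The uniqueness implicit in ``with relation'' should also be noted. Since $\s, \tau\s$ are linearly independent over $\mathbf{K}_L$ (by rank two; $M_\phi\otimes \mathbf{K}_L$ is two-dimensional, being spanned by $\s,\tau\s$ because $C_2\neq 0$ lets one eliminate $\tau^2\s$), the coefficients $a, b$ are uniquely determined by the element. So the correspondence $(A,B)\leftrightarrow(a,b)$ is a bijection of pairs in $\mathbf{K}_L$.

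The only point needing care, and the main obstacle, is the nonvanishing of $\lambda_1$. This follows because $E_1\wedge E_2 = \lambda_1 S_0$ must generate a rank-one motive, and $E_1, E_2$ form a basis by Proposition~\ref{prop:Admissible_Basis}(1). Alternatively, if $\lambda_1 = 0$, then by \eqref{eq:con_C1} and the matrix form of the $\tau$-action (which already divides by $\lambda_1$), the element $E_2$ would lie in the $\mathbf{K}_L$-span of $\s$, contradicting rank two. I will cite this in one line.
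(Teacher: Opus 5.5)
Your argument is correct and matches the paper's proof: substitute $E_1=\s$ and $E_2=h\bigl(\lambda_1\tau\s+(x+\gamma_1)\s\bigr)$, read off $a$ and $b$, and solve the triangular system for $A$ and $B$. The one thing to fix is your justification of $\lambda_1\neq 0$, which is circular: Proposition~\ref{prop:Admissible_Basis}(1) and the matrix form of the $\tau$-action already divide by $\lambda_1$, and $E_2\in\mathbf{K}_L\s$ contradicts nothing by itself. Cite instead Remark~\ref{rm:lambda}, which gives $\lambda_1=-P_3$ from \eqref{eq:lam1P3}, together with $P_3\neq 0$ from Lemma~\ref{lem:P3}.
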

\begin{proof}
    Notice that \eqref{eq:DefE2} yields 
    \begin{align*}
      A E_1 + B E_2 &=   A \s + B h (\lambda_1 \tau \s + (x+\gamma_1) \s ) \\
      & =\left(A  +B h   (x+\gamma_1) \right)  \s+ B h \lambda_1 \tau \s \\
      & = a \s + b \tau \s .
    \end{align*}
    Conversely, we obtain $ B = \frac{b}{ h \lambda_1 }$ and then
    \[
        A = a - B h   (x+\gamma_1)     = a -  \frac{b}{  \lambda_1 }   (x+\gamma_1)  .
    \]
\end{proof}
As a consequence, we obtain the following result immediately. 
\begin{cor}\label{cor:inMphi}
    In particular, $ a \s + b \tau \s \in M_\phi $ if and only if 
    \[
        a, b \in (\dot{V}+\ddot{V})^{-1} \A_L; 
    \]
    and $a $ has the same principal part with $\frac{b}{  \lambda_1 }   (x+\gamma_1)  $ at both $ \dot{V} $ and  $ \ddot{V} $.
\end{cor}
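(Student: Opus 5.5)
The plan is to combine the decomposition $M_\phi = \A_L E_1 \oplus V^{-1}\A_L E_2$ of Proposition~\ref{prop:Admissible_Basis}(1) with the change-of-coordinates formulas of the preceding lemma. By that lemma, every element $a\s + b\tau\s$ with $a,b\in \mathbf{K}_L$ can be written uniquely as $AE_1 + BE_2$, where
\[
B = \frac{b}{h\lambda_1}, \qquad A = a - \frac{b}{\lambda_1}(x+\gamma_1).
\]
This uses that $E_1,E_2$ are $\mathbf{K}_L$-linearly independent, since $E_1\wedge E_2 = \lambda_1 S_0 \neq 0$. Hence $a\s+b\tau\s\in M_\phi$ if and only if $A\in \A_L$ and $B\in V^{-1}\A_L$. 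The task is to translate these two membership conditions into conditions on $a$ and $b$.

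First I will handle $b$. Since $(h) = V+\infty-\dot V-\ddot V$ by Lemma~\ref{lem:motiverelation}(2), the condition $B = b/(h\lambda_1)\in V^{-1}\A_L = \bigcup_k \mathcal{L}(V+k\infty)$ reads $(b) - (h) \geqslant -V - k\infty$. This is equivalent to $(b) \geqslant -\dot V-\ddot V - k'\infty$, that is, $b\in(\dot V+\ddot V)^{-1}\A_L$. Both implications are immediate from the divisor of $h$: the pole of $1/h$ at $V$ is absorbed by the allowed pole, and the zeros of $1/h$ at $\dot V,\ddot V$ allow $b$ simple poles there. Here I treat $\dot V,\ddot V$ as points (in the degenerate case $\dot V=\ddot V$, as a double point), and I note that $\lambda_1\neq 0$ because \eqref{eq:con_C1} and the matrix of the $\tau$-action require it.

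Next I will handle $a$. The condition is $A = a - \frac{b}{\lambda_1}(x+\gamma_1)\in\A_L$. Since $x+\gamma_1\in\A_L$, the function $\frac{b}{\lambda_1}(x+\gamma_1)$ has poles only at $\dot V,\ddot V$ (and $\infty$). Therefore $A\in\A_L$ holds if and only if $a$ is regular away from $\dot V,\ddot V,\infty$ and the principal parts of $a$ and $\frac{b}{\lambda_1}(x+\gamma_1)$ coincide at $\dot V$ and at $\ddot V$. In particular $a\in(\dot V+\ddot V)^{-1}\A_L$ as well. Conversely, if $a,b$ lie in this fractional ideal and the principal parts agree, then $A$ has no finite poles, so $A\in\A_L$, and $B\in V^{-1}\A_L$ by the previous step. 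This gives both directions.

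The main subtlety is the bookkeeping at $V$ and at the points $\dot V,\ddot V$. One must check that $V\notin\{\dot V,\ddot V\}$; otherwise the pole allowed for $B$ at $V$ would interact with the poles of $b$. One must also check that the case $\dot V=\ddot V$ is read with multiplicity, so that "principal part" then means the full order-two polar part. I expect to dispose of both points using $\dot V\dotplus\ddot V=V$ together with the fact that $(h)$ is a reduced divisor in the generic case. I anticipate no other obstacles.
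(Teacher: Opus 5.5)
Your proposal is correct and takes the same route as the paper. The paper obtains the corollary "immediately" from the coordinate-change lemma $A = a - \frac{b}{\lambda_1}(x+\gamma_1)$, $B = \frac{b}{h\lambda_1}$, combined with $M_\phi = \A_L E_1 \oplus V^{-1}\A_L E_2$ and $(h) = V+\infty-\dot V-\ddot V$. You spell out the divisor bookkeeping at $V,\dot V,\ddot V$ that the paper leaves implicit. The one justification to fix is $\lambda_1\neq 0$: it comes from $P_3=-\lambda_1$ together with Lemma~\ref{lem:P3} (see Remark~\ref{rm:lambda}), not from \eqref{eq:con_C1}.
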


\begin{prop}\label{prop:valutation}
Let $\tau^i E_1 = \alpha_i E_1 + \beta_i \tau E_1$ with $\alpha_i,\beta_i \in (\dot{V}+\ddot{V})^{-1} \A_L$.  
Then at the infinite place $\infty$,
\[
\operatorname{ord}_\infty(\alpha_i)= -\left\lfloor \frac{i}{2}\right\rfloor,\qquad
\operatorname{ord}_\infty(\beta_i)= -\left\lfloor \frac{i-1}{2}\right\rfloor \quad (i\geqslant 1).
\]
\end{prop}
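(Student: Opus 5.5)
Throughout we write $\tau^i E_1$ in the coordinates $\s,\tau\s$, i.e. $\tau^i E_1=\alpha_i\s+\beta_i\tau\s$. By Corollary~\ref{cor:inMphi} the coefficients $\alpha_i,\beta_i$ lie in $(\dot V+\ddot V)^{-1}\A_L$. The plan is to derive a two-term recursion for $(\alpha_i,\beta_i)$ from the motive relation and then track valuations at $\infty$ by induction on $i$.

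\textbf{Base cases.} For $i=1$ we have $\alpha_1=0$ and $\beta_1=1$. For $i=2$, Proposition~\ref{prop:Admissible_Basis} gives $\tau^2\s=\lambda_2\tau\s+\gamma_2\s+E_2$ with $E_2=h\lambda_1\tau\s+h(x+\gamma_1)\s$. Hence
\[
\alpha_2=\gamma_2+h(x+\gamma_1),\qquad \beta_2=\lambda_2+\lambda_1 h .
\]
Since $\ord_\infty h=1$ and $\ord_\infty x=-2$, we get $\ord_\infty(h(x+\gamma_1))=-1$. Therefore $\ord_\infty\alpha_2=-1$ and $\ord_\infty\beta_2\ge 0$. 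Equality $\ord_\infty\beta_2=0$ needs $\lambda_2\neq 0$; I will check this separately (see the last paragraph).

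\textbf{Recursion.} Apply $\tau$ to $\tau^i E_1=\alpha_i\s+\beta_i\tau\s$ and substitute the expression for $\tau^2\s$:
\[
\alpha_{i+1}=\alpha_i^{(1)}\cdot 0+\beta_i^{(1)}\alpha_2,\qquad \beta_{i+1}=\alpha_i^{(1)}+\beta_i^{(1)}\beta_2 .
\]
Since $\alpha_1=0$, more precisely $\alpha_{i+1}=\beta_i^{(1)}\alpha_2$ and $\beta_{i+1}=\alpha_i^{(1)}+\beta_i^{(1)}\beta_2$. The Frobenius twist acts only on $L$-coefficients, so it preserves $\ord_\infty$.

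\textbf{Induction.} Assume $\ord_\infty\alpha_i=-\lfloor i/2\rfloor$ and $\ord_\infty\beta_i=-\lfloor (i-1)/2\rfloor$. Then
\[
\ord_\infty\alpha_{i+1}=-\lfloor (i-1)/2\rfloor-1=-\lfloor (i+1)/2\rfloor .
\]
For $\beta_{i+1}$, the two summands have valuations $-\lfloor i/2\rfloor$ and $-\lfloor (i-1)/2\rfloor+\ord_\infty\beta_2$.

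If $i$ is odd, these are $-(i-1)/2$ and $-(i-1)/2+\ord_\infty\beta_2$. If $\ord_\infty\beta_2=0$ the two valuations coincide and cancellation is possible. This is the main obstacle. To handle it, I would track the leading coefficients (signs) in the uniformizer $h$ at $\infty$, using $\Sgn^L$. The sign of $\alpha_2$ is $\Sgn^L(hx)=1$, and the sign of $\beta_2$ is $\lambda_2$. From this one gets a recursion for the leading signs $s_i,t_i$, for instance $s_{i+1}=t_i^q$, and the key point is that no cancellation occurs when the valuations tie. I would try to show that the top coefficient of $\beta_{i+1}$ equals $s_i^q+t_i^q\lambda_2$ and is nonzero, using an explicit closed form for $t_i$ (a twisted continuant in $\lambda_2$). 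If this fails, I would instead argue by degrees: $\tau^i\s$ must have $\A_L$-degree exactly $i$ in the filtration, since $\phi_x$ has $\tau$-degree $4$ and $x$ has pole order $2$.

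If $i$ is even, the first term has valuation $-i/2$, which is strictly smaller than the second provided $\ord_\infty\beta_2\ge 0$. Hence $\ord_\infty\beta_{i+1}=-i/2=-\lfloor i/2\rfloor$ directly, as required.

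The cleanest route may be to avoid the tie issue entirely using the rank-two degree count. The $L$-span $\mathcal F_n$ of $\s,\dots,\tau^n\s$ has dimension $n+1$ and matches the Riemann–Roch dimensions of pole order at $\infty$. This forces the exact valuations: if cancellation occurred, one would have a smaller pole order than the dimension count allows.
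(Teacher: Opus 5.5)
Your skeleton is the paper's: the same $\alpha_2=\gamma_2+h(x+\gamma_1)$ and $\beta_2=\lambda_2+\lambda_1 h$ (the paper's $\tilde A_2,\tilde B_2$), the same recursion $\alpha_{i+1}=\beta_i^{(1)}\alpha_2$, $\beta_{i+1}=\alpha_i^{(1)}+\beta_i^{(1)}\beta_2$, and the same induction. The paper closes the induction simply by writing $v(\lambda_2)=0$ and $b_{i+1}=\min\{a_i,b_i\}$. You correctly single out these two points (the tie for odd $i$, and $\lambda_2\neq 0$), but you leave both open. The promised check of $\lambda_2\neq0$ never appears. Neither of your repairs can work, because the exact equalities are false without an extra hypothesis.

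Already at $i=4$ it fails. The relation $\phi_x\s=x\s$ gives $\beta_4=-g_1-g_2\beta_2-g_3\beta_3$. Since $\ord_\infty\beta_3=-1$ and $\ord_\infty\beta_2\ge 0$, one has $\ord_\infty\beta_4=-1$ if and only if $g_3\neq 0$. From the coefficient list and \eqref{eq:con_C2}, $g_3=-(\lambda_2+\lambda_2^{q^2})$. Up to sign this is exactly your tie coefficient $s_3^q+t_3^q\lambda_2$. In general, for a sign-normalized uniformizer $t$ over $\mathbb F_q$, the coefficient of $t^{-(k-1)}$ in $\beta_{2k}$ is $\sum_{j=0}^{k-1}\lambda_2^{q^{2j}}$, and nothing forces it to be nonzero.

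In particular $\lambda_2=0$ is allowed. By \eqref{eq:con_C2} it means $\chi_1^q+\gamma_1^q+\gamma_1=0$, a polynomial condition of degree $q^2+q$ in $\kappa$. For example, for $y^2+y=x^3+x+1$ over $\mathbb F_2$ one has $\alpha=\theta+1$ and $m=\theta$. This condition is then a separable sextic in $\kappa$, while $\pi$ is a quintic. So some root $\kappa_0$ has $\pi(\kappa_0)\neq 0$, and by Theorem~\ref{thm:Drinfeld} it gives a module with $\lambda_1\neq 0=\lambda_2$. For that module $\ord_\infty\beta_2=1$ and $\ord_\infty\alpha_3=0\neq -1$.

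Your degree/dimension fallback does not help either. A $\tau$-degree count only pins down the coefficient whose parity matches $i$: $\alpha_i$ for even $i$, $\beta_i$ for odd $i$. A drop in the pole order of $\beta_{2k}$ therefore contradicts no dimension count. Moreover, in the paper the description of $\mathcal F_n$ is itself deduced from this proposition, so using it would be circular. Note also that for $i=1$ the formula claims $\ord_\infty\alpha_1=0$, whereas $\alpha_1=0$.

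What your recursion does prove unconditionally, by the same induction, is the following. One has $\ord_\infty\alpha_{2k}=-k$ and $\ord_\infty\beta_{2k+1}=-k$ exactly, since in $\beta_{2k+1}=\alpha_{2k}^{(1)}+\beta_{2k}^{(1)}\beta_2$ there is no tie. One also has $\ord_\infty\alpha_{2k+1}\ge -k$ and $\ord_\infty\beta_{2k}\ge -(k-1)$. These bounds are all that the shtuka-filtration theorem uses. Equality in the last two holds if and only if $\sum_{j=0}^{k-1}\lambda_2^{q^{2j}}\neq 0$, which is true for instance whenever $\lambda_2\notin\overline{\mathbb F}_q$. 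A hypothesis of this kind has to be added to the statement; the paper's proof uses it tacitly.
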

\begin{proof}
Let $v=\operatorname{ord}_\infty$. We have
\[
v(x)=-2,\qquad v(h)=1,\qquad v(\gamma_1)=v(\gamma_2)=v(\lambda_1)=v(\lambda_2)=0,
\]
where $h$ has a simple zero at $\infty$ (Lemma 3.4). Define
\[
\tilde A_2 := \gamma_2 + h(x+\gamma_1),\qquad \tilde B_2 := \lambda_2 + \lambda_1 h.
\]
Then
\[
v(\tilde A_2)=\min\{0,\,1+(-2)\}=-1,\qquad
v(\tilde B_2)=\min\{0,\,1\}=0.
\]
Since $\tau^2 E_1 = \tilde A_2 E_1 + \tilde B_2 \tau E_1$, we get
\[
\alpha_2=\tilde A_2,\quad \beta_2=\tilde B_2,
\]
hence $v(\alpha_2)=-1,\ v(\beta_2)=0$. Also $\alpha_1=0,\ \beta_1=1$, so set $v(\alpha_1)=+\infty,\ v(\beta_1)=0$.

For any $i\ge 1$,
\[
\tau^{i+1}E_1 = \tau(\alpha_i E_1+\beta_i \tau E_1)
= \alpha_i^{(1)}\tau E_1+\beta_i^{(1)}\tau^2 E_1.
\]
Substituting $\tau^2E_1=\tilde A_2E_1+\tilde B_2\tau E_1$ yields
\[
\alpha_{i+1}=\beta_i^{(1)}\tilde A_2,\qquad
\beta_{i+1}=\alpha_i^{(1)}+\beta_i^{(1)}\tilde B_2.
\]
Frobenius twist preserves the order: $v(f^{(1)})=v(f)$. Thus, writing
\begin{equation*}
a_i :=v(\alpha_i),\qquad b_i:=v(\beta_i),
\end{equation*}
we obtain the recurrence
\begin{equation*} 
a_{i+1}=b_i-1,\qquad b_{i+1}=\min\{a_i,b_i\}.  
\end{equation*}
We prove by induction that
\begin{equation}\label{eq:abi}
a_i=-\left\lfloor \frac{i}{2}\right\rfloor,\qquad
b_i=-\left\lfloor \frac{i-1}{2}\right\rfloor.  
\end{equation}
The cases $i=1,2$ are immediate. Assume \eqref{eq:abi} holds for $i$. Then
\[
a_{i+1}= -\left\lfloor \frac{i-1}{2}\right\rfloor -1
= -\left(\left\lfloor \frac{i-1}{2}\right\rfloor+1\right)
= -\left\lfloor \frac{i+1}{2}\right\rfloor,
\]
because $\left\lfloor \frac{i-1}{2}\right\rfloor+1=\left\lfloor \frac{i+1}{2}\right\rfloor$ for all integers $i$. Also,
\[
b_{i+1}=\min\left\{-\left\lfloor \frac{i}{2}\right\rfloor,\, -\left\lfloor \frac{i-1}{2}\right\rfloor\right\}
= -\max\left\{\left\lfloor \frac{i}{2}\right\rfloor,\left\lfloor \frac{i-1}{2}\right\rfloor\right\}
= -\left\lfloor \frac{i}{2}\right\rfloor.
\]
This is exactly $b_{i+1}= -\lfloor i/2\rfloor = -\lfloor ((i+1)-1)/2\rfloor$. Hence \eqref{eq:abi} holds for $i+1$, completing the induction. 
This proves the proposition. 
\end{proof}

 We define 
 \[
\mathcal{F}_{2k} = \left( \mathcal{L}( \dot{V} + \ddot{V} +(k-1) \infty ) \tau \s  + \mathcal{L}(\dot{V} + \ddot{V}  +(k-1) \infty ) \s \right) \cap M_\phi; 
\]
and 
\[
\mathcal{F}_{2k+1} = \left(\mathcal{L}( \dot{V} + \ddot{V}  + (k-1)  \infty )\tau \s  + \mathcal{L}( \dot{V} + \ddot{V} + k  \infty ) \s\right) \cap M_\phi.
\]
It is straightforward to see that 
\[
    \cdots \subseteq \mathcal{F}_{2k-1} \subseteq  \mathcal{F}_{2k} \subseteq \mathcal{F}_{2k+1} \subseteq \mathcal{F}_{2k} \subseteq \cdots
\]
is a filtration of $ M_\phi $.

The following theorem yields that $\{\mathcal{F}_n\}$ is the shtuka data associated with $ \phi $. 
\begin{thm}
    We obtain 
    \[
    \mathcal{F}_{n}   = \langle \s , \tau \s , \cdots, \tau^{n-1} \s \rangle_L
    \] 
    In particular,  
    \[
        \tau  \mathcal{F}_{n}   \subseteq    \mathcal{F}_{n+1} \qquad \mathcal{F}_{n+1}= \mathcal{F}_{n} + \tau \mathcal{F}_{n} . 
    \]
\end{thm}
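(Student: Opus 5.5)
The plan is to compare dimensions. First I show the inclusion $\langle \s,\tau\s,\dots,\tau^{n-1}\s\rangle_L\subseteq\mathcal F_n$. Then I show both sides have the same $L$-dimension $n$, with the right side computed by Riemann--Roch. The two displayed consequences then follow formally: $\tau\langle \s,\dots,\tau^{n-1}\s\rangle_L=\langle\tau\s,\dots,\tau^{n}\s\rangle_L$, and adding $\mathcal F_n$ gives $\mathcal F_{n+1}$.

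\emph{Step 1 (inclusion).} Write $\tau^iE_1=\alpha_iE_1+\beta_i\tau E_1$ as in Proposition~\ref{prop:valutation}, with $\alpha_i,\beta_i\in(\dot V+\ddot V)^{-1}\A_L$. Proposition~\ref{prop:valutation} gives $\operatorname{ord}_\infty(\alpha_i)=-\lfloor i/2\rfloor$ and $\operatorname{ord}_\infty(\beta_i)=-\lfloor (i-1)/2\rfloor$.

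For $i\le n-1$, take $n=2k$ or $n=2k+1$. Then $\lfloor i/2\rfloor\le k-1$ when $n=2k$, and $\lfloor i/2\rfloor\le k$ when $n=2k+1$. In both cases $\lfloor (i-1)/2\rfloor\le k-1$.

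Hence $\alpha_i$ lies in the $\s$-slot space of $\mathcal F_n$ and $\beta_i$ lies in the $\tau\s$-slot space. Since $\tau^i\s\in M_\phi$ automatically, we get $\tau^i\s\in\mathcal F_n$.

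\emph{Step 2 (independence).} The elements $\s,\dots,\tau^{n-1}\s$ are $L$-linearly independent. The quickest route is that $M_\phi=L\{\tau\}\s$ is free over $L\{\tau\}$ on $\s$. Alternatively, the pairs $(\operatorname{ord}_\infty\alpha_i,\operatorname{ord}_\infty\beta_i)$ are distinct and strictly increasing in a lexicographic sense: the total pole order $\lfloor i/2\rfloor+\lfloor(i-1)/2\rfloor=i-1$ grows with $i$, which rules out cancellation.

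\emph{Step 3 (upper bound, the main obstacle).} We need $\dim_L\mathcal F_n\le n$. By Corollary~\ref{cor:inMphi}, $a\s+b\tau\s\in M_\phi$ exactly when $a,b$ have poles bounded by $\dot V+\ddot V$ and the principal parts of $a$ and $\frac{b}{\lambda_1}(x+\gamma_1)$ agree at $\dot V$ and at $\ddot V$.

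Take $n=2k$ with $k\ge1$. The ambient space $\mathcal L(\dot V+\ddot V+(k-1)\infty)^{\oplus 2}$ has dimension $2(k+1)$ by Riemann--Roch on $E$.

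Each $b$ determines the required principal part of $a$ at $\dot V$ and $\ddot V$. I will argue these two conditions are independent, so they cut the dimension by exactly $2$. This leaves $2k=n$.

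Similarly, for $n=2k+1$ the ambient space has dimension $(k+1)+(k+2)$, and the two conditions cut it to $2k+1$.

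Proving independence of the two linear conditions is the delicate point. I plan to handle it directly: the map $a\mapsto(\text{principal parts at }\dot V,\ddot V)$ from $\mathcal L(\dot V+\ddot V+j\infty)$ is surjective onto the $2$-dimensional space of polar parts when $j\ge1$. This holds because the kernel is $\mathcal L(j\infty)$, of dimension $j$, against a source of dimension $j+2$.

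The boundary case $j=0$ (that is, $n=2$, and the $b$-slot of $n=3$) needs separate care. Here $\mathcal L(\dot V+\ddot V)$ surjects only onto a $1$-dimensional subspace of polar parts. I would check this case by hand, using the explicit form $E_2=h(\lambda_1\tau\s+(x+\gamma_1)\s)$ and $h\in\mathcal L(\dot V+\ddot V-V-\infty)$. If the counting fails in low degree, I would instead settle $n\le3$ directly from $\s,\tau\s$ and the expression for $\tau^2\s$ in Proposition~\ref{prop:Admissible_Basis}.

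Combining Steps 1--3, we get equality of dimensions and hence $\mathcal F_n=\langle\s,\dots,\tau^{n-1}\s\rangle_L$.
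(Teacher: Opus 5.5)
Your route is the paper's: the inclusion comes from Proposition~\ref{prop:valutation} and Corollary~\ref{cor:inMphi}, and $\dim_L\mathcal F_n=(n+2)-2$ comes from Riemann--Roch and the two residue conditions. The paper states this count without checking that its residue map $\varphi$ has rank $2$. You rightly isolate that rank as the real issue, and you settle it whenever the $\s$-slot is $\mathcal L(\dot V+\ddot V+j\infty)$ with $j\ge1$, i.e.\ for all $n\ge3$ (for $n=3$ the $\tau\s$-slot with $j=0$ is irrelevant).

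The gap is $n\le2$. Besides $n=2$, the case $n=1$ also has $j=0$ in the $\s$-slot; its slots are $\mathcal L(\dot V+\ddot V)$ and $\mathcal L(\dot V+\ddot V-\infty)=Lh$. Your fallback of arguing ``directly from $\s,\tau\s$ and $\tau^2\s$'' cannot close it. Exhibiting elements of $\mathcal F_n$ only bounds its dimension from below, so you still need the rank of $\varphi$.

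The missing idea is that the second independent condition must come from the $\tau\s$-slot, through the factor $x+\gamma_1$.
\begin{itemize}
\item Measure residues against the invariant differential $\omega$ of $E$, which has neither zeros nor poles.
\item For $a\in\mathcal L(\dot V+\ddot V)$, the residue theorem gives $\Res_{\dot V}(a\omega)=-\Res_{\ddot V}(a\omega)$. So the $\s$-slot only reaches the line through $(1,-1)$.
\item Take $b=h$, which lies in the $\tau\s$-slot in both cases, and set $\rho=\Res_{\dot V}(h\omega)\neq0$. Then $\Res_{\ddot V}(h\omega)=-\rho$, and $\frac{h}{\lambda_1}(x+\gamma_1)$ has residue vector $\frac{\rho}{\lambda_1}\bigl(\dot\alpha+\gamma_1,\,-(\ddot\alpha+\gamma_1)\bigr)$.
\item Its determinant with $(1,-1)$ is $\frac{\rho}{\lambda_1}(\dot\alpha-\ddot\alpha)$.
\item We have $\dot\alpha\neq\ddot\alpha$, because $\dot V\neq\ddot V$ while $\dot V\dotplus\ddot V=V\neq\infty$ excludes $\ddot V=\dot V^\vee$.
\end{itemize}
Hence $\varphi$ is surjective for every $n\ge1$, and your dimension count goes through.

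The degenerate case $\dot V=\ddot V=:T$, which the paper also only calls ``similar,'' works by the same mechanism. The $j=0$ slot gives only polar parts at $T$ with zero $\omega$-residue. By contrast, $h(x+\gamma_1)$ has $\omega$-residue $\Res_T\bigl(h(x-\dot\alpha)\omega\bigr)\neq0$. This is because $x-\dot\alpha$ has a simple zero at $T$, since $2T=V\neq\infty$ forces $T\neq T^\vee$.
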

\begin{proof}
Here we only consider the general case $ \dot{V} \neq \ddot{V}$, the special case $\dot{V} = \ddot{V}$ follows by the similar argument.
From Corollary~\ref{cor:inMphi} and Proposition~\ref{prop:valutation}, we see that each $\tau^{i} \s$ with $ i < n $ is contained in $  \mathcal{F}_{n} $.
Corollary~\ref{cor:inMphi}, we see $\mathcal{F}_{n}$ is the kernel of $ \phi $:  
\[  
 \mathcal{L}( \dot{V} + \ddot{V} + \left\lfloor \frac{n-1}{2}\right\rfloor \infty ) \s \oplus \mathcal{L}( \dot{V} + \ddot{V}  + \left\lfloor \frac{n-2}{2}\right\rfloor  \infty )\tau \s  \to L\oplus L  ;
\]
where
\[
    \varphi(a \s + b \tau \s ) =  \left( \Res_{\dot{V}} ( a - \frac{b (x+\gamma_1) }{   \lambda_1 }) , \Res_{\ddot{V}} ( a - \frac{b (x+\gamma_1) }{   \lambda_1 })  \right) .
\]

From the Riemann-Roch formula and Corollary~\ref{cor:inMphi}, we get
\[
\dim \mathcal{F}_{n} = n+2-2 = n   . 
\] 
So the elements $\tau^{i} \s$ with $ i < n $ form a basis for $\mathcal{F}_{n}$.
The rest statements are obvious.
\end{proof}

\section{Moduli Space and Drinfeld module}
\subsection{Complete Family}
According to the right-hand side of \eqref{eq:con_C1}, we set 
\[ 
 \pi(X) : =(\uk\gamma_1 - \vk)  \mid_{\kappa = X } .
\]  
 Substituting expressions of $ \gamma_1 $, $\uk = \kappa^q - \kappa $ and $ \vk $ (Lemma \ref{lem:chi2qg2g1}), we find that 
\begin{align*}
  \pi(X) &= - X^{2q+1} + m X^{2q}  + X^{q+2} -(2m+a_1)X^{q+1}  + \big(\alpha + 2\alpha^q\big)X^q \\
&\quad + (m+a_1)X^2 + \big(-2\alpha - \alpha^q\big)X   + \big(\beta^\vee - (\beta^\vee)^q\big).
\end{align*}

\begin{thm}\label{thm:Drinfeld}
Fix the point $V$ with $x$-coordinate $\alpha$ as constructed for rank-one $\A $-Drinfeld modules.
Let $\pi(X)$ denote the polynomial defined previously. 
Then the complete family of sign-normalized rank-two $\A$-Drinfeld modules over $ L $ is parameterized by $(\kappa,\lambda_1)$ satisfying the supersingular moduli curve relation
\[
\lambda_1^{q+1} = \pi(\kappa) \qquad \lambda_1 \neq 0. 
\]
Each such Drinfeld module is given by the twisted polynomial expressions \eqref{eq:phix2} and \eqref{eq:phiy2}, where all coefficients $g_1,g_2,g_3,h_1,\dots,h_5$ are explicitly computed via the auxiliary formulas collected in List \ref{form:coeffs}.
\end{thm}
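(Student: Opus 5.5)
The proof has two directions. First, every sign-normalized rank-two Drinfeld module produces a pair $(\kappa,\lambda_1)$ on the curve. Second, every such pair produces a genuine Drinfeld module, with the two constructions mutually inverse.

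\emph{Necessity.} Start from $\phi$ as in \eqref{eq:phix2}, \eqref{eq:phiy2}. By Lemma~\ref{lem:P3} we have $P_3\neq 0$, so Proposition~\ref{prop:motive2} gives the motive relation $C_2\tau^2\s=C_1\tau\s+C_0\s$. Lemma~\ref{lem:motiverelation} together with Corollary~\ref{cor:C0C1C2} then determines the auxiliary data:
\begin{itemize}
\item the function $h$;
\item the slope $\kappa$ of the line $\mathfrak L$;
\item the constants $\lambda_1,\lambda_2$, with $C_1=\lambda_2C_2+\lambda_1C_1'$.
\end{itemize}
The condition $\lambda_1\neq 0$ is forced. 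Otherwise $C_1=\lambda_2C_2$, so $\s\wedge\tau^2\s$ would contribute nothing beyond $\A_L\cdot\frac1h S_0$. Then the three generators $\frac1h,\frac{C_1}{C_2h},\frac{C_0}{C_2h}$ could not generate $V^{-1}\A_L$, contradicting part~(2) of Lemma~\ref{lem:motiverelation}. Equivalently, $E_2$ would fail to be a basis vector and $\det\tau$ would not be $-f_\alpha$ up to a unit.

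Since $M_\phi$ is an honest motive, $\tau$ preserves it. Lemma~\ref{lem:stable} and Theorem~\ref{thm:equivalence} therefore give \eqref{eq:con_C1}, i.e.\ $\lambda_1^{q+1}=\uk\gamma_1-\vk=\pi(\kappa)$, and \eqref{eq:con_C2} fixes $\lambda_2$ in terms of $\kappa$ and $\lambda_1$. The pair $(\kappa,\lambda_1)$ is intrinsic: $\kappa$ comes from the divisor of $h$, and $\lambda_1$ comes from normalizing $\Sgn(h)=1$. The only remaining freedom is the $\mathbb F_q^\times$ ambiguity in $S_0$, which is absorbed by the sign normalization.

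\emph{Sufficiency.} Given $(\kappa,\lambda_1)$ with $\lambda_1^{q+1}=\pi(\kappa)\neq0$, define $\gamma_1,\gamma_2$ from $\kappa$ and $\lambda_2$ from \eqref{eq:con_C2}. Set
\[
M=\A_LE_1\oplus V^{-1}\A_LE_2
\]
with the $\tau$-action given by the explicit matrix in the Corollary preceding this subsection. By Theorem~\ref{thm:equivalence}, $\tau$ is stable on $M$. The determinant computation preceding that Corollary shows that $\det\tau$ is $-\lambda_1^{q-1}f_\alpha$ on $E_1\wedge E_2$. Hence $M/\tau M$ has $L$-dimension $\deg(\xi)=1$ and is supported at $\xi$, while $M$ is projective of rank two over $\A_L$. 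This is exactly a rank-two Anderson motive, hence it corresponds to a Drinfeld module $\phi$.

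To recover the coefficients of $\phi$ explicitly:
\begin{itemize}
\item Use the filtration theorem, $\mathcal F_n=\langle \s,\dots,\tau^{n-1}\s\rangle_L$, with $\s=E_1$.
\item Expand $x\s\in\mathcal F_5$ and $y\s\in\mathcal F_7$ in the basis $\tau^i\s$, using Proposition~\ref{prop:valutation}.
\item The coefficients of these expansions are the $g_i$ and $h_j$, as in List~\ref{form:coeffs}.
\item The leading coefficients equal $1$ because of $\Sgn(h)=1$ and the normalization of $C_2$ and $C_0$.
\end{itemize}

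\textbf{Main obstacle.} The hardest step is sufficiency: checking that the abstract motive built from $(\kappa,\lambda_1)$ is generated over $L\{\tau\}$ by $E_1$, and that it returns the same $(\kappa,\lambda_1)$. I would first try to prove $\tau^2E_1\in\A_LE_1+\A_L\tau E_1$ modulo $E_2$ via Corollary~\ref{cor:E2tau}. I would then show that the filtration dimensions $\dim\mathcal F_n=n$ hold by the same Riemann--Roch count used in the shtuka theorem. This yields $M=L\{\tau\}E_1$ and the injectivity of the parametrization.
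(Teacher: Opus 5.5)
\textbf{Verdict: there is a genuine gap.} The explicit formulas are asserted rather than derived, and your argument for $\lambda_1\neq0$ does not work.

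Your necessity argument follows the paper's chain: Lemma~\ref{lem:P3}, Proposition~\ref{prop:motive2}, Lemma~\ref{lem:motiverelation}, Corollary~\ref{cor:C0C1C2}, then \eqref{eq:con_C1} and \eqref{eq:con_C2} from Lemma~\ref{lem:stable} and Theorem~\ref{thm:equivalence}. The problem is your reason for $\lambda_1\neq0$. Granting the shapes in Corollary~\ref{cor:C0C1C2}, if $\lambda_1=0$ then $\frac{C_1}{C_2h}=\frac{\lambda_2}{h}$ and $\frac{C_0}{C_2h}=(x+\gamma_1)+\frac{\gamma_2}{h}$. So the wedge ideal is $\bigl(\frac1h,\,x+\gamma_1\bigr)_{\A_L}$. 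Since $\frac1h$ has finite divisor $\dot V+\ddot V-V$, this ideal equals $V^{-1}\A_L$ whenever $-\gamma_1\notin\{\dot\alpha,\ddot\alpha\}$. Hence no contradiction with Lemma~\ref{lem:motiverelation}(2) arises. Likewise, $E_1\wedge E_2=\lambda_1S_0=0$ only shows that the admissible basis degenerates, not that $\phi$ cannot exist. The paper's reason (Remark~\ref{rm:lambda}) is immediate. The $y$-coefficient of $C_1$ is $-P_3$ in \eqref{eq:C1formula} and $\lambda_1$ in Corollary~\ref{cor:C0C1C2}, so $\lambda_1=-P_3\neq0$ by Lemma~\ref{lem:P3}, which you already cite. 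You need this before using $E_1,E_2$ at all, since the $\tau$-matrix and Lemma~\ref{lem:stable} divide by $\lambda_1$.

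The more serious gap is the explicit formulas, which are the actual content of the paper's proof. The paper matches the Euclidean-division expressions \eqref{eq:C2formula}--\eqref{eq:C0formula}, whose coefficients are in the $g_i,h_j$ via $P_i,L_i,R_i$, against the geometric ones.
\begin{enumerate}
\item The $x^2$- and $y$-coefficients give $\lambda_2=g_3^{q^2}-h_5$, $\lambda_1=-P_3$, $\gamma_1=-(P_3^qg_3-P_2^q)-\theta^{q^3}$ and $P_0=\kappa^q+m-\kappa$.
\item The remaining coefficients give Lemma~\ref{lem:PLR}.
\item The resulting triangular system is inverted through the definitions of $P_3^qg_3-P_2^q$, $R_1$, $R_2$ (giving $g_3,g_1,g_2$) and of $P_0,\dots,P_3$, $\lambda_2$ (giving the $h_j$).
\end{enumerate}
Your alternative is legitimate in principle. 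You would write $\tau^i\s=\alpha_i\s+\beta_i\tau\s$ via $\alpha_{i+1}=\beta_i^{(1)}\tilde A_2$, $\beta_{i+1}=\alpha_i^{(1)}+\beta_i^{(1)}\tilde B_2$. You would then solve identities such as $x-\theta=g_2\alpha_2+g_3\alpha_3+\alpha_4$ and $0=g_1+g_2\beta_2+g_3\beta_3+\beta_4$. But these identities involve $h,h^{(1)},h^{(2)},h^{(3)}$, and you give no indication of how they reduce to the stated closed forms. As written, the sentence claiming that the expansion coefficients are those of the list carries the whole theorem.

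Your sufficiency direction goes beyond the paper, whose Appendix~B only computes the coefficients of a given $\phi$ and never checks the converse. Your determinant step is sound. It holds identically, because $(x+\gamma_1)T_2+T_1/h=\lambda_1^{q+1}h^{(1)}\bigl((x+\gamma_1)+\gamma_2/h\bigr)=\lambda_1^{q+1}f_\alpha$, so $\dim_L M/\tau M=1$. However, the points you flag yourself are left unproved: that $M$ is free of rank one over $L\{\tau\}$ on $E_1$, and that the resulting module returns the same $(\kappa,\lambda_1)$. That part therefore remains a plan.
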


\noindent\textbf{List \ref{form:coeffs}}. Explicit auxiliary and module coefficients.
\begin{enumerate}\label{form:coeffs}
\item $\displaystyle \gamma_1 = (\kappa^q + m + a_1) (m - \kappa) - \theta$
\item $\displaystyle \gamma_2 = - \kappa^q - m - a_1$
\item $\displaystyle \chi_1 = \kappa^2 + a_1 \kappa - a_2 -\alpha$
\item $\displaystyle \chi_2 = a_4 - 2 \kappa \beta^{\vee} + \kappa^2 \alpha - a_1 \beta^{\vee} - a_3 \kappa + a_2 \alpha + \alpha^2$
\item $\displaystyle \lambda_2 = \frac{1}{\lambda_1^{q}} \left( - \kappa^{2q} - a_1 \kappa^q + a_2 + \alpha^q - \gamma_1^{q} - \gamma_1 \right)$
\item $\displaystyle g_1 = \frac{\lambda_2}{\lambda_1^{q+1}} \big( - \chi_2 - \gamma_1 (\gamma_1 + \chi_1) \big) + \frac{1}{\lambda_1^{q}} \big(\beta + (\kappa + a_1) (\alpha + \gamma_1) - \eta^q + (\kappa^{q^2} + m^q - \kappa^q) (\theta^{q} + \gamma_1 ) \big)$
\item $\displaystyle g_2 = \big(\kappa^{q^2} + m^q - \kappa^q + \kappa + a_1 \big)^q + \frac{\lambda_2^q }{\lambda_1^{q} } \big(-\gamma_1^q - \chi_1^q - \gamma_1 \big) + \frac{1}{\lambda_1^{q+1} } \big( \chi_2 + \gamma_1 (\gamma_1 + \chi_1 ) \big)$
\item $\displaystyle g_3 = - \lambda_2^{q^2} + \frac{1}{\lambda_1^q } \big(\gamma_1^q + \gamma_1 + \chi_1^q\big)$
\item $\displaystyle h_1 = (\kappa^q + m - \kappa ) g_1 - \lambda_1 (\kappa^{q^2} + m^q - \kappa^q + \kappa + a_1 ) - \lambda_2 (\theta^q -\gamma_1 - \chi_1)$
\item $\displaystyle h_2 = \lambda_1 \lambda_2^q - \lambda_2 g_1^q + (\kappa^q + m - \kappa ) g_2 -\gamma_1 - \chi_1 + \theta^{q^2}$
\item $\displaystyle h_3 = -\lambda_1 + g_1^{q^2} - \lambda_2 g_2^q + (\kappa^q + m - \kappa ) g_3$
\item $\displaystyle h_4 = (\kappa^q + m - \kappa ) + g_2^{q^2} - \lambda_2 g_3^q$
\item $\displaystyle h_5 = g_3^{q^2} - \lambda_2$
\end{enumerate}

Since the proof is a detailed computation, we postpone it the Appendix. 

\begin{remark}\label{rm:lambda}
Notice that the crucial coefficient $ P_3 $ in Proposition~\ref{prop:motive2}, is identical to $ - \lambda_1 $ (see Lemma \ref{lem:PLR}). According to Lemma \ref{lem:P3}, we must have $ \lambda_1 \neq 0 $.  
\end{remark}

\subsection{The $j$-invariant}
  
For a rank-two normalized Drinfeld $\mathbf{A}$-module, we have given the explicit formulas
that the normalized Drinfeld $\mathbf{A}$-module is decided by 
\[
    C_2 \tau^2 \s = C_1 \tau \s  + C_0 \s .
\]
We derive the $ j $-invarant using this relation.

\begin{prop}\label{prop:j}
Let \(\phi\) and \(\tilde{\phi}\) be sign-normalised rank-two Drinfeld \(\mathbf A\)-modules constructed as in Theorem 4.1, both attached to the same place \(V\), with parameters \((\lambda_1,\kappa)\) and \((\tilde{\lambda}_1,\tilde{\kappa})\), respectively. Then \(\phi\) and \(\tilde{\phi}\) are isomorphic over \(L\) if and only if \(\kappa=\tilde{\kappa}\). In other words, the \(j\)-invariant of such a module is given by \(\kappa\).
   \end{prop}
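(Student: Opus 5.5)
An isomorphism between two sign-normalised rank-two Drinfeld $\mathbf A$-modules over $L$ is a constant $c\in L^\times$ with $c\,\phi_a=\tilde\phi_a\,c$ for all $a\in\mathbf A$. Comparing leading coefficients of $\phi_x$ and $\phi_y$ gives $c^{q^4}=c$ and $c^{q^6}=c$, hence $c^{q^2}=c$, i.e.\ $c\in\mathbb F_{q^2}^\times$. The plan is to translate such a $c$ into the motive. The map $\mathbf s_{\tilde\phi}\mapsto c\,\s$ (more precisely $m\mapsto m\cdot c$ on $L\{\tau\}$) is an isomorphism $M_{\tilde\phi}\cong M_\phi$ of $L\{\tau\}\otimes\mathbf A$-modules. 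Applying it to the motive relation $\tilde C_2\tau^2\mathbf s_{\tilde\phi}=\tilde C_1\tau\mathbf s_{\tilde\phi}+\tilde C_0\mathbf s_{\tilde\phi}$ yields
\[
\tilde C_2\,c^{q^2}\tau^2\s=\tilde C_1\,c^{q}\tau\s+\tilde C_0\,c\,\s .
\]
Since the relation in Proposition~\ref{prop:motive2} is unique once we impose $\Sgn^L(C_2)=1$ (the kernel of $\A_L^3\to M_\phi$ is rank one in the relevant Riemann--Roch window), we obtain $\tilde C_2=C_2$, $\tilde C_1=c^{1-q}C_1$ and $\tilde C_0=c^{1-q^2}C_0=C_0$.

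\textbf{Necessity.} From $\tilde C_2=C_2$ and Corollary~\ref{cor:C0C1C2}(1) we get $\chi_1(\tilde\kappa)=\chi_1(\kappa)$ and $\chi_2(\tilde\kappa)=\chi_2(\kappa)$. Rather than solving these equations, I will argue geometrically: $C_2$ determines the unordered pair $\{\dot V,\dot V^\vee\},\{\ddot V,\ddot V^\vee\}$ through its zero divisor (Lemma~\ref{lem:motiverelation}(4)). The relation $\tilde C_1'=h\,C_2$ (via $\tilde C_1=\lambda_2C_2+\lambda_1C_1'$ with $c^{1-q}$ scaling) pins down which choice is the pole set of $h$, because $\dot V\dotplus\ddot V=V$ while the other sign choices sum to $V^\vee$ or to a point differing from $V$. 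Hence the line $\mathfrak L$ through $V^\vee,\dot V,\ddot V$ is the same for both modules, and so are its slopes: $\kappa=\tilde\kappa$. Alternatively, and more directly, one can compare the $x$-coefficient: $\chi_1$ involves $\kappa^2+a_1\kappa$, and $\chi_2$ is linear in $\kappa$ with coefficient $-2\beta^\vee+\ldots$ once $\kappa^2$ is eliminated using $\chi_1$. This gives $\kappa=\tilde\kappa$ except in degenerate cases, which are handled by the geometric argument.

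\textbf{Sufficiency.} Suppose $\kappa=\tilde\kappa$. Then $\lambda_1^{q+1}=\tilde\lambda_1^{q+1}=\pi(\kappa)$ by Theorem~\ref{thm:Drinfeld}, so $\tilde\lambda_1=\zeta\lambda_1$ with $\zeta^{q+1}=1$. Choose $c\in\mathbb F_{q^2}^\times$ with $c^{1-q}=\zeta$; this is possible because $c\mapsto c^{1-q}$ maps $\mathbb F_{q^2}^\times$ onto the $(q+1)$-th roots of unity. By \eqref{eq:con_C2}, $\lambda_1^q\lambda_2$ depends only on $\kappa$, so $\tilde\lambda_2=\zeta^{-q}\lambda_2=\zeta\lambda_2$, using $\zeta^{-q}=\zeta$. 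The quantities $\gamma_i,\chi_i$ depend only on $\kappa$. Therefore $\tilde C_1=\zeta C_1$ and $\tilde C_0=C_0$, $\tilde C_2=C_2$, which is exactly the motive relation of $c^{-1}\phi c$. Since the relation determines the Drinfeld module (Theorem~\ref{thm:Drinfeld} and List~\ref{form:coeffs}; equivalently, $\phi_x,\phi_y$ are recovered from $M_\phi$), we conclude $\tilde\phi=c^{-1}\phi c$. One can also check this directly on List~\ref{form:coeffs}: the coefficients scale as $g_i\mapsto c^{q^i-1}g_i$.

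\textbf{Main obstacle.} The delicate point is the uniqueness and normalisation of the motive relation, together with the claim that the relation determines $\phi$, so that the scalings transfer both ways. I would first establish this from the dimension count in the proof of Proposition~\ref{prop:motive2}.
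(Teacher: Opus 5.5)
Your proposal is correct. The sufficiency half is the paper's argument made more careful, but your necessity half takes a genuinely different route.

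\textbf{Necessity.} The paper extracts from $\tilde C_2=C_2$ only $\chi_1=\tilde\chi_1$. This leaves the spurious root $\tilde\kappa=-\kappa-a_1$, which the paper eliminates by comparing the $y$-, $x^2$- and $x$-coefficients of $\tilde C_1=\ell^{q-1}C_1$. That comparison forces $\lambda_1(\kappa-\tilde\kappa)=0$, so it needs $\lambda_1=-P_3\neq 0$ (Lemma~\ref{lem:P3}). You instead show that $C_2$ alone already determines $\kappa$. Geometrically, its zero divisor together with $\dot V\dotplus\ddot V=V$ pins down the set $\{\dot V,\ddot V\}$, hence $\mathfrak L$ and its slope. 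Algebraically, you also use $\chi_2$. Your route is more conceptual: it shows why the slope of $\mathfrak L$ is an isomorphism invariant without touching $\lambda_1,\lambda_2$ or the scaling of $C_1$. The paper's route is a short coefficient chase that reuses scalings it needs for the converse anyway.

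\textbf{Small fixes.} You can drop the hedge about ``degenerate cases''. The combination $\chi_2-\alpha\chi_1$ is affine in $\kappa$ with slope $-(2\beta^\vee+a_1\alpha+a_3)=\beta-\beta^\vee$. So $\tilde C_2=C_2$ gives $(\tilde\kappa-\kappa)(\beta-\beta^\vee)=0$ directly. Moreover $\beta\neq\beta^\vee$, because $V$ is not $2$-torsion; otherwise $\xi=V\dotplus(V^{(1)})^\vee$ would be an $\overline{\mathbb F}_q$-point. In the geometric version, another sign choice can also sum to $V$ only when $\dot V$ or $\ddot V$ is $2$-torsion, and then it yields the same set.

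\textbf{Converse.} With $c^{1-q}=\zeta$, your own necessity computation shows that the module with relation $(C_2,\zeta C_1,C_0)$ is $c\,\phi\,c^{-1}$, so $g_i\mapsto c^{1-q^i}g_i$. It is not $c^{-1}\phi c$, though this is only a convention slip. Your checks that $\tilde\lambda_2=\zeta\lambda_2$ via \eqref{eq:con_C2}, and that $(\kappa,\lambda_1)$ determines $\phi$ through the coefficient list of Theorem~\ref{thm:Drinfeld}, supply what the paper compresses into ``this $\ell$ gives the required isomorphism''. The paper's $\ell^{q+1}$ there should be $\ell^{q-1}$, as your surjectivity of $c\mapsto c^{1-q}$ onto the $(q+1)$-th roots of unity makes clear.
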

\begin{proof} 
Assume that \(\phi\cong \tilde{\phi}\). Since both modules are sign-normalised, there exists a constant \(\ell\in L^\times\) such that the motive generators satisfy  
\[
\tilde{\mathbf s}_\phi = \ell\,\mathbf s_\phi,
\]
and consequently the coefficients of the motive relation transform as
\[
\ell^{q^2-1}C_2 = \widetilde C_2,\qquad 
\ell^{\,q-1}C_1 = \widetilde C_1,\qquad 
\ell^{-1}C_0 = \widetilde C_0
\]
(up to the harmless choice of the direction of \(\ell\); the first two relations are the only ones needed). This follows immediately from the identity
\[
C_2\tau^2\mathbf s_\phi = C_1\tau\mathbf s_\phi + C_0\mathbf s_\phi
\]
and the fact that \(\{\mathbf s_\phi,\tau\mathbf s_\phi,\tau^2\mathbf s_\phi\}\) are linearly independent over \(L\).

From the explicit form of \(C_2\),
\[
C_2=x^2-\chi_1 x+\chi_2,\qquad 
\chi_1=\kappa^2+a_1\kappa-a_2-\alpha,
\]
and similarly for \(\widetilde C_2\), we compare the leading coefficient (of \(x^2\)). Both modules are sign-normalised, so \(\operatorname{Sgn}(C_2)=\operatorname{Sgn}(\widetilde C_2)=1\). Hence
\[
\ell^{q^2-1}=1,
\]
which implies \(\ell\in \mathbb F_{q^2}^\times\). In particular, \(\ell^{q^2-1}=1\), so the comparison of the coefficient of \(x\) in \(C_2\) gives
\begin{equation}\label{eq:chi_1}
-\chi_1 = -\tilde{\chi}_1,
\end{equation}
i.e.
\[
\kappa^2+a_1\kappa = \tilde{\kappa}^2+a_1\tilde{\kappa}.
\]
Thus
\begin{equation}\label{eq:tildekappa}
(\kappa-\tilde{\kappa})(\kappa+\tilde{\kappa}+a_1)=0.  
\end{equation}

Now suppose, for contradiction, that \(\kappa\neq \tilde{\kappa}\). Then \eqref{eq:tildekappa} forces
\begin{equation*} 
\tilde{\kappa}=-\kappa-a_1.  
\end{equation*}

Next, recall the expression for \(C_1\) (see Corollary~\ref{cor:C0C1C2}):
\[
C_1=\lambda_2 x^2+\lambda_1 y+
\bigl(\lambda_1(\kappa+a_1)-\lambda_2\chi_1\bigr)x
+\bigl(\lambda_2\chi_2-\lambda_1(\beta+(\kappa+a_1)\alpha)\bigr),
\]
and the analogous formula for \(\widetilde C_1\) with tilded parameters. Comparing the coefficients of \(x^2\) and \(y\) in the relation
\[
\ell^{q-1}C_1=\widetilde C_1,
\]
we obtain 
\begin{equation}\label{eq:ell}
\ell^{q-1}\lambda_2=\tilde{\lambda}_2,\qquad 
\ell^{q-1}\lambda_1=\tilde{\lambda}_1.  
\end{equation}

Comparing the coefficients of \(x\) gives
\[
\ell^{q-1}\bigl(\lambda_1(\kappa+a_1)-\lambda_2\chi_1\bigr)
=
\tilde{\lambda}_1(\tilde{\kappa}+a_1)-\tilde{\lambda}_2\tilde{\chi}_1.
\]
Using \(\chi_1=\tilde{\chi}_1\) in Equation~\eqref{eq:chi_1} together with \eqref{eq:ell}, this reduces to
\[
\lambda_1(\kappa+a_1)=\lambda_1(\tilde{\kappa}+a_1),
\]
hence
\[
\lambda_1(\kappa-\tilde{\kappa})=0.
\]
By Lemma \ref{lem:P3} (or Remark \ref{rm:lambda}), \(\lambda_1\neq 0\). Therefore \(\kappa=\tilde{\kappa}\), which contradicts the assumption \(\kappa\neq \tilde{\kappa}\).

Consequently, \(\kappa=\tilde{\kappa}\). The converse is immediate: if \(\kappa=\tilde{\kappa}\), then the parameters \((\lambda_1,\tilde{\lambda}_1)\) are related by the moduli equation \(\lambda_1^{q+1}=\pi(\kappa)=\tilde{\lambda}_1^{q+1}\), so \(\tilde{\lambda}_1=\ell^{q+1}\lambda_1\) for some \(\ell\in\mathbb F_{q^2}^\times\); this \(\ell\) gives the required isomorphism. 

\end{proof}

Combining Theorem~\ref{thm:Drinfeld} and Proposition~\ref{prop:j}, we find the moduli space for Drinfeld $\A$-modules.
\begin{thm}
    (1) The moduli space for the sign-normalized $\A$-Drinfeld module is the open domain $ \{ C_{\pi}-\infty\}  \cap \{ Y \neq 0 \} $.

    (2) The moduli space for the $\A$-Drinfeld modules is the open domain of the $ X $-line, the projection image of  $ C_{\pi} \cap \{ Y \neq 0 \} $.
\end{thm}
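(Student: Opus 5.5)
The proof combines Theorem~\ref{thm:Drinfeld}, which parametrizes sign-normalized modules, with Proposition~\ref{prop:j}, which identifies isomorphism classes. I write $C_\pi$ for the projective closure of the affine curve $Y^{q+1}=\pi(X)$. Since $\deg\pi=2q+1$ is coprime to $q+1$, this closure has a single point at infinity.

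For part (1), I first build the map $\phi\mapsto(\kappa,\lambda_1)$. Given a sign-normalized rank-two module $\phi$, Proposition~\ref{prop:motive2} and Lemma~\ref{lem:P3} give the motive relation. Lemma~\ref{lem:motiverelation} and Corollary~\ref{cor:C0C1C2} then produce the slope $\kappa$ of $\mathfrak L$ and the constants $\lambda_1,\lambda_2$.

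Next I check that this map is well defined. The normalization $\Sgn^L(C_2)=1$ and $\Sgn(h)=1$ fixes the scaling of $C_2$ and of $h$. Hence $\kappa$ and $\lambda_1$ are uniquely determined by the coefficients $g_i,h_j$, not merely up to a scalar.

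Theorem~\ref{thm:equivalence} shows that stability of the $\tau$-action is equivalent to \eqref{eq:con_C1} and \eqref{eq:con_C2}. Condition \eqref{eq:con_C1} says exactly that $(X,Y)=(\kappa,\lambda_1)$ is an affine point of $C_\pi$. Remark~\ref{rm:lambda} gives $\lambda_1\neq0$.

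Conversely, Theorem~\ref{thm:Drinfeld} and List~\ref{form:coeffs} attach an explicit module to each affine point with $Y\neq0$. Once \eqref{eq:con_C1} holds, \eqref{eq:con_C2} simply defines $\lambda_2$. The remaining task is to check that the two constructions are mutually inverse. I would do this by observing that the coefficients $g_i,h_j$ are recovered from $C_0,C_1,C_2$ through the Euclidean-division identities of Proposition~\ref{prop:motive2}. Those $C_i$ are in turn determined by $(\kappa,\lambda_1,\lambda_2)$ via Corollary~\ref{cor:C0C1C2}.

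Smoothness of $C_\pi$ is what makes this an open domain of a smooth curve. I would check it by showing that $\pi$ is separable. The leading term is $-X^{2q+1}$, and
\[
\pi'(X)=-X^{2q}+2X^{q+1}-(2m+a_1)X^{q}+2(m+a_1)X-2\alpha-\alpha^q .
\]
A common root of $\pi$ and $\pi'$ would impose an algebraic relation on $\theta$. That is impossible because $\theta$ is transcendental over $\mathbb F_q$.

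For part (2), Proposition~\ref{prop:j} shows that two sign-normalized modules are isomorphic over $L$ exactly when they have the same $\kappa$. Any Drinfeld module is isomorphic over the algebraically closed field $L$ to a sign-normalized one, obtained by conjugating with a scalar that makes the leading coefficients equal to $1$. So isomorphism classes correspond to fibres of the projection $(X,Y)\mapsto X$ restricted to $C_\pi\cap\{Y\neq0\}$.

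I then check that each fibre is a single orbit. The fibre over a fixed $\kappa$ consists of the $q+1$ roots $\lambda_1$ of $\lambda_1^{q+1}=\pi(\kappa)$. These are permuted transitively by $\lambda_1\mapsto\zeta\lambda_1$ with $\zeta^{q+1}=1$, that is, with $\zeta=\ell^{q-1}$ for some $\ell\in\mathbb F_{q^2}^\times$. This matches the transformation rule \eqref{eq:ell} in the proof of Proposition~\ref{prop:j}. Hence the moduli space is the image $\{X:\pi(X)\neq0\}$, an open subset of the $X$-line.

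The main obstacle is the bijectivity in part (1), specifically the claim that the normalized $(\kappa,\lambda_1)$ uniquely determine $\phi$ and conversely. The converse direction relies on the appendix computation behind List~\ref{form:coeffs}. I would first verify it through the relations between $P_i$ and $(\lambda_1,\lambda_2)$, using Lemma~\ref{lem:PLR} for $P_3=-\lambda_1$, rather than by full expansion.
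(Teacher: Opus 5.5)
Your argument for both parts is the paper's. The paper proves this theorem in one line by combining Theorem~\ref{thm:Drinfeld} (sign-normalized modules $\leftrightarrow$ points $(\kappa,\lambda_1)$ of $C_\pi$ with $\lambda_1\neq0$) with Proposition~\ref{prop:j} (isomorphism classes $\leftrightarrow$ values of $\kappa$). Your description of the fibres via $\lambda_1\mapsto\ell^{q-1}\lambda_1$, $\ell\in\mathbb{F}_{q^2}^\times$, is the one consistent with \eqref{eq:ell}. The converse step in the paper's proof of Proposition~\ref{prop:j} writes $\ell^{q+1}$, which does not reach all $q+1$ points of the fibre.

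The separability step does not work, although it is also not needed. Your claim is that a common root of $\pi$ and $\pi'$ would impose an algebraic relation on $\theta$, which transcendence forbids. This is a non sequitur. The coefficients of $\pi$ involve $m,\alpha,\beta^\vee$, which are algebraic over $\KK$, and $\theta,\eta$ are themselves algebraically dependent. So a common root only says that the discriminant of $\pi$, an element of $\mathbb{H}=\KK(\alpha)$, vanishes, and transcendence of $\theta$ says nothing about whether that element is zero. The paper gets separability geometrically instead, in Lemma~\ref{lem:multi} and Proposition~\ref{prop:nomultiple}. There a multiple root forces $\dot V=\ddot V$ with $\mathbb{F}_q$-rational $x$-coordinate, which contradicts $V\notin E(\mathbb{F}_q)$. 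For the present statement you can drop the step entirely. On $\{Y\neq0\}$ the curve $Y^{q+1}=\pi(X)$ is automatically smooth, because $\partial_Y\bigl(Y^{q+1}-\pi(X)\bigr)=(q+1)Y^q=Y^q\neq0$. Singular points can only lie on $Y=0$, which the moduli domain excludes.

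Recovering $g_i,h_j$ from $C_0,C_1,C_2$, or checking the relations of Lemma~\ref{lem:PLR}, only shows that $\phi\mapsto(\kappa,\lambda_1)$ is injective. It does not show that the coefficient formulas accompanying Theorem~\ref{thm:Drinfeld} give commuting $\phi_x,\phi_y$ at every point with $Y\neq0$. That existence half is exactly what you, like the paper, take from Theorem~\ref{thm:Drinfeld}. Proving it independently would need one of two things:
\begin{enumerate}
\item a direct check of $\phi_x\phi_y=\phi_y\phi_x$ together with the Weierstrass relation; or
\item building the motive $\A_L E_1\oplus V^{-1}\A_L E_2$ with the $\tau$-matrix and using that \eqref{eq:con_C1}, \eqref{eq:con_C2} make it $\tau$-stable.
\end{enumerate}
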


\subsection{Ramification Points of the Superelliptic Curve}
In this subsection, we study the geometric property of the moduli curve $  C_{\pi} : Y^{q+1} = \pi(X) $.
\begin{lem}\label{lem:lambda1eq0}
    Let $ (\kappa, \lambda_1)$ be a point of $ C_\pi $.
    The condition $ \lambda_1 = 0 $ is equivalent to that 
    the line of $ V^\vee $, $ \dot{V} $, $ \ddot{V} $ has slope $ \kappa $, and 
    the $x$-coordinate of $ \dot{V} $ and $ \ddot{V} $ are $ \gamma_1 $ and $ \gamma_1^{\frac{1}{q}}$.
\end{lem}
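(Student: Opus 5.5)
The plan is to go through the moduli relation $\lambda_1^{q+1}=\pi(\kappa)$, which is condition \eqref{eq:con_C1}, and the stability identities of Lemma~\ref{lem:stable}, specialised to $\lambda_1=0$. Since $(\kappa,\lambda_1)\in C_\pi$, we have $\lambda_1=0$ if and only if $\pi(\kappa)=0$, that is, $\uk\gamma_1=\vk$. The slope statement is built into the construction: for any $\kappa$, the line $\mathfrak L$ of slope $\kappa$ through $V^\vee$ meets $E$ in two further points. These are $\dot V,\ddot V$, and their $x$-coordinates are the roots of $C_2=x^2-\chi_1x+\chi_2$ by Corollary~\ref{cor:C0C1C2}(1). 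So the real content is the identification of the roots of $C_2$, equivalently of $C_2^{(1)}$, whose roots are $\dot\alpha^q,\ddot\alpha^q$.

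\emph{Forward direction.} Put $c:=\lambda_1^q\lambda_2+\gamma_1^q$. By \eqref{eq:con_C2} and Lemma~\ref{lem:C1C2A2}, $c=-\chi_1^q-\gamma_1=\muk-\alpha^q$, which stays finite even when $\lambda_1=0$. First use the identity $\Gamma_1=C_2^{(1)}$ from Lemma~\ref{lem:stable}(1). Setting $\lambda_1=0$ kills the term $\lambda_1^{q+1}\gamma_2$, so
\[
C_2^{(1)}=(x+\gamma_1)(x+c).
\]
Second, use the interpolation polynomial $P(x)=\uk C_2^{(1)}(x)$ from Lemma~\ref{lem:stable}(2). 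At $\lambda_1=0$ it factors as
\[
P(x)=(x+c)(\uk x+\vk)=\uk (x+c)\bigl(x+\vk/\uk\bigr),
\]
and $\vk/\uk=\gamma_1$ precisely because $\pi(\kappa)=0$. The two factorizations agree, and both show that $-\gamma_1$ is a root of $C_2^{(1)}$. Hence one of $\dot V,\ddot V$, say $\dot V$, has $x$-coordinate $(-\gamma_1)^{1/q}$.

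For the second point I would use the wedge-product identity $(x+\gamma_1)T_2+T_1/h=\lambda_1^{q+1}f_\alpha$. At $\lambda_1=0$ it reads
\[
T_1=-h\,(x+\gamma_1)T_2,\qquad\text{so}\qquad C_1'^{(1)}=h\,(x+\gamma_1)\,T_2 .
\]
I would compare divisors on both sides. The zeros of $C_1'^{(1)}$ are $V^{(1)},\dot V^{(1)\vee},\ddot V^{(1)\vee}$, the function $h$ has poles at $\dot V,\ddot V$, and $T_2$ is given by \eqref{eq:G_2}. Matching these should force one of $\dot V,\ddot V$ to be the point whose $x$-coordinate is read off from $x+\gamma_1$ without a Frobenius twist. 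This gives the pair ``$\gamma_1$ and $\gamma_1^{1/q}$''.

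\emph{Converse.} If $\mathfrak L$ has slope $\kappa$ and $C_2^{(1)}$ has the root $-\gamma_1$, evaluate $P$ at $x=-\gamma_1$. Since $C_2^{(1)}(-\gamma_1)=0$, this gives
\[
\lambda_1^{q+1}(-\gamma_1-\alpha)+(c-\gamma_1)(\vk-\uk\gamma_1)=0 .
\]
By \eqref{eq:con_C1}, $\vk-\uk\gamma_1=-\lambda_1^{q+1}$, so this becomes
\[
\lambda_1^{q+1}(-\alpha-c)=\lambda_1^{q+1}\muk^{\,\prime},\qquad \muk^{\,\prime}:=-\muk+\alpha^q-\alpha=-\alpha-c ,
\]
and it must vanish. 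One then needs $\muk\neq\alpha^q-\alpha$, i.e.\ $\uk\gamma_2\neq 0$, to conclude $\lambda_1=0$. The degenerate case $\uk=0$, where $\kappa\in\mathbb F_q$, has to be handled separately, and there $\pi(\kappa)=\beta^\vee-(\beta^\vee)^q\neq0$ directly. The case $\gamma_2=0$ needs a similar check.

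\emph{Main obstacle.} I expect the hardest step to be pinning down the second root and reconciling the signs in ``$\gamma_1$'' versus $-\gamma_1$ as they come out of $x+\gamma_1$. This depends on the sign normalisation of $f=-f_\alpha$ and on how $x+\gamma_1$ enters $E_2$. I would resolve it by evaluating the divisor identity above at $\dot V^{(1)}$ and $\ddot V^{(1)}$.
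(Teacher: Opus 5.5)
Your first step is correct, and it is more careful than the paper's proof. The paper substitutes $\lambda_1=0$ into $\Gamma_1$ as though $\lambda_1^q\lambda_2$ vanished, and reads off $C_2^{(1)}=(x+\gamma_1)(x+\gamma_1^q)$. But by \eqref{eq:con_C2}, $\lambda_1^q\lambda_2=-\chi_1^q-\gamma_1^q-\gamma_1$ need not vanish, so what one actually gets is your $C_2^{(1)}=(x+\gamma_1)(x+c)$ with $c=-\chi_1^q-\gamma_1=\uk\gamma_2-\alpha$. (The correct coordinates are $-\gamma_1^{1/q}$ and $-\gamma_1$, as the paper's own proof finds. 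The missing signs in the statement are a typo and have nothing to do with $f=-f_\alpha$.)

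The gap is the second root, i.e.\ showing $c=\gamma_1^q$. The wedge-product identity cannot give it. With $T_1,T_2$ as in Proposition~\ref{prop:Admissible_Basis},
\[
(x+\gamma_1)T_2+\frac{T_1}{h}=\lambda_1^{q+1}h^{(1)}\Bigl(x+\gamma_1+\frac{\gamma_2}{h}\Bigr)=\lambda_1^{q+1}f_\alpha
\]
holds identically, for every value of $c$. So at $\lambda_1=0$ your relation $C_1'^{(1)}=h(x+\gamma_1)T_2$ only restates the factorization you already have. Content enters only through \eqref{eq:G_2}, and the divisor matching you describe as ``should force'' is never carried out. The direct route is the identity $\mathfrak L-\mathfrak L^{(1)}=\uk x+\vk$. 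Since $\dot V^{(1)}$ lies on $\mathfrak L^{(1)}$ and has $x$-coordinate $-\gamma_1$, you get $\mathfrak L(\dot V^{(1)})=\vk-\uk\gamma_1=-\pi(\kappa)=0$. Hence $\dot V^{(1)}\in\{V^\vee,\dot V,\ddot V\}$, and $\ddot V=\dot V^{(1)}$ (so $\ddot\alpha=-\gamma_1$) follows only after excluding $\dot V^{(1)}=V^\vee$ and $\dot V^{(1)}=\dot V$.

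The exclusion $\dot V^{(1)}=\dot V$ is not a technicality. The second-root assertion is false at some points of $C_\pi$ with $\lambda_1=0$, so your forward direction cannot be completed without an extra hypothesis.

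\begin{itemize}
\item Let $Q\in E(\mathbb F_q)$ be affine, and let $\kappa_Q$ be the slope of the line through $V^\vee$ and $Q$, so $\dot V=Q=\dot V^{(1)}$.
\item Then $Q$ lies on both $\mathfrak L$ and $\mathfrak L^{(1)}$, so $\vk=-\uk x_Q$ and $\pi(\kappa_Q)=\uk(\gamma_1+x_Q)$.
\item Combining $-\chi_1^q=\gamma_1+\uk\gamma_2-\alpha$ (from the proof of Lemma~\ref{lem:C1C2A2}) with Lemma~\ref{lem:chi2qg2g1} gives the unconditional identity $C_2^{(1)}(-\gamma_1)=-\gamma_2\pi(\kappa)$.
\item Compare this with $C_2^{(1)}=(x-x_Q)(x-\ddot\alpha^q)$ and the sum of its roots. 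This yields $(\gamma_1+x_Q)(\alpha-x_Q)=0$, hence $\gamma_1=-x_Q$ and $\pi(\kappa_Q)=0$.
\item Yet $\ddot V=V\dotplus Q^\vee$ cannot have $x$-coordinate $x_Q=-\gamma_1=-\gamma_1^{1/q}$, since that would force $V=2Q\in E(\mathbb F_q)$ or $V=\infty$.
\end{itemize}

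Generically, the $2q+1$ roots of $\pi$ are the $2q+2-\#E(\mathbb F_q)$ slopes with $\ddot V=\dot V^{(1)}$, where the claim holds, together with these $\#E(\mathbb F_q)-1$ slopes $\kappa_Q$. The paper's one-line proof hides exactly this through its substitution $\lambda_1^q\lambda_2=0$.

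For the converse, which the paper does not address at all, your evaluation of $P$ at $-\gamma_1$ works when $\uk\gamma_2\neq0$. Your patch for $\uk=0$, however, misstates $\pi(\kappa)$: it equals $-\vk=\beta^\vee-(\beta^\vee)^q+\kappa(\alpha^q-\alpha)$. You also assert its non-vanishing without proof. The identity $C_2^{(1)}(-\gamma_1)=-\gamma_2\pi(\kappa)$ gives the converse directly whenever $\gamma_2\neq0$, with no condition on $\uk$.
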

\begin{proof}
If we set $ \lambda_1 = 0 $, then Lemma \ref{lem:stable} together with Conditions \eqref{eq:con_A1}\eqref{eq:con_A2} yields
\[
    \Gamma_1 = C_2^{(1)} =   x^2 + (  \gamma_1^{q} + \gamma_1 ) x +( \gamma_1^{q} ) \gamma_1  ,   
\]
has two solutions $ x = -\gamma_1 $ or $ x = -\gamma_1^{q} $. Therefore, the two solution of $ C_2   $ is  $ x= -\gamma_1 $ or $ x = -\gamma_1^{\frac{1}{q}} $. 
Using the geometry property of $ C_2 $ established in Corollary~\ref{cor:C0C1C2}, we know the $x$-axis value of $ \dot{V} $ (resp. $ \dot{V} $) is given by ether $  -\gamma_1^{\frac{1}{q}} $ or $ -\gamma_1 $.
\end{proof}
 
 We present the following proposition, which clarifies the geometric nature of the polynomial \(\pi(X)\) and its relation to the degeneracy \(\dot V=\ddot V\).  

\begin{lem}\label{lem:multi}
Assume $ \pi(\kappa) = 0 $. With the same notation and under the general position assumption, the following equivalence holds:
\[
\kappa \text{ is a multiple root of } \pi(X) \quad\Longleftrightarrow\quad \dot V=\ddot V   .
\]
\end{lem}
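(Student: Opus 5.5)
The plan is to compute $\pi'(X)$ directly from $\pi(X)=(\uk\gamma_1-\vk)|_{\kappa=X}$, and then use the vanishing $\lambda_1=0$ to express the derivative in terms of the two roots of $C_2^{(1)}$. Since $\pi(\kappa)=0$ and $(\kappa,\lambda_1)$ lies on $C_\pi$, we have $\lambda_1^{q+1}=0$, so $\lambda_1=0$. Lemma~\ref{lem:lambda1eq0}, together with \eqref{eq:con_A1} and \eqref{eq:con_A2}, then gives
\[
C_2^{(1)}=(x+\gamma_1)(x+\gamma_1^q),\qquad -\chi_1^q=\gamma_1+\gamma_1^q .
\]
Thus the $x$-coordinates $\dot\alpha^q,\ddot\alpha^q$ of $\dot V^{(1)},\ddot V^{(1)}$ are $-\gamma_1$ and $-\gamma_1^q$.

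\textbf{Step 1: the derivative.} Differentiate formally in $\kappa$, with $\alpha,\beta,m,\theta,\eta$ fixed. In characteristic $p$ we have $\frac{d}{d\kappa}\kappa^q=0$. Hence $\frac{d}{d\kappa}\uk=-1$, $\frac{d}{d\kappa}\gamma_1=\gamma_2$ (as recorded in the Notation), and $\frac{d}{d\kappa}\vk=\alpha$. This gives
\[
\pi'(\kappa)=-\gamma_1+\uk\gamma_2-\alpha=\muk-\alpha^q-\gamma_1 .
\]
The first part of the proof of Lemma~\ref{lem:C1C2A2} uses only Lemma~\ref{lem:m2}, not \eqref{eq:con_C2}. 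It shows $\muk=\alpha^q-\chi_1^q-\gamma_1$, and therefore
\[
\pi'(\kappa)=-\chi_1^q-2\gamma_1 .
\]

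\textbf{Step 2: substitute $\lambda_1=0$.} Using $-\chi_1^q=\gamma_1+\gamma_1^q$, we get $\pi'(\kappa)=\gamma_1^q-\gamma_1$. This is exactly the difference of the two roots $-\gamma_1$ and $-\gamma_1^q$ of $C_2^{(1)}$. Hence $\pi'(\kappa)=0$ if and only if $C_2^{(1)}$ has a double root. Applying the inverse Frobenius twist, this holds if and only if $C_2=(x-\dot\alpha)(x-\ddot\alpha)$ has a double root, i.e.\ $\dot\alpha=\ddot\alpha$. This formulation avoids any discriminant, so characteristic $2$ causes no trouble.

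\textbf{Step 3: from $\dot\alpha=\ddot\alpha$ to $\dot V=\ddot V$.} This is the only geometric step. If $\dot\alpha=\ddot\alpha$, then either $\ddot V=\dot V$ or $\ddot V=\dot V^\vee$. In the second case, $\dot V\dotplus\ddot V=\infty$, contradicting $\dot V\dotplus\ddot V=V$ from Lemma~\ref{lem:motiverelation}(2), since $V\neq\infty$. The converse, $\dot V=\ddot V\Rightarrow\dot\alpha=\ddot\alpha$, is trivial.

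The main obstacle I expect is justifying the formal differentiation. One must check that $\alpha,\beta,m$ do not depend on $\kappa$ (they are fixed by $V$), and that the "general position" hypothesis excludes degenerate configurations. Examples are a vertical line $\mathfrak L$, or $\dot V$ coinciding with $V^\vee$, for which $C_2$ and the identifications of Corollary~\ref{cor:C0C1C2} break down. I would state these exclusions explicitly as the meaning of general position.
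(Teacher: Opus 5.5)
Your route is essentially the paper's: differentiate $\pi=\uk\gamma_1-\vk$, set $\lambda_1=0$, and use Lemma~\ref{lem:lambda1eq0} to identify the roots of $C_2^{(1)}$. Step 1 is correct and unconditional: $\pi'(\kappa)=-\chi_1^q-2\gamma_1$ uses only Lemma~\ref{lem:m2}. Step 3 is also fine.

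The gap is in Step 2. There you substitute $-\chi_1^q=\gamma_1+\gamma_1^q$, which is \eqref{eq:con_A1} with the term $\lambda_1^q\lambda_2$ dropped because $\lambda_1=0$. The paper's proof makes the same move when it writes $\muk=\gamma_1^q+\alpha^q$. At $\lambda_1=0$ there is no Drinfeld module and no $\lambda_2$. Condition \eqref{eq:con_C2} only fixes the product $\lambda_1^q\lambda_2=-\chi_1^q-\gamma_1^q-\gamma_1$, and nothing forces this to vanish at a root of $\pi$.

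It really does fail at some roots. Let $P=(p,p')$ be an affine point of $E(\mathbb{F}_q)$, and let $\kappa$ be the slope of the line through $V^\vee$ and $P$. Then $\dot V=P=\dot V^{(1)}$ lies on both $\mathfrak{L}$ and $\mathfrak{L}^{(1)}$. Since $\mathfrak{L}-\mathfrak{L}^{(1)}=\uk x+\vk$, this gives $\uk p+\vk=0$. Now use the identity
\[
C_2^{(1)}(x)=(x+\gamma_1)(x-\alpha)+\gamma_2(\uk x+\vk),
\]
which is Lemma~\ref{lem:chi2qg2g1} together with the $\lambda$-free half of Lemma~\ref{lem:C1C2A2}. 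Evaluating it at $x=p$ gives $\gamma_1=-p$, hence $\pi(\kappa)=-(\uk p+\vk)=0$. At this root $\gamma_1\in\mathbb{F}_q$, so your formula would give $\pi'(\kappa)=\gamma_1^q-\gamma_1=0$ and $\dot\alpha=\ddot\alpha$. In fact $\ddot V=V\dotplus P^\vee$ has $x$-coordinate $\ddot\alpha\neq p$, because $V\neq 2P$ and $V\neq\infty$. Consequently $-\chi_1^q-\gamma_1-\gamma_1^q=(p-\ddot\alpha)^q\neq0$. Such roots occur whenever $E$ has an affine $\mathbb{F}_q$-point, for instance $(0,0)$ on $y^2+y=x^3$ over $\mathbb{F}_2$. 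So Lemma~\ref{lem:lambda1eq0} cannot be invoked here, and your Step 2 is unjustified.

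The repair uses only what you already have. At a root of $\pi$ we have $\vk=\uk\gamma_1$. Combining this with Lemma~\ref{lem:chi2qg2g1} and $\uk\gamma_2-\alpha=-\chi_1^q-\gamma_1$ gives $\chi_2^q=\gamma_1(\uk\gamma_2-\alpha)=-\gamma_1(\chi_1^q+\gamma_1)$. Therefore
\[
C_2^{(1)}=(x+\gamma_1)\bigl(x-\chi_1^q-\gamma_1\bigr).
\]
The two roots of this quadratic differ by exactly $-\chi_1^q-2\gamma_1=\pi'(\kappa)$, your Step 1 value. Hence $\pi'(\kappa)=0$ if and only if $C_2^{(1)}$ has a double root. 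Untwisting, this holds if and only if $C_2$ has a double root, i.e.\ $\dot\alpha=\ddot\alpha$, and your Step 3 then finishes. This argument covers every root of $\pi$ and never uses $\lambda_1$, $\lambda_2$, \eqref{eq:con_A1}, \eqref{eq:con_A2} or Lemma~\ref{lem:lambda1eq0}. The second root of $C_2^{(1)}$ equals $-\gamma_1^q$ only when $-\chi_1^q-\gamma_1^q-\gamma_1$ happens to vanish.
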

\begin{proof}
    A direct expansion gives
\[
\pi(X)=\mathbf u\,\gamma_1-\mathbf v. \label{eq:pi}
\]
Differentiating with respect to \(X\) (using that \(q\) vanishes as a scalar in characteristic \(p\)) yields
\begin{equation}
\pi'(X)=-\gamma_1+\mathbf u\,\gamma_2-\alpha. \label{eq:pi'}
\end{equation}
Assume first that \(\kappa\) is a multiple root. Then \(\pi(\kappa)=0\), so by Theorem 4.1 we have \(\lambda_1=0\). Moreover, \(\pi'(\kappa)=0\), hence from \eqref{eq:pi'} at \(X=\kappa\),
\begin{equation}
\gamma_1=\mathbf u\,\gamma_2-\alpha. \label{eq:gamma1kappa}
\end{equation}
Recall that in Lemma \ref{lem:C1C2A2} that 
\begin{equation}\label{eq:recallw}
\mathbf w:=\mathbf u\,\gamma_2+\alpha^q-\alpha.
\end{equation}
Lemma \ref{lem:C1C2A2} gives
\[
\mathbf w=\lambda_1^q\lambda_2+\gamma_1^q+\alpha^q.
\]
With \(\lambda_1=0\), this becomes
\begin{equation}
\mathbf w=\gamma_1^q+\alpha^q. \label{eq:w1}
\end{equation}
On the other hand, substituting \eqref{eq:gamma1kappa} into \eqref{eq:recallw} yields
\begin{equation}
\mathbf w=\gamma_1+\alpha^q. \label{eq:w2}
\end{equation}
Comparing \eqref{eq:w1} and \eqref{eq:w2} yields \(\gamma_1=\gamma_1^q\), so \(\gamma_1\in\mathbb F_q\). Lemma \ref{lem:lambda1eq0} then gives
\[
\dot\alpha=-\gamma_1,\qquad \ddot\alpha=-\gamma_1^{1/q}=-\gamma_1,
\]
hence \(\dot\alpha=\ddot\alpha\). Since both points lie on \(E\) and have the same \(x\)-coordinate, and \(\dot V \dotplus \ddot V=V\neq\infty\), they cannot be negatives of each other; therefore \(\dot V=\ddot V\).

Conversely, suppose \(\dot V=\ddot V\).  Since \(\pi(\kappa)=0\), we have \(\lambda_1=0\).
Then Lemma \ref{lem:lambda1eq0} together with \(\dot\alpha=\ddot\alpha\) implies \(-\gamma_1=-\gamma_1^{1/q}\), and hence \(\gamma_1\in\mathbb F_q\). Then  Lemma \ref{lem:C1C2A2} gives
\[
\mathbf w=\gamma_1^q+\alpha^q=\gamma_1+\alpha^q,
\]
so \(\mathbf u\,\gamma_2-\alpha=\gamma_1\). Substituting this into \eqref{eq:pi'} yields \(\pi'(\kappa)=0\). Thus, \(\kappa\) is a multiple root of \(\pi\). 
\end{proof}

\begin{prop}\label{prop:nomultiple}
    The polynomial \(\pi(X)\) has no multiple roots in \(L \).
\end{prop}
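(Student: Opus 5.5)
We argue by contradiction, following the route prepared by Lemma~\ref{lem:multi}. Suppose $\kappa\in L$ is a multiple root of $\pi(X)$, so $\pi(\kappa)=\pi'(\kappa)=0$. By the derivative formula \eqref{eq:pi'} this gives $\gamma_1=\uk\gamma_2-\alpha$ at $X=\kappa$. As in the proof of Lemma~\ref{lem:multi}, comparing the two expressions for $\muk$ from Lemma~\ref{lem:C1C2A2} (with $\lambda_1=0$) forces $\gamma_1^q=\gamma_1$, i.e.\ $\gamma_1\in\mathbb F_q$. Lemma~\ref{lem:lambda1eq0} then gives $\dot V=\ddot V$, with common $x$-coordinate $-\gamma_1\in\mathbb F_q$.

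The goal is to show that this degenerate configuration is incompatible with the arithmetic of $V$. Since $\dot V=\ddot V$, the line $\mathfrak L$ is tangent to $E$ at $\dot V$ and passes through $V^\vee$; equivalently $V=2\dot V$ in $E(L)$. The $x$-coordinate $\dot\alpha=-\gamma_1$ lies in $\mathbb F_q$, so $\dot V\in E(\mathbb F_{q^2})$ (its $y$-coordinate solves a quadratic over $\mathbb F_q$). Hence $V=2\dot V\in E(\overline{\mathbb F}_q)$, and in particular $\alpha$ is algebraic over $\mathbb F_q$. This contradicts the defining property $V\dotplus (V^{(1)})^\vee=\xi$: if $V$ were defined over a finite field, then so would be $V^{(1)}$, and therefore also $\xi=(\theta,\eta)$. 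But $\theta$ is transcendental over $\mathbb F_q$. So no multiple root exists.

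The main obstacle is making the first step rigorous. Lemma~\ref{lem:multi} is stated under a "general position assumption," and it uses $\lambda_1=0$ at a root of $\pi$; the latter follows from \eqref{eq:con_C1} only once we know the identities of Lemma~\ref{lem:C1C2A2} remain valid at such $\kappa$. I would first re-derive the needed identity $\muk=\gamma_1^q+\alpha^q$ directly from the definitions of $\uk,\vk,\gamma_1,\gamma_2$ and Lemma~\ref{lem:m2}, without invoking the stability conditions.

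If that proves awkward, the fallback is purely algebraic and avoids Lemma~\ref{lem:multi} altogether. From $\pi'(\kappa)=0$ we can solve $\gamma_1=\uk\gamma_2-\alpha$, and substituting into $\pi(\kappa)=\uk\gamma_1-\vk=0$ gives a polynomial identity in $\kappa$ with coefficients in $\mathbb F_q(\alpha,\alpha^q,m,\theta,\beta^\vee)$. Eliminating $\kappa$ from these two equations (via a resultant) should force an algebraic relation over $\mathbb F_q$ satisfied by $\theta$. That relation contradicts the transcendence of $\theta$, since $\alpha$, $m$ and $\beta^\vee$ are all algebraic over $\KK$ and generate the Hilbert class field $\mathbb H$.
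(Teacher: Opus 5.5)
Your outline follows the paper's own route. Lemmas \ref{lem:multi} and \ref{lem:lambda1eq0} are used to get $\dot V=\ddot V=:T$ with $x(T)=-\gamma_1\in\mathbb{F}_q$, and then $V=2T$ cannot be defined over a finite field. Your finish via $T\in E(\mathbb{F}_{q^2})$ and the transcendence of $\theta$ is a tidy replacement for the paper's case split $T^{(1)}=T$ or $T^{(1)}=T^\vee$.

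The gap is the first step. You are right to distrust it, but neither of your remedies closes it.

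\emph{The imported step.} The identity $\muk=\lambda_1^q\lambda_2+\gamma_1^q+\alpha^q$ is just \eqref{eq:con_C2}, the equation that defines $\lambda_2$ when $\lambda_1\neq0$. At a root of $\pi$ there is no Drinfeld module, and nothing forces $\lambda_1^q\lambda_2=0$. Lemma \ref{lem:lambda1eq0} silently makes the same assumption.

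\emph{Your proposed repair is circular.} By the unconditional identity $\uk\gamma_2+\chi_1^q+\gamma_1=\alpha$ (proof of Lemma \ref{lem:C1C2A2}), the target $\muk=\gamma_1^q+\alpha^q$ is equivalent to $\gamma_1^q+\gamma_1+\chi_1^q=0$. Once $\pi'(\kappa)=0$, i.e.\ $\gamma_1=\uk\gamma_2-\alpha$, it says exactly $\gamma_1^q=\gamma_1$, which is the claim to be proved.

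\emph{The resultant fallback is not an argument.} The discriminant of $\pi$ is an element of $\mathbb{H}$. The transcendence of $\theta$ over $\mathbb{F}_q$ says nothing about whether a given element built from $\theta,\alpha,m,\beta^\vee$ vanishes, since the last three are algebraic over $\KK$. Showing it is nonzero is the whole content of the proposition.

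\emph{What does work.} Lemma \ref{lem:chi2qg2g1} and the identity above hold for every $\kappa$, and they give
\[
C_2^{(1)}(-\gamma_1)=\gamma_1^2+\chi_1^q\gamma_1+\chi_2^q=-\gamma_2\,\pi(\kappa),\qquad \pi'(\kappa)=-(2\gamma_1+\chi_1^q).
\]
So at a multiple root $C_2^{(1)}=(x+\gamma_1)^2$. Hence $\dot\alpha^q=\ddot\alpha^q=-\gamma_1$ and $\dot V=\ddot V=T$. This alone does not put $\dot\alpha$ in $\mathbb{F}_q$; the missing input is geometric.

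With $\gamma_1,\gamma_2$ as defined, the computation in part (3) of the proof of Corollary \ref{cor:C0C1C2} shows $(x+\gamma_1)+\gamma_2/h=f_\alpha/h^{(1)}$ for every $\kappa$. Multiplying by $C_1'=hC_2$ gives
\[
(x+\gamma_1)C_1'+\gamma_2C_2=C_1'f_\alpha/h^{(1)}.
\]
The right side vanishes at $T^{(1)}=(\dot\alpha^q,\dot\beta^q)$, which is a double zero of $1/h^{(1)}$, while $f_\alpha$ has at most a simple pole there. On the left, the first term vanishes at $T^{(1)}$ because $\dot\alpha^q=-\gamma_1$. Since $C_2=(x-\dot\alpha)^2$, what remains is $\gamma_2\,(\dot\alpha^q-\dot\alpha)^2=0$.

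Hence $\dot\alpha\in\mathbb{F}_q$ whenever $\gamma_2(\kappa)\neq0$. If $\gamma_2(\kappa)=0$, then $\gamma_1=-\theta$ and $\pi'(\kappa)=\theta-\alpha$, so that case requires $\alpha=\theta$ and must be excluded separately. With $\dot\alpha\in\mathbb{F}_q$ in hand, your endgame finishes the proof.
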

\begin{proof}
    Note that \(V\) is not an \(\mathbb F_q\)-rational point on \(E\) (i.e. \(V\notin E(\mathbb F_q)\)) and thus $ \alpha \not \in \mathbb{F}_q $.

Suppose, for contradiction, that \(\kappa\) is a multiple root of \(\pi\). Then, by Lemma \ref{lem:multi}, we have
\[
\dot V=\ddot V=:T\in E(L),
\]
and hence
\[
2T=V \quad\text{in } E(L). \tag{1}
\]
In particular, \(T\) is a \(2\)-division point of \(V\).

Since \(\kappa\) is a multiple root, we get
\[
\lambda_1^{q+1}=\pi(\kappa)=0,
\]
so \(\lambda_1=0\). Let \(T=(\dot{x},\dot{y})\). By Lemma \ref{lem:lambda1eq0}, the condition \(\lambda_1=0\) is equivalent to
\[
\gamma_1 = \gamma_1^{1/q} \in\mathbb F_q \qquad\text{and}\qquad \dot{x}=-\gamma_1. \tag{2}
\]
Thus \(\dot{x} \in\mathbb F_q\).

The Frobenius conjugate of \(T\) has the form $ T^{(1)} = (\dot{x}^q, \dot{y}^q) = (\dot{x}, \dot{y}^q) $. On an elliptic curve, two points with the same \(x\)-coordinate are either equal or negatives of each other. Hence
\[
T^{(1)}=T \quad\text{or}\quad T^{(1)}= T^\vee. 
\]

We now rule out both possibilities.

\begin{itemize}
\item If \(T^{(1)}=T\), then \(T\in E(\mathbb F_q)\). From (1) it follows that \(V=2T\in E(\mathbb F_q)\), contradicting the assumption that \(V\) is not \(\mathbb F_q\)-rational.

\item If \(T^{(1)}= T^\vee \), then applying Frobenius to (1) gives
\[
V^{(1)}=2T^{(1)}=2 T^\vee = V^\vee.
\] 
In the $x$-coordinates, we obtain 
\[
\alpha^q = \alpha 
\]
which yields $ \alpha \in \mathbb{F}_q$ which yields a contradiction.
\end{itemize}

Both cases lead to contradictions. Therefore, our initial assumption was false, and \(\pi(X)\) has no multiple roots.
\end{proof}

Notice that \(\pi(X)\) has degree \(2q+1\). Since  
\[
\gcd(q+1,\,2q+1)=1,
\]
the curve  
\[
C_\pi : Y^{\,q+1}=\pi(X)
\]
is an irreducible with a single point at infinity. This affine curve is smooth since $ \pi(X) $ has no multiple roots according to Proposition~\ref{prop:nomultiple}.
Applying the Hurwitz genus formula to the cover \(C_\pi \to \mathbb{P}^1\) (where the degree of the cover is \(q+1\) and the ramification over the finite roots of \(\pi\) is total), we get

\[
g \;=\; \frac{(q+1-1)(2q+1-1)}{2}
      \;=\; \frac{q \cdot 2q}{2}
      \;=\; q^2 .
\]
 
\section{Examples}

In this section we illustrate the main results of the paper with explicit computations over small finite fields. We first treat the case \(\mathbb{F}_2\) in detail, then present a complementary example over \(\mathbb{F}_3\), and finally give a complete list of the moduli polynomials for all isomorphism classes of elliptic curves over \(\mathbb{F}_2\).

\subsection{An explicit example over \(\mathbb{F}_2\)}

Let \(E\) be the elliptic curve over \(\mathbb{F}_2\) given by the Weierstrass equation
\[
y^2 + y = x^3 .
\]
We consider Drinfeld modules over the coordinate ring \(\mathbf{A} := \mathbb{F}_2[x,y]\). Set \(\mathbb{K} = \mathbb{F}_2(\theta,\eta)\) and let \(L\) be the algebraic closure of \(\mathbb{K}\). The curve \(E\) has three \(\mathbb{F}_2\)-rational points: \((0,0)\), \((0,1)\), and \(\infty\).

With the notation established in the preceding sections, let \(V=(\alpha,\beta)\) be the unique point satisfying
\[
V \dotplus V^{(1)\vee} = \xi = (\theta,\eta).
\]
For this particular curve, a direct computation shows that \(\alpha\) and \(\beta\) satisfy
\begin{align*}
 0&= \alpha^3 + (\theta + 1)\alpha^2 + 1, \\
\beta &= (\theta + 1)\alpha^2 + (\theta^2 + 1)\alpha + \eta + \theta .
\end{align*}

We also record the following useful identities. A direct verification gives
\[
V \dotplus (0,0) = \left(\frac{\beta}{\alpha^2}, \frac{\beta}{\alpha^3}\right), \qquad
V \dotplus (0,1) = \left(\frac{\beta+1}{\alpha^2}, \frac{\beta^2+1}{\alpha^3}\right),
\]
and the quantities \(\frac{\beta}{\alpha^2}\) and \(\frac{\beta+1}{\alpha^2}\) both satisfy the cubic equation
\[
x^3 + (\theta + 1)x^2 + 1 = 0.
\]
Moreover, the Galois group of \( \mathbb{H} = \mathbb{F}_2(\alpha,\theta,\eta)\) over \( \KK  \) is generated by
\[
\alpha \mapsto \frac{\beta}{\alpha^2}
= (\theta^2 + 1)\alpha^2 + (\eta + \theta^3 + \theta^2 + 1)\alpha
  + (\theta + 1)\eta + \theta^2 + 1 .
\]

\subsubsection{A special rank-two Drinfeld module in characteristic \(2\)}

We now introduce a special construction of a rank-two Drinfeld module in characteristic \(2\). Let \(\tau\) denote the \(2\)-power Frobenius twist and set
\[
\iota' : x \mapsto \sqrt{\theta}, \qquad y \mapsto \sqrt{\eta}.
\]
By Corollary~\ref{lem:coeofrank1}, we obtain the rank-one Drinfeld \(\mathbf{A}\)-module
\begin{equation*}
\begin{cases}
\psi_x &= \sqrt{\theta} + (1+\sqrt{\alpha})(1+\sqrt{\theta})\tau + \tau^2, \\[2pt]
\psi_y &= \sqrt{\eta} + (1+\sqrt{\alpha})\theta\,\tau
        + \bigl(\theta + (1+\sqrt{\alpha})(1+\alpha)\sqrt{\theta}\bigr)\tau^2 + \tau^3 .
\end{cases}
\end{equation*}
It corresponds to the shtuka function
\[
f_{\sqrt{\alpha}}
= \frac{y - \sqrt{\eta} - m_{\sqrt{\alpha}}(x - \sqrt{\theta})}{x - \sqrt{\alpha}},
\qquad
m_{\sqrt{\alpha}} = \frac{\sqrt{\eta} - \beta}{\sqrt{\theta} - \alpha}
= \alpha + \sqrt{\alpha\theta} + 1,
\]
whose divisor is
\[
(f_{\sqrt{\alpha}}) = \xi^{(-1)} + V - V^{(-1)} - \infty .
\]

Now set
\[
\iota : x \mapsto \theta, \qquad y \mapsto \eta .
\]
A special method for constructing the rank-two Drinfeld \(\mathbf{A}\)-module \(\phi\) is given by the composition
\[
\phi_a = \psi_a \circ \psi_a \qquad \text{for } a \in \mathbf{A}.
\]
Explicitly, we have
\begin{equation*}
\begin{cases}
\phi_x = \psi_x^2 = \tau^4 + g_3\tau^3 + g_2\tau^2 + g_1\tau + \theta, \\[2pt]
\phi_y = \psi_y^2 = \tau^6 + h_5\tau^5 + h_4\tau^4 + h_3\tau^3 + h_2\tau^2 + h_1\tau + \eta.
\end{cases}
\end{equation*}
The coefficients \(g_i\) and \(h_i\) are given directly as follows:
\begin{align*}
g_1 &= \sqrt{\theta}(\theta+1)(\sqrt{\alpha}+1),\\
g_2 &= \alpha(\theta^2 + \theta\sqrt{\theta} + \theta + \sqrt{\theta})
      + \sqrt{\alpha}(\theta\sqrt{\theta} + \theta + \sqrt{\theta} + 1)
      + \theta^2 + \sqrt{\theta},\\
g_3 &= \alpha(\theta^3+\theta^2+\theta+1)
      + \sqrt{\alpha}(\theta^2 + \sqrt{\theta})
      + \theta^2\sqrt{\theta} + 1 , \\
h_1 &= \theta^{2}\sqrt{\theta}\sqrt{\alpha} + \theta^{2}\sqrt{\theta}, \\[2pt]
h_2 &= \alpha (\theta^{4} + \theta^{3}\sqrt{\theta} + \theta^{2}\sqrt{\theta})
      + \sqrt{\alpha} (\theta^{3}\sqrt{\theta} + \theta^{3} + \theta^{2})
      + \theta^{4} + \theta^{2}\sqrt{\theta}, \\[2pt]
h_3 &= \alpha (\theta^{7} + \theta^{5}\sqrt{\theta} + \theta^{4}\sqrt{\theta}
      + \theta^{3}\sqrt{\theta} + \theta^{3} + \theta^{2}\sqrt{\theta}) \\
    &\quad + \sqrt{\alpha} (\theta^{6} + \theta^{5} + \theta^{3}\sqrt{\theta} + \theta^{3}) \\
    &\quad + \theta^{6}\sqrt{\theta} + \theta^{5}\sqrt{\theta} + \theta^{4}\sqrt{\theta}
      + \theta^{4} + \theta^{3}\sqrt{\theta} + \theta^{2} + \theta\sqrt{\theta}, \\[2pt]
h_4 &= \alpha (\theta^{11} + \theta^{10} + \theta^{7}\sqrt{\theta} + \theta^{5}\sqrt{\theta}
      + \theta^{5} + \theta^{4}\sqrt{\theta} + \theta^{3}\sqrt{\theta} + \theta^{3}) \\
    &\quad + \sqrt{\alpha} (\theta^{10} + \theta^{7} + \theta^{6}\sqrt{\theta} + \theta^{6}
      + \theta^{5}\sqrt{\theta} + \theta^{4}\sqrt{\theta} + \theta^{4} + \theta^{3}
      + \theta^{2}\sqrt{\theta} + \theta) \\
    &\quad + \theta^{10}\sqrt{\theta} + \theta^{10} + \theta^{9} + \theta^{7}\sqrt{\theta}
      + \theta^{7} + \theta^{4}\sqrt{\theta} + \theta^{4} + \theta^{3}\sqrt{\theta}
      + \theta^{3} + \theta^{2}\sqrt{\theta} + \theta, \\[2pt]
h_5 &= \alpha (\theta^{15} + \theta^{14} + \theta^{13} + \theta^{11} + \theta^{10}
      + \theta^{8} + \theta^{7} + \theta^{6} + \theta^{5} + \theta) \\
    &\quad + \sqrt{\alpha} (\theta^{14} + \theta^{12} + \theta^{10} + \theta^{6}
      + \theta^{4} + \sqrt{\theta}) \\
    &\quad + \theta^{14}\sqrt{\theta} + \theta^{14} + \theta^{13} + \theta^{12}\sqrt{\theta}
      + \theta^{10}\sqrt{\theta} + \theta^{9} + \theta^{8} + \theta^{6}\sqrt{\theta}
      + \theta^{6} + \theta^{5} + \theta^{4}\sqrt{\theta} + \theta .
\end{align*}

\subsubsection{The motive relation and auxiliary quantities}

Applying the computation in Proposition~\ref{prop:motive2} yields the explicit motive relation
\[
C_2 \tau^2 \mathbf{s} = C_1 \tau \mathbf{s} + C_0 \mathbf{s},
\]
where
\begin{align*}
C_2 &= x^2 + \sqrt{\alpha},\\
C_1 &= (\sqrt{\theta}+1)(1+\sqrt{\alpha}) x^2 + \sqrt{\alpha} x + y + \theta\sqrt{\alpha} + \sqrt{\alpha\theta} + \sqrt{\theta} + \sqrt{\eta},\\
C_0 &= \sqrt{\theta} x^2 + x y  + \big((\alpha+1)\sqrt{\theta} + \theta\sqrt{\alpha} + \sqrt{\eta} + 1\big) x \\
    &\quad + \big(\alpha + \sqrt{\alpha\theta} + 1\big) y + \big((\alpha+1)\sqrt{\eta} + \sqrt{\alpha\theta\eta} + \theta\big).
\end{align*}

On the other hand, the important quantities and functions provided by Corollary~\ref{cor:C0C1C2} are:
\begin{align*}
\kappa   &= \sqrt{\alpha},          & \gamma_1 &= \alpha + \sqrt{\alpha\theta} + 1,\\
m        &= \alpha^2 + \theta\alpha + 1, & \gamma_2 &= \sqrt{\theta} + \sqrt{\alpha},\\
\lambda_1 &= 1,                      & \chi_1   &= 0,\\
\lambda_2 &= (\sqrt{\theta}+1)(1+\sqrt{\alpha}), & \chi_2   &= \sqrt{\alpha}.
\end{align*}
and
\begin{align*}
f_{\alpha} &= \frac{y-\eta - (\alpha^2 + \theta \alpha + 1)(x-\theta)}{x - \alpha},\\
h(x,y) &= \frac{x-\alpha}{Y + \sqrt{\alpha}(x-\alpha)
         + ((\theta+1)\alpha^2 + (\theta^2+1)\alpha + \eta + \theta + 1)},\\
C_1' &= y + \sqrt{\alpha}\,x + (\alpha+1)\sqrt{\theta}
      + (\theta+1)\sqrt{\alpha} + \sqrt{\eta} + \alpha.
\end{align*}

With \(\kappa = \sqrt{\alpha}\) and \(\lambda_1 = 1\) determined, one can obtain the complete Drinfeld module \(\phi\) rapidly from List~\ref{form:coeffs}.

\subsection{An example over \(\mathbb{F}_3\)}

We now turn to an example over the field \(\mathbb{F}_3\). Let
\[
E: y^2 + xy + y = x^3 + x^2 + x + 1
\]
over \(\mathbb{F}_3\), and define \(\mathbf{A}\), \(\mathbb{K}\), \(L\), and \(\tau\) analogously. This curve has only two rational points: \((2,0)\) and \(\infty\). It is chosen because all coefficients \(a_i\) in its Weierstrass equation are non-zero, thereby providing the most generic check of the formulas listed in List~\ref{form:coeffs}.

Using the relation \(V \dotplus V^{(1) \vee} = \xi\), we obtain the annihilation polynomial over \(\mathbb{K}\):
\[
\alpha^2 + 2\theta\alpha + 2\theta + 1 = 0,
\]
whose other root is \(2\alpha + \theta\).

The slope \(m\) of the line \(\mathfrak{L}_m\) is given by
\[
m = \left( \frac{\theta}{\theta^{2} + \theta + 2} \eta + \frac{2\theta^{2} + \theta + 2}{\theta^{3} + \theta + 1} \right) \alpha
+ \frac{2\theta + 1}{\theta^{2} + \theta + 2} \eta
+ \frac{2\theta}{\theta^{2} + \theta + 2}.
\]

Recall that \(\kappa\) is the slope of the line \(\mathfrak{L}\) defined in \eqref{eq:DefL} and is determined by \(\dot{V}\) and \(\ddot{V}\) with \(V\) fixed. By Lemma~\ref{lem:motiverelation}, the relation \(\dot{V} \dotplus \ddot{V} = V\) ensures that \(\dot{V}\) can be regarded as a free variable that determines \(\kappa\), and hence the Drinfeld module itself can be parametrized by \(\dot{V}\).

We choose \(\dot{\alpha} = 2\alpha\), so that \(\dot{V} = (2\alpha, \dot{\beta})\). The corresponding \(\kappa\) is
\[
\kappa = \left( \frac{1}{\theta + 2} \alpha + \frac{2\theta}{\theta + 2} \right) \dot{\beta}
+ \left( \frac{\theta + 1}{\theta^{3} + \theta + 1} \eta + \frac{\theta^{2}}{\theta^{3} + \theta + 1} \right) \alpha
+ \frac{1}{\theta^{3} + \theta + 1} \eta
+ \frac{\theta^{2} + 1}{\theta^{3} + \theta + 1}.
\]
Since \(\kappa\) is the \(j\)-invariant, this determines a specific isomorphism class of Drinfeld modules.

\subsection{The complete collection of \(\pi(X)\) for elliptic curves over \(\mathbb{F}_2\)}

For completeness, we now present the moduli polynomials for all five isomorphism classes of elliptic curves over \(\mathbb{F}_2\). These are:
\begin{align*}
E_1 &: y^2 + xy = x^3 + 1,   && \#E_1(\mathbb{F}_2) = 4 \quad \text{(ordinary)},\\
E_2 &: y^2 + y = x^3 + x + 1, && \#E_2(\mathbb{F}_2) = 1,\\
E_3 &: y^2 + y = x^3,        && \#E_3(\mathbb{F}_2) = 3,\\
E_4 &: y^2 + y = x^3 + 1,    && \#E_4(\mathbb{F}_2) = 3,\\
E_5 &: y^2 + y = x^3 + x,    && \#E_5(\mathbb{F}_2) = 5.
\end{align*}

For each curve, the relation \(V \dotplus V^{(1)\vee} = \xi\) yields the following annihilating polynomials for \(\alpha\):
\begin{align*}
E_1 &: \alpha^{4} + \theta \alpha^{3} + \alpha^{2} + \theta \alpha + 1 = 0, \\
E_2 &: \alpha + \theta + 1 = 0, \\
E_3 &: \alpha^{3} + (\theta + 1) \alpha^{2} + 1 = 0, \\
E_4 &: \alpha^{3} + (\theta + 1) \alpha^{2} + \alpha + \theta = 0, \\
E_5 &: \alpha^{5} + (\theta + 1) \alpha^{4} + \alpha^{3} + (\theta + 1) \alpha^{2} + 1 = 0.
\end{align*}

A direct computation then gives the corresponding moduli polynomials \(\pi(X)\):

\begin{align*}
\pi_{E_1}(X)
&= X^5 + \left( \frac{1}{\theta}\alpha^3 + \alpha^2 + \alpha + \frac{\eta}{\theta} + 1 \right) X^4 + X^3 \\
&\quad + \left( \frac{1}{\theta}\alpha^3 + \alpha^2 + \frac{\eta}{\theta} + 1 \right) X^2 + \alpha^2 X \\
&\quad + \frac{\theta+1}{\theta}\alpha^3 + \left( \frac{\eta}{\theta} + \frac{\theta+1}{\theta} \right)\alpha^2
  + \left( \frac{\eta}{\theta} + \frac{\theta+1}{\theta} \right)\alpha + \frac{1}{\theta}, \\[4pt]
\pi_{E_2}(X)
&= X^{5} + (\theta + 1) X^{4} + X^{2} + (\theta^{2} + 1) X + \theta^{3} + \theta^{2} + 1, \\[4pt]
\pi_{E_3}(X)
&= X^{5} + (\alpha^{2} + \theta \alpha) X^{4} + (\alpha^{2} + (\theta + 1) \alpha + 1) X^{2} + \alpha^{2} X + (\theta + 1) \alpha^{2} + 1, \\[4pt]
\pi_{E_4}(X)
&= X^{5} + (\alpha^{2} + (\theta + 1)\alpha + \theta + 1) X^{4} + (\alpha^{2} + \theta\alpha + \theta) X^{2} + \alpha^{2} X + (\theta + 1)\alpha^{2} + \alpha + \theta + 1, \\[4pt]
\pi_{E_5}(X)
&= X^{5} + (\alpha^{3} + \theta \alpha^{2} + \theta \alpha) X^{4} + (\alpha^{3} + \theta \alpha^{2} + (\theta + 1) \alpha + 1) X^{2} + \alpha^{2} X + \alpha^{3} + \alpha.
\end{align*}

These polynomials illustrate the explicit dependence of the moduli space on the geometry of the underlying elliptic curve.

\appendix
\section{Proof of the Motive Relation}

In this appendix we prove Proposition~\ref{prop:motive2}, which establishes the existence of the quadratic relation among the motive generators. The strategy is to eliminate the \(\tau^3\)-term by a Euclidean-division-style manipulation in the twisted polynomial ring.

\begin{proof}
For convenience, we introduce the following auxiliary quantities:
\[
\begin{aligned}
    P_0 &:= h_4 - g_2^{q^2} - (h_5 - g_3^{q^2}) g_3^q, \\
    P_1 &:= h_1 - P_0 g_1, \\
    P_2 &:= h_2 - (h_5 - g_3^{q^2}) g_1^q - P_0 g_2, \\
    P_3 &:= h_3 - g_1^{q^2} - (h_5 - g_3^{q^2}) g_2^q - P_0 g_3.
\end{aligned}
\]

A direct computation from the definitions of \(\phi_x\) and \(\phi_y\) yields
\begin{align*}
    &(\phi_y - \eta) - \bigl( \tau^2 + (h_5 - g_3^{q^2}) \tau + P_0 \bigr)(\phi_x - \theta) \\
    &= P_1 \tau + P_2 \tau^2 + P_3 \tau^3.
\end{align*}
Applying this operator to \(\s\) and using the defining relations \(\phi_x \s = x\s\) and \(\phi_y \s = y\s\), we obtain the first fundamental identity
\begin{equation}
    P_3 \tau^3 \s + \bigl(P_2 + (x - \theta^{q^2})\bigr) \tau^2 \s
    + \bigl(P_1 + (h_5 - g_3^{q^2})(x - \theta^q)\bigr) \tau \s
    + \bigl(P_0 (x - \theta) - (y - \eta)\bigr) \s = 0. \label{eq:first}
\end{equation}

Our next goal is to produce a second independent relation that will allow us to eliminate the \(\tau^3\)-term. We begin with the auxiliary identity
\[
    P_3^q (\phi_x - \theta) - \tau (P_1 \tau + P_2 \tau^2 + P_3 \tau^3)
    = P_3^q g_1 \tau + (P_3^q g_2 - P_1^q) \tau^2 + (P_3^q g_3 - P_2^q) \tau^3.
\]
Multiplying this by \(P_3\) and subtracting \((P_3^q g_3 - P_2^q)(P_1 \tau + P_2 \tau^2 + P_3 \tau^3)\) yields
\[
    P_3^{q+1}(\phi_x - \theta) - \bigl(P_3 \tau + (P_3^q g_3 - P_2^q)\bigr)(P_1 \tau + P_2 \tau^2 + P_3 \tau^3)
    = R_1 \tau + R_2 \tau^2,
\]
where
\[
    R_1 := P_3^{q+1} g_1 - (P_3^q g_3 - P_2^q) P_1, \qquad
    R_2 := P_3 (P_3^q g_2 - P_1^q) - (P_3^q g_3 - P_2^q) P_2.
\]

Substituting the expression for \(P_1 \tau + P_2 \tau^2 + P_3 \tau^3\) from the first identity, we obtain
\begin{align*}
    P_3^{q+1}(\phi_x - \theta) &- \bigl(P_3 \tau + (P_3^q g_3 - P_2^q)\bigr)
    \left((\phi_y - \eta) - \bigl(\tau^2 + (h_5 - g_3^{q^2}) \tau + P_0\bigr)(\phi_x - \theta)\right) \\
    & = R_1 \tau + R_2 \tau^2.
\end{align*}
Applying this to \(\s\) and rearranging terms gives the second fundamental identity
\begin{align}
    P_3 (x - \theta^{q^3}) \tau^3 \s
    &+ \bigl(L_2 (x - \theta^{q^2}) - R_2\bigr) \tau^2 \s
    + \bigl(L_1 (x - \theta^{q}) - P_3 (y - \eta^q) - R_1\bigr) \tau \s \nonumber \\
    & + \bigl(L_0 (x - \theta) - (P_3^q g_3 - P_2^q)(y - \eta)\bigr) \s = 0, \label{eq:P3}
\end{align}
where we have set
\[
\begin{aligned}
    L_0 &:= (P_3^q g_3 - P_2^q) P_0 + P_3^{q+1}, \\
    L_1 &:= P_3 P_0^q + (P_3^q g_3 - P_2^q)(h_5 - g_3^{q^2}), \\
    L_2 &:= P_3 (h_5 - g_3^{q^2})^q + (P_3^q g_3 - P_2^q).
\end{aligned}
\]

Now multiply \eqref{eq:first} by \((x - \theta^{q^3})\) and subtract \eqref{eq:P3}. The \(\tau^3 \s\)-terms cancel, and we obtain the desired relation
\[
    C_2 \tau^2 \s = C_1 \tau \s + C_0 \s,
\]
with
\[
\begin{aligned}
    C_2 &= (x - \theta^{q^3})\bigl(P_2 + (x - \theta^{q^2})\bigr) - \bigl(L_2 (x - \theta^{q^2}) - R_2\bigr) \nonumber \\
        &= x^2 + (P_2 - \theta^{q^2} - \theta^{q^3} - L_2)x + \theta^{q^3}\theta^{q^2} - P_2\theta^{q^3} + L_2\theta^{q^2} + R_2, 
        \\[2mm]
    C_1 &= -\Bigl[(x - \theta^{q^3})\bigl(P_1 + (h_5 - g_3^{q^2})(x - \theta^q)\bigr) - \bigl(L_1 (x - \theta^{q}) - P_3 (y - \eta^q) - R_1\bigr)\Bigr] \nonumber \\
        &= -(h_5 - g_3^{q^2})x^2 + \bigl((h_5 - g_3^{q^2})(\theta^q + \theta^{q^3}) - P_1 + L_1\bigr)x - P_3 y \nonumber \\
        &\quad - (h_5 - g_3^{q^2})\theta^{q^3}\theta^q + P_1\theta^{q^3} - L_1\theta^{q} + P_3\eta^q - R_1, 
        \\[2mm]
    C_0 &= -\Bigl[(x - \theta^{q^3})\bigl(P_0 (x - \theta) - (y - \eta)\bigr) - \bigl(L_0 (x - \theta) - (P_3^q g_3 - P_2^q)(y - \eta)\bigr)\Bigr] \nonumber \\
        &= -P_0 x^2 + xy - \bigl(\eta - P_0(\theta^{q^3} + \theta) - L_0\bigr)x - \bigl((P_3^q g_3 - P_2^q) + \theta^{q^3}\bigr)y \nonumber \\
        &\quad - P_0\theta^{q^3}\theta + \theta^{q^3}\eta - L_0\theta + (P_3^q g_3 - P_2^q)\eta. 
\end{aligned}
\]
The asserted containment of \(C_0, C_1, C_2\) in the respective \(L\)-vector spaces is immediate from these explicit expressions.
\end{proof}

\section{Computation of the Drinfeld Module Coefficients}

In this appendix we derive the explicit formulas for the coefficients \(g_i\) and \(h_i\) of the rank-two Drinfeld module, as stated in Theorem~\ref{thm:Drinfeld}. The main idea is to compare the two expressions for \(C_0,C_1,C_2\) obtained from the motive relation and from the geometric data of the elliptic curve.

We first recall the geometric expressions from Corollary~\ref{cor:C0C1C2}:
\begin{align*}
    C_2 & = x^2 - \chi_1x + \chi_2 \\
    & = x^2 - (\kappa^2 + a_1 \kappa - a_2 -\alpha) x + (a_4 - 2 \kappa \beta^{\vee} + \kappa^2 \alpha - a_1 \beta^{\vee} - a_3 \kappa + a_2 \alpha + \alpha^2), \\[2pt]
    C_1 & = \lambda_1 C_1' + \lambda_2 C_2 \\
    & = \lambda_2 x^2 + \lambda_1 y + (\lambda_1 (\kappa + a_1) - \lambda_2 \chi_1) x+ \lambda_2 \chi_2 + \lambda_1 (-\beta - (\kappa + a_1) \alpha), \\[2pt]
    C_0 & = (x + \gamma_1) C_1' + \gamma_2 C_2 \\
    & = xy + \gamma_1 y + (\kappa + a_1 + \gamma_2) x^2 + ((\kappa + a_1) (\gamma_1 + \alpha) - \beta - \gamma_2 \chi_1) x - \gamma_1 ( (\kappa + a_1) \alpha + \beta) + \gamma_2 \chi_2.  
\end{align*}

On the other hand, from the proof of Proposition~\ref{prop:motive2}, the same coefficients are expressed in terms of \(P_i, L_i, R_i\) as follows:
\begin{align}
    C_2 
    &= x^2 + (P_2 - \theta^{q^2} - \theta^{q^3} - L_2)x + \theta^{q^3}\theta^{q^2} - P_2\theta^{q^3} + L_2\theta^{q^2} + R_2, \label{eq:C2formula} \\
    C_1 
    &= -(h_5 - g_3^{q^2})x^2 + \bigl((h_5 - g_3^{q^2})(\theta^q + \theta^{q^3}) - P_1 + L_1\bigr)x - P_3 y \nonumber \\
    &\quad - (h_5 - g_3^{q^2})\theta^{q^3}\theta^q + P_1\theta^{q^3} - L_1\theta^{q} + P_3\eta^q - R_1, \label{eq:C1formula} \\
    C_0 
    &= -P_0 x^2 + xy - \bigl(\eta - P_0(\theta^{q^3} + \theta) - L_0\bigr)x - \bigl((P_3^q g_3 - P_2^q) + \theta^{q^3}\bigr)y \nonumber \\
    &\quad - P_0\theta^{q^3}\theta + \theta^{q^3}\eta - L_0\theta + (P_3^q g_3 - P_2^q)\eta. \label{eq:C0formula}
\end{align}

Matching the coefficients of like monomials in these two sets of expressions will determine all the \(g_i\) and \(h_i\). 

To simplify the notation, we introduce \(w := P_3^q g_3 - P_2^q\). From the coefficient comparisons, we immediately obtain the following useful identities:
\begin{align}
    \lambda_2 & = - h_5 + g_3^{q^2}, \label{eq:lam2h5g3} \\
    \lambda_1 & = -P_3, \label{eq:lam1P3} \\
    \gamma_1 & = - w - \theta^{q^3}, \label{eq:gam1wth} \\
    \kappa + a_1 + \gamma_2 & = - P_0. \nonumber
\end{align}
Since \(\gamma_2 = -(\kappa^q + m + a_1)\), the last identity simplifies to
\begin{equation}
    \label{eq:P0km}
    \kappa - \kappa^q - m  = - P_0 .
\end{equation}

We now record the explicit expressions for the auxiliary quantities, whose derivation is given below.

\begin{lem}\label{lem:PLR}
The quantities \(P_i\), \(L_i\), and \(R_i\) admit the following explicit expressions in the base field \(\mathbb{L}\):
\begin{align*}
    P_0 & = \kappa + a_1 + \gamma_2 , \\
    P_1 &= - \lambda_1 (\kappa^{q^2} + m^q - \kappa^q + \kappa + a_1 ) - \lambda_2 (\theta^q -\gamma_1  - \chi_1), \\
    P_2 &= \lambda_1 \lambda_2^q -\gamma_1 - \chi_1 + \theta^{q^2} , \\
    P_3 & = - \lambda_1 , \\
    L_1 & = -\lambda_1 (\kappa^{q^2} + m^q - \kappa^q ) + (\gamma_1 + \theta^{q^3}) \lambda_2 , \\
    L_2 & = \lambda_1 \lambda_2^q - \gamma_1 - \theta^{q^3} ,\\
    R_1 & = \lambda_2 (-\chi_2 +\theta^{q^3} \theta^q  )  - \lambda_1 (-\beta - (\kappa + a_1) \alpha + \eta^q)   \\
    & \quad - \bigl( \lambda_1 (\kappa^{q^2} + m^q - \kappa^q + \kappa + a_1 ) + \lambda_2 ( \theta^q  - \gamma_1 -\chi_1) \bigr) \theta^{q^3} \\
    & \quad + \bigl( \lambda_1 (\kappa^{q^2} + m^q - \kappa^q ) - \lambda_2 (\gamma_1 + \theta^{q^3}) \bigr) \theta^{q}  , \\
    R_2 & =  (\lambda_1 \lambda_2^q - \gamma_1  ) ( \theta^{q^3} - \theta^{q^2}) - \chi_1 \theta^{q^3} + \chi_2 + \theta^{q^3} \theta^{q^2}.
\end{align*}
where the right-hand sides are built solely over \(L\).
\end{lem}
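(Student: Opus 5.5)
The plan is to compare, monomial by monomial, the two expressions for \(C_0,C_1,C_2\): the geometric ones from Corollary~\ref{cor:C0C1C2}, and the ones \eqref{eq:C2formula}--\eqref{eq:C0formula} coming from the elimination in Appendix~A. Both sides are written in the basis \(1,x,y,x^2,xy\) of \(\mathcal{L}(5\infty)\), which is linearly independent over \(L\). So the motive relation, normalized by \(\Sgn^L(C_2)=1\) and \(\Sgn^L(C_0)=1\), forces each coefficient to agree. This normalization is consistent, because the \(x^2\)-coefficient of \eqref{eq:C2formula} and the \(xy\)-coefficient of \eqref{eq:C0formula} are already \(1\).

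I would extract the quantities in a definite order, each step using only those already found.

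\textbf{Step 1.} Comparing the \(y\)-coefficient of \(C_1\) gives \(P_3=-\lambda_1\). The \(x^2\)-coefficient of \(C_1\) gives \(h_5-g_3^{q^2}=-\lambda_2\). The \(x^2\)-coefficient of \(C_0\) gives \(P_0=-(\kappa+a_1+\gamma_2)\); I would reconcile its sign with the stated formula through \eqref{eq:P0km}, checking the sign convention there carefully. The \(y\)-coefficient of \(C_0\) gives \(w=-\gamma_1-\theta^{q^3}\).

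\textbf{Step 2.} Use these in the definitions \(L_2=P_3(h_5-g_3^{q^2})^q+w\), \(L_1=P_3P_0^q+w(h_5-g_3^{q^2})\) and \(L_0=wP_0+P_3^{q+1}\). This immediately yields \(L_2=\lambda_1\lambda_2^q-\gamma_1-\theta^{q^3}\). For \(L_1\) one gets \(-\lambda_1P_0^q+(\gamma_1+\theta^{q^3})\lambda_2\); here I would rewrite \(P_0^q\) using \(\gamma_2^q=-\kappa^{q^2}-m^q-a_1\), which produces \(\kappa^{q^2}+m^q-\kappa^q\) up to the \(a_1\) bookkeeping.

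\textbf{Step 3.} The \(x\)-coefficient of \(C_2\) gives \(P_2-\theta^{q^2}-\theta^{q^3}-L_2=-\chi_1\), hence \(P_2=\lambda_1\lambda_2^q-\gamma_1-\chi_1+\theta^{q^2}\). The \(x\)-coefficient of \(C_1\) reads \(-\lambda_2(\theta^q+\theta^{q^3})-P_1+L_1=\lambda_1(\kappa+a_1)-\lambda_2\chi_1\), and solving for \(P_1\) gives the stated formula. The constant terms of \(C_2\) and \(C_1\) are linear in \(R_2\) and \(R_1\) respectively, so solving them gives \(R_2\) and \(R_1\). Here I substitute the already-found \(P_1,P_2,L_1,L_2\) and expand only enough to match the displayed shapes.

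\textbf{Main obstacle.} The hard part is not the extraction but \emph{consistency}. The remaining comparisons — the \(x\)-coefficient and constant term of \(C_0\), which involve \(L_0\) and \(\eta\) — together with the defining identities of \(R_1,R_2\) in terms of \(g_i,P_i\), must hold automatically. They would follow from the stability conditions \eqref{eq:con_C1}, \eqref{eq:con_C2}, Lemma~\ref{lem:chi2qg2g1} and Lemma~\ref{lem:m2}. For the lemma itself one only needs the values read off, since both expressions describe the same relation and their difference is an element of the span of \(1,x,y,x^2,xy\) that must vanish. The genuine difficulty is keeping the twists \(\gamma_2^q\) and \(m^q\), and the \(a_1\)-terms, straight in \(L_1\) and \(P_1\). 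I would double-check these against the \(\mathbb F_2\) example (where \(\lambda_1=1\)) as a sanity test.
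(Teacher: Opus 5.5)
Your proposal is correct and follows the paper's own argument: it reads off $P_3=-\lambda_1$, $h_5-g_3^{q^2}=-\lambda_2$, $w=-\gamma_1-\theta^{q^3}$ and $P_0$ from the $y$- and $x^2$-coefficients, then gets $L_2,P_2$ from the $x$-coefficient of $C_2$, $L_1,P_1$ from the $x$-coefficient of $C_1$, and $R_2,R_1$ from the constant terms. There is nothing to reconcile on the sign of $P_0$: your value $P_0=-(\kappa+a_1+\gamma_2)=\kappa^q+m-\kappa$ is the correct one, since it agrees with \eqref{eq:P0km} and with the paper's later use of $P_0$ for $h_1$ and $h_4$, and it is exactly what makes $P_0^q=\kappa^{q^2}+m^q-\kappa^q$ produce the stated $L_1$; so the displayed line $P_0=\kappa+a_1+\gamma_2$ in the statement is a sign typo, harmless only in characteristic $2$.
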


Before proceeding to the proof, we collect the original definitions of the auxiliary quantities in Table~\ref{table:quantities} for the reader's reference.

\begin{table}[ht] 
\centering
\caption{Definitions of auxiliary quantities.}
\label{table:quantities}
\[
\begin{array}{|c|l|}
\hline
\text{Variable} & \text{Definition} \\ \hline
P_0 & h_4 - g_2^{q^2} - (h_5 - g_3^{q^2}) g_3^q \\[2pt]
P_1 & h_1 - P_0 g_1 \\[2pt]
P_2 & h_2 - (h_5 - g_3^{q^2}) g_1^q - P_0 g_2 \\[2pt]
P_3 & h_3 - g_1^{q^2} - (h_5 - g_3^{q^2}) g_2^q - P_0 g_3 \\ \hline
R_1 & P_3^{q+1} g_1 - (P_3^q g_3 - P_2^q) P_1 \\[2pt]
R_2 & P_3 (P_3^q g_2 - P_1^q) - (P_3^q g_3 - P_2^q) P_2 \\ \hline
L_0 & (P_3^q g_3 - P_2^q) P_0 + P_3^{q+1} \\[2pt]
L_1 & P_3 P_0^q + (P_3^q g_3 - P_2^q)(h_5 - g_3^{q^2}) \\[2pt]
L_2 & P_3 (h_5 - g_3^{q^2})^q + (P_3^q g_3 - P_2^q) \\ \hline
\end{array}
\]
\end{table}

\begin{proof}[Proof of Lemma~\ref{lem:PLR}]
We derive the expressions one by one. First, from the definition of \(L_2\) and identities \eqref{eq:lam2h5g3}, \eqref{eq:lam1P3}, \eqref{eq:gam1wth}, we have
\[
L_2 = P_3 (h_5 - g_3^{q^2})^q + (P_3^q g_3 - P_2^q) = \lambda_1 \lambda_2^q + w = \lambda_1 \lambda_2^q - \gamma_1 - \theta^{q^3}.
\]
Comparing the coefficients of \(x\) in the two expressions for \(C_2\) gives
\[
P_2 - \theta^{q^2} -\theta^{q^3}  - L_2 = - \chi_1,
\]
hence
\[
P_2 = L_2 - \chi_1 + \theta^{q^2} +\theta^{q^3} = \lambda_1 \lambda_2^q -\gamma_1 - \chi_1 + \theta^{q^2}.
\]

Next, from the definition of \(L_1\) and identities \eqref{eq:lam2h5g3}, \eqref{eq:lam1P3}, \eqref{eq:P0km}, we obtain
\[
L_1 = P_3 P_0^q + (P_3^q g_3 - P_2^q) (h_5 - g_3^{q^2}) = -\lambda_1 (\kappa^{q^2} + m^q - \kappa^q ) + (\gamma_1 + \theta^{q^3}) \lambda_2 .
\]
Comparing the coefficients of \(x\) in the two expressions for \(C_1\) yields
\[
((h_5 - g_3^{q^2})(\theta^q + \theta^{q^3}) - P_1 + L_1) = \lambda_1 (\kappa + a_1) - \lambda_2 \chi_1,
\]
from which we deduce
\[
P_1 = - \lambda_1 (\kappa^{q^2} + m^q - \kappa^q + \kappa + a_1 ) - \lambda_2 (\theta^q -\gamma_1  - \chi_1).
\]

Finally, comparing the constant terms in \(C_1\) and \(C_2\) gives the expressions for \(R_1\) and \(R_2\):
\begin{align*}
R_1 &= - \lambda_2 \chi_2 - \lambda_1 (-\beta - (\kappa + a_1) \alpha) -  (- \lambda_2 \theta^{q^3} \theta^q  - P_1 \theta^{q^3} + L_1 \theta^{q}  + \lambda_1 \eta^q) \\
    &= \lambda_2 (-\chi_2 + \theta^{q^3} \theta^q ) - \lambda_1 (-\beta - (\kappa + a_1) \alpha + \eta^q)   \\
    &\quad - \bigl( \lambda_1 (\kappa^{q^2} + m^q - \kappa^q + \kappa + a_1 ) + \lambda_2 ( \theta^q  - \gamma_1 -\chi_1) \bigr) \theta^{q^3} \\
    &\quad + \bigl( \lambda_1 (\kappa^{q^2} + m^q - \kappa^q ) - \lambda_2 (\gamma_1 + \theta^{q^3}) \bigr) \theta^{q},
\end{align*}
and
\begin{align*}
R_2 & = \chi_2 - (\theta^{q^3} \theta^{q^2} - P_2 \theta^{q^3}  + L_2 \theta^{q^2} ) \\
    & = \chi_2 - \theta^{q^3} \theta^{q^2}  + ( \lambda_1 \lambda_2^q - \gamma_1 - \chi_1 + \theta^{q^2}   ) \theta^{q^3} - (\lambda_1 \lambda_2^q - \gamma_1 - \theta^{q^3}) \theta^{q^2} \\
    & =  (\lambda_1 \lambda_2^q - \gamma_1  ) ( \theta^{q^3} - \theta^{q^2}) - \chi_1 \theta^{q^3} + \chi_2 + \theta^{q^3} \theta^{q^2}.
\end{align*}
This completes the proof of the lemma.
\end{proof}

We are now in a position to prove Theorem~\ref{thm:Drinfeld}.

\begin{proof}[Proof of Theorem~\ref{thm:Drinfeld}]
We will make repeated use of the expressions for \(P_i\), \(R_i\) and \(L_i\) established in Lemma~\ref{lem:PLR}. 

\medskip
\noindent\textbf{Computation of \(g_1\).} 
From the definition of \(R_1\) and \eqref{eq:lam1P3}, we have
\[
(-\lambda_1)^{q+1}g_1 = R_1 + (P_3^q g_3 - P_2^q)P_1.
\]
Substituting the expressions for \(R_1\), \(P_1\) and \(P_3^q g_3 - P_2^q\) and simplifying, we obtain
\begin{align*}
(-\lambda_1)^{q+1}g_1
&= \lambda_2( - \chi_2  -  \gamma_1  (\gamma_1  + \chi_1) ) \\
&\quad + \lambda_1 (\beta + (\kappa + a_1) (\alpha + \gamma_1) - \eta^q   + (\kappa^{q^2} + m^q - \kappa^q ) (\theta^{q} + \gamma_1 )  ).
\end{align*}

\medskip
\noindent\textbf{Computation of \(g_2\).} 
Similarly, the definition of \(R_2\) yields
\begin{align*}
(-\lambda_1)^{q+1} g_2 
&= \lambda_1^{q+1} (\kappa^{q^2} + m^q - \kappa^q + \kappa + a_1 )^q   + \lambda_1 \lambda_2^q (-\gamma_1^q  - \chi_1^q - \gamma_1 )  + \chi_2  + \gamma_1  (\gamma_1 + \chi_1  ).
\end{align*}

\medskip
\noindent\textbf{Computation of \(g_3\).} 
From the definition of \(w\), we obtain
\begin{align*}
(-\lambda_1)^q g_3 
&= \lambda_1^q \lambda_2^{q^2} -\gamma_1^q - \gamma_1  - \chi_1^q. 
\end{align*}

\medskip
\noindent\textbf{Computation of the \(h_i\).} 
Having obtained the \(g_i\), we now turn to the coefficients \(h_i\). From the definition of \(P_1\), we have \(h_1 = P_1 + P_0 g_1\). Using \(P_0 = \kappa^q + m - \kappa\), this gives
\[
h_1 = (\kappa^q + m - \kappa ) g_1 - \lambda_1 (\kappa^{q^2} + m^q - \kappa^q + \kappa + a_1 ) - \lambda_2 (\theta^q -\gamma_1  - \chi_1).
\]

Next, comparing the coefficients of \(x^2\) in \(C_1\) yields
\[
h_5 = g_3^{q^2} - \lambda_2 .
\]
From the definition of \(P_0\), we have \(h_4 = P_0 + g_2^{q^2} - \lambda_2 g_3^q\), hence
\[
h_4 = (\kappa^q + m - \kappa ) + g_2^{q^2} - \lambda_2 g_3^q.
\]

The definition of \(P_2\) gives \(h_2 = P_2 - \lambda_2 g_1^q + P_0 g_2\), so
\[
h_2 = \lambda_1 \lambda_2^q  - \lambda_2 g_1^q + (\kappa^q + m - \kappa ) g_2 -\gamma_1 - \chi_1 + \theta^{q^2}.
\]

Finally, from the definition of \(P_3\), we have \(h_3 = P_3 + g_1^{q^2} - \lambda_2 g_2^q + P_0 g_3\), which gives
\[
h_3 = -\lambda_1 + g_1^{q^2} - \lambda_2 g_2^q + (\kappa^q + m - \kappa ) g_3.
\]

Therefore, all coefficients \(g_i\) and \(h_i\) are determined explicitly, as claimed.
\end{proof}

\section{Proof of Proposition~\ref{prop:G2-C21U2}}

In this appendix we prove Proposition~\ref{prop:G2-C21U2}, which establishes the identity expressing \(\Gamma_2\) in terms of the basis \(\Upsilon_1,\Upsilon_2\) of the relevant Riemann-Roch space.

\begin{proof}
Define the difference
\[
\Delta:=\Gamma_2-C_2^{(1)}\Upsilon_2 -\muk C_2^{(1)}\Upsilon_1.
\]
We prove that \(\Delta\equiv0\) by comparing coefficients in the three-term expression.

\medskip
\noindent\textbf{Step 1 – Expansion of \(\Gamma_2-C_2^{(1)}\Upsilon_2\).}
From \eqref{eq:DefGam2}, we have
\begin{align*}
\Gamma_2 & = (x - \alpha) \lambda_1^{q+1}   + (y - \beta^{\vee} - \kappa (x- \alpha)) ( x+\lambda_1^q   \lambda_2  + \gamma_1^q) \\
& = xy - \kappa x^2 + \bigl(\lambda_1^q \lambda_2 + \gamma_1^q \bigr) y +  \bigl(\lambda_1^{q+1} - \kappa (\lambda_1^q \lambda_2 + \gamma_1^q ) + \kappa \alpha - \beta^{\vee}  \bigr) x \\
& \quad + \bigl((\lambda_1^q \lambda_2 + \gamma_1^q) (\kappa \alpha - \beta^{\vee}) - \alpha \lambda_1^{q+1} \bigr).
\end{align*}
Expanding and using \eqref{eq:C21G2}, we obtain
\begin{align}
    \label{eq:G2-C21}
    \Gamma_{2}- C_{2}^{(1)} \Upsilon_2 & =  \left(\kappa^q - \kappa - \frac{\alpha-\alpha^q}{\gamma_2}\right) x^2 + (\alpha^q + \lambda_1^q \lambda_2 + \gamma_1^q ) y \notag \\
    & \quad + \left(\lambda_1^{q+1} - \kappa (\lambda_1^q \lambda_2 + \gamma_1^q ) + \kappa \alpha - \beta^{\vee}  - 2 \kappa^q \alpha^q + (\beta^{\vee})^q + \frac{(\alpha-\alpha^q)\chi_1^q}{\gamma_2 } \right) x \notag \\
    & \quad + (\lambda_1^q \lambda_2 + \gamma_1^q) (\kappa \alpha - \beta^{\vee}) - \alpha \lambda_1^{q+1} - \alpha^q ( \beta^{\vee})^q +  \kappa^q (\alpha^q)^2 - \frac{(\alpha-\alpha^q) \chi_2^q}{\gamma_2} .
\end{align}

\medskip
\noindent\textbf{Step 2 – Expansion of \(\muk C_2^{(1)}\Upsilon_1\).}
Using \eqref{eq:C21G1}, we have
\begin{align}
    \label{eq:u0C21G1}
    \muk  C_2^{(1)}\Upsilon_1  & =  \muk \bigl(  \frac{1}{\gamma_2} x^2 + y  -  \left(\kappa^q + \frac{\chi_1^q}{\gamma_2 }\right) x + \kappa^q \alpha^q - (\beta^{\vee})^q + \frac{\chi_2^q}{\gamma_2} \bigr) \notag \\
    & = \left(\kappa^q - \kappa - \frac{\alpha-\alpha^{q}}{\gamma_2} \right) x^2 +  \muk y  -  \muk  \left(\kappa^q + \frac{\chi_1^q}{\gamma_2} \right) x + \muk \left(\kappa^q \alpha^q - (\beta^{\vee})^q + \frac{\chi_2^q}{\gamma_2}\right),
\end{align}
where the coefficient of \(x^2\) follows from \eqref{eq:R0C111}.

\medskip
\noindent\textbf{Step 3 – Coefficient comparison in \(\Delta = \eqref{eq:G2-C21} - \eqref{eq:u0C21G1}\).}

We compare the coefficients of \(x^2\), \(y\), \(x\), and the constant term.

\medskip
\noindent\textit{Coefficient of \(x^2\):} The difference is identically zero.

\medskip
\noindent\textit{Coefficient of \(y\):} Using \eqref{eq:C2u0}, the difference is zero.

\medskip
\noindent\textit{Coefficient of \(x\):} 
From conditions \eqref{eq:con_C1}\eqref{eq:con_C2}, the coefficient of \(x\) in \eqref{eq:G2-C21} becomes
\[
\kappa^q \gamma_1 - \kappa^q  \alpha^q + \kappa \chi_1^q + \frac{(\alpha-\alpha^q)\chi_1^q}{\gamma_2 }.
\]
The coefficient of \(x\) in \eqref{eq:u0C21G1} is
\[
(\chi_1^q + \gamma_1 - \alpha^q )  \left(\kappa^q + \frac{\chi_1^q}{\gamma_2 }\right) = \kappa^q \gamma_1 - \kappa^q\alpha^q + \chi_1^q\left(\kappa^q - \frac{\alpha^q - \chi_1^q - \gamma_1}{\gamma_2}\right).
\]
Their difference simplifies to
\[
\frac{\chi_1^q}{\gamma_2} \big( (\kappa - \kappa^q ) \gamma_2 +(\alpha-\alpha^q) + (\alpha^q - \chi_1^q - \gamma_1) \bigr) = 0,
\]
where the last equality follows from \eqref{eq:C2u0}.

\medskip
\noindent\textit{Constant term:} 
Using conditions \eqref{eq:con_C1} and \eqref{eq:C2u0}, the constant term in \eqref{eq:G2-C21} yields
\[
(\uk \gamma_2 - \alpha) (\kappa \alpha - \beta^{\vee}) - \alpha (\uk \gamma_1 - \vk ) - \alpha^q ( \beta^{\vee})^q +  \kappa^q (\alpha^q)^2 - \frac{(\alpha-\alpha^q) \chi_2^q}{\gamma_2}.
\]
Since \(\muk = \uk \gamma_2 + \alpha^q - \alpha\), the constant term in \eqref{eq:u0C21G1} can be rewritten as
\[
(\uk \gamma_2  - \alpha) (\kappa^q \alpha^q - (\beta^{\vee})^q)  +  \alpha^q(\kappa^q \alpha^q - (\beta^{\vee})^q) +  \uk  \chi_2^q  - \frac{( \alpha - \alpha^q )  \chi_2^q}{\gamma_2}.
\]
Taking the difference and simplifying, we obtain
\[
\uk (\gamma_2 \vk - \alpha \gamma_1 - \chi_2^q) = 0,
\]
where the last equality comes from Lemma~\ref{lem:chi2qg2g1}.

All coefficients of \(\Delta\) are zero, hence \(\Delta\equiv0\). Therefore
\[
\Gamma_2 - C_2^{(1)}\Upsilon_2
= \muk  C_2^{(1)}\Upsilon_1.
\]
\end{proof} 

\bibliographystyle{amsplain}
\bibliography{paper}

\end{document}